\newtheorem{theorem}{Theorem}[section]
\newtheorem{definition}[theorem]{Definition}
\newtheorem{proposition}[theorem]{Proposition}
\newtheorem{lemma}[theorem]{Lemma}
\newtheorem{corollary}[theorem]{Corollary}
\newtheorem{problem}[theorem]{Problem}
\theoremstyle{definition}
\newtheorem{example}[theorem]{Example}
\def\N{\ensuremath{\mathbb{N}}}
\def\A{\ensuremath{\mathbf{A}}}
\def\B{\ensuremath{\mathbf{B}}}
\def\E{\ensuremath{\mathbf{E}}}
\def\Gbar{\ensuremath{\overline{G}}}
\def\T{\ensuremath{TTT}}
\def\Tug{\ensuremath{\mathrm{Tug}}}
\def\Ult{\ensuremath{\mathrm{Ult}}}
\def\<{\ensuremath{\langle}}
\def\>{\ensuremath{\rangle}}
\date{\dateline{Feb 28, 2008}{May 26, 2010}{Jun 10, 2010} \\
\small Mathematics Subject Classification: 91A46, 91B26, 91A60}
\begin{document}

\title{Discrete bidding games}

\author{Mike Develin\\
\small American Institute of Mathematics \\ [-0.8ex]
\small 360 Portage Ave., Palo Alto, CA 94306 \\
\small \texttt{develin@post.harvard.edu} \\
\and
Sam Payne\thanks{Supported by the Clay Mathematics Institute.}\\
\small Stanford University, Dept. of Mathematics \\ [-0.8ex]
\small 450 Serra Mall, Stanford, CA 94305 \\
\small \texttt{spayne@stanford.edu}
}

\begin{abstract}
We study variations on combinatorial games in which, instead of alternating moves, the players bid with discrete bidding chips for the right to determine who moves next.  We consider both symmetric and partisan games, and explore differences between discrete bidding games and \emph{Richman games}, which allow real-valued bidding.  Unlike Richman games, discrete bidding game variations of many familiar games, such as chess, Connect Four, and even Tic-Tac-Toe, are suitable for recreational play.  We also present an analysis of Tic-Tac-Toe for both discrete and real-valued bidding.
\end{abstract}

\maketitle

\tableofcontents

\section{Introduction}

Imagine playing your favorite two-player game, such as Tic-Tac-Toe, Connect Four, or chess, but instead of alternating moves you bid against your opponent for the right to decide who moves next.  For instance, you might play a game of bidding chess in which you and your opponent each start with one hundred bidding chips.  If you bid twelve for the first move, and your opponent bids ten, then you give twelve chips to your opponent and make the first move.  Now you have eighty-eight chips and your opponent has one hundred and twelve, and you bid for the second move...

\vspace{5pt}

Similar bidding games were studied by David Richman in the late 1980s.  In Richman's theory, as developed after Richman's death in \cite{LLPU, LLPSU}, a player may bid any nonnegative real number up to his current supply of bidding resources.  The player making the highest bid gives the amount of that bid to the other player and makes the next move in the game.  If the bids are tied, then a coin flip determines which player wins the bid.  The goal is always to make a winning move in the game; bidding resources have no value after the game ends.  The original Richman theory requires that the games be \emph{symmetric}, with all legal moves available to both players, to avoid the possibility of \emph{zugzwang}, positions where neither player wants to make the next move. The theory of these real-valued bidding games, now known as Richman games, is simple and elegant with surprising connections to random turn games.  The recreational games, such as chess, that motivated the work presented here, are partisan rather than symmetric, and it is sometimes desirable to force your opponent to move rather than to make a move yourself.  However, the basic results and arguments of Richman game theory go through unchanged for partisan games, in spite of the remarks in \cite[p.~260]{LLPSU}, provided that one allows the winner of the bid either to move or to force his opponent to move, at his pleasure.  For the remainder of the paper, we refer to these possibly partisan real-valued bidding games as \emph{Richman games}.

Say Alice and Bob are playing a Richman game, whose underlying combinatorial game is $G$.  Then there is a critical threshold $R(G)$, sometimes called the \emph{Richman value} of the game, such that Alice has a winning strategy if her proportion of the total bidding resources is greater than $R(G)$, and she does not have a winning strategy if her proportion of the bidding resources is less than $R(G)$.  If her proportion of the bidding resources is exactly $R(G)$, then the outcome may depend on coin flips.

The critical thresholds $R(G)$ have two key properties, as follows.  We say that $G$ is finite if there are only finitely many possible positions in the game, and we write $\Gbar$ be the game that is just like $G$ except that Alice and Bob have exchanged roles.  Let $P(G)$ be 
the probability that Alice can win $G$ at random-turn play, where the player who makes each move is determined by the toss of a fair coin, assuming optimal play.

\begin{enumerate}
\item If $G$ is finite, then $R(G)$ is rational and equal to $1-R(\Gbar)$.

\item For any $G$, $R(G)$ is equal to $1-P(G)$.
\end{enumerate}

\noindent The surprising part of (1) is that, if $G$ is modeled on a finite graph, which may contain many directed cycles, there is never a range of distributions of bidding resources in which both Alice and Bob can prolong the game indefinitely and force a draw.  On the other hand, for infinite games $R(G)$ can be any real number between zero and one \cite[p. 256]{LLPSU}, and $1-R(\Gbar)$ can be any real number between zero and $R(G)$; in particular, the Richman threshold may be irrational and there may be an arbitrarily large range in which both players can force a draw.  The connection with random turn games given by (2) is especially intriguing given recent work connecting random turn selection games with conformal geometry and ideas from statistical mechanics \cite{PSSW07}.

The discrete bidding variations on games that we study here arose through recreational play, as a way to add spice and interest to old-fashioned two player games such as chess and Tic-Tac-Toe.  The real valued bidding and symmetric play in Richman's original theory are mathematically convenient, but poorly suited for recreational play, since most recreational games are partisan and no one wants to keep track of bids like  $e^{\sqrt \pi} + \log 17$.  Bidding with a relatively small number of discrete chips, on the other hand, is easy to implement recreationally and leads to interesting subtleties.  For instance, ties happen frequently with discrete bidding with small numbers of chips, so the tie-breaking method is especially important.  To avoid the element of chance in flipping coins, we introduce a deterministic tie-breaking method, which we call the \emph{tie-breaking advantage}.  If the bids are tied, the player who has the tie-breaking advantage has the choice either to declare himself the winner of the bid and give the tie-breaking advantage to the other player, or declare the other player the winner of the bid and keep the tie-breaking advantage.  See Section~\ref{tie-breaking section} for more details.  As mentioned earlier, partisan games still behave well under bidding variations provided that the winner of the bid has the option of forcing the other player to move in zugzwang positions.

Other natural versions of bidding in combinatorial game play are possible, and some have been studied fruitfully.  The most prominent example is Berlekamp's ``economist's view of combinatorial games" \cite{Berlekamp96},  which is closely related to Conway's theory of thermography \cite{Conway76} and has led to important advances in understanding Go endgames.  

\bigskip

Since this paper was written, Bidding Chess has achieved some popularity among fans of Chess variations \cite{Beasley08, Beasley08b}.  Also, bidding versions of Tic-Tac-Toe and Hex have been developed for recreational play online, by Jay Bhat and Deyan Simeonov.  Readers are warmly invited to play against the computer at 
\begin{center}
\href{http://bttt.bidding-games.com/online/}{\texttt{http:/\!/bttt.bidding-games.com/online/}} 
\end{center}
\noindent and
\begin{center}
\href{http://hex.bidding-games.com/online/}{\texttt{http:/\!/hex.bidding-games.com/online/}},
\end{center}
\noindent and to challenge friends through Facebook at 
\begin{center}
\href{http://apps.facebook.com/biddingttt}{\small \texttt{http:/\!/apps.facebook.com/biddingttt}} \ and \
\href{http://apps.facebook.com/biddinghex}{\small \texttt{http:/\!/apps.facebook.com/biddinghex}}.
\end{center}
\noindent The artificial intelligence for the computer opponent in Bidding Hex is based on the analysis of Random-Turn Hex in \cite{PSSW07} and connections between random turn games and Richman games, and is presented in detail in \cite{biddinghexai}.  Although we cannot prove that the algorithm converges to an optimal or near-optimal strategy, it has been overwhelmingly effective against human opponents.

\subsection{A game of bidding Tic-Tac-Toe}

We conclude the introduction with two examples of sample bidding games.  First, here is a game of Tic-Tac-Toe in which each player starts with four bidding chips, and Alice starts with tie-breaking advantage.  In Tic-Tac-Toe there is no zugzwang, so the players are simply bidding for the right to move.
\vspace{ 3pt}

\noindent \emph{First move.} Both players bid one for the first move, and Alice chooses to use the tie-breaking advantage, placing a red {\color{red} A} in the center of the board. \vspace{3pt}

\noindent  \emph{Second move.}  Now Alice has three chips, and Bob has five chips plus the tie-breaking advantage.  Once again, both players bid one.  Bob uses the tie-breaking advantage and places a blue {\color{blue} B} in the upper-left corner.

\vspace{ 3pt}

\noindent
\emph{Third move.} Now Alice has four chips plus the tie-breaking advantage, while Bob has four chips.  Alice bids two, and Bob also bids two.  This time Alice decides to keep the tie-breaking advantage, and lets Bob make the move.  Bob places a blue {\color{blue} B} in the upper-right corner, threatening to make three in a row across the top.  The position after three moves is shown in the following figure.

\begin{center}
\includegraphics{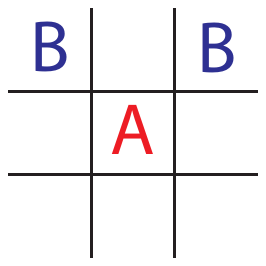}
\end{center}
\vspace{-5 pt}

\noindent
\emph{Fourth move.}  Now Alice has six chips plus the tie-breaking advantage, and Bob has two chips.  Bob is one move away from winning, so he bets everything, and Alice must give him two chips, plus the tie-breaking advantage, to put a red {\color{red} A} in the top center and stop him.

\vspace{ 3pt}

\noindent
\emph{Conclusion.}  Now Alice has four chips, and Bob has four chips plus the tie-breaking advantage.  Alice is one move away from victory and bets everything, so Bob must also bet everything, plus use the tie-breaking advantage, to move bottom center and stop her.  Now Alice has all eight chips, plus the tie-breaking advantage, and she coolly hands over the tie-breaking advantage, followed by a single chip, as she moves center left and then center right to win the game.
\vspace{5pt}

Normal Tic-Tac-Toe tends to end in a draw, and Alice and Bob started with equal numbers of chips, so it seems that the game should have ended in a draw if both players played well.  But Alice won decisively.  \emph{What did Bob do wrong?}

\subsection{A game of bidding chess.}

Here we present an actual game of bidding chess, played in the common room of the mathematics department at UC Berkeley, in October 2006.  Names have been changed for reasons the reader may imagine.

Alice and Bob each start with one hundred bidding chips.  Alice offers Bob the tie-breaking advantage, but he declines.  Alice shrugs, accepts the tie-breaking advantage, and starts pondering the value of the first move.  Alice is playing black, and Bob is playing white.

\vspace{5pt}

\noindent
\emph{First move.}  After a few minutes of thought on both sides, Alice bids twelve and Bob bids thirteen for the first move.  So Bob wins the bid, and moves his knight to c6.  Now Alice has one hundred and thirteen chips and the tiebreaking advantage, and Bob has eighty seven chips.
\vspace{5pt}

\noindent
\emph{Second move.}  Alice figures that the second move must be worth no more than the first, since it would be foolish to bid more than thirteen and end up in a symmetric position with fewer chips than Bob.  She decides to bid eleven, which seems safe, and Bob bids eleven as well.  Alice chooses to use the tie-breaking advantage and moves her pawn to e3.  Bob, who played chess competitively as a teenager, is puzzled by this conservative opening move.
\vspace{5pt}

\noindent
\emph{Third move.}  Now Alice has one hundred and two chips, and Bob has ninety eight and the tie-breaking advantage.  Sensing the conservative tone, Bob decides to bid nine.  He is somewhat surprised when Alice bids fifteen.  Alice moves her bishop to c4.  The resulting position is shown below.  

\vspace{5pt}
\begin{center}
\scalebox{.4}{\includegraphics{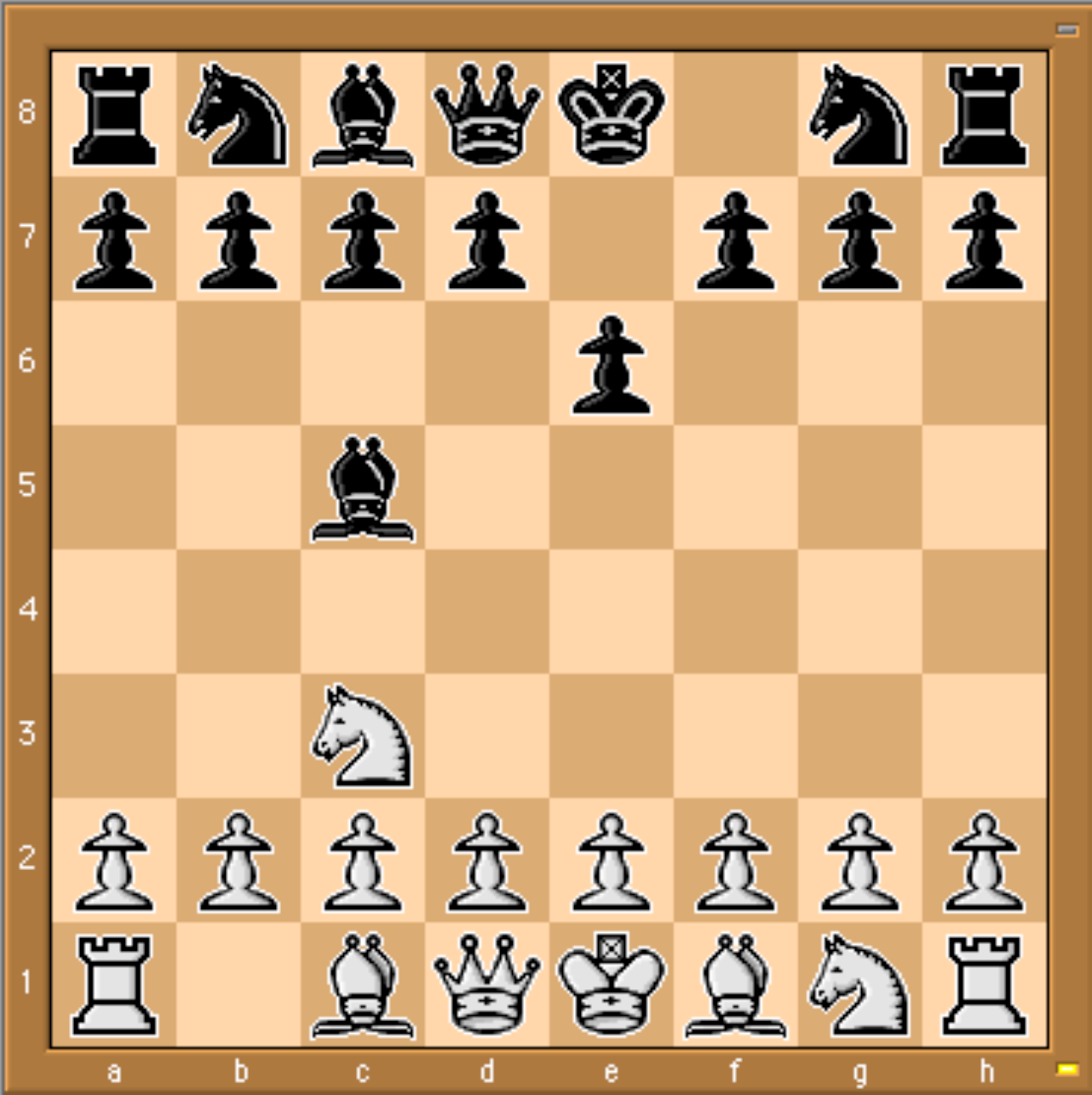}}
\end{center}
\vspace{5pt} 

\noindent
\emph{Fourth move.}  Now Alice has eighty seven chips, while Bob has one hundred and thirteen and the tie-breaking advantage.  Since Alice won the last move for fifteen and started an attack that he would like to counter, Bob bids fifteen for the next move.  Alice bids twenty two, and takes the pawn at f7.  Bob realizes with some dismay that he must win the next move to prevent Alice from taking his king, so he bids sixty five, to match Alice's total chip count, and uses the tie-breaking advantage to win the bid and take Alice's bishop with his king.  The resulting position after five moves is shown here.
\vspace{5pt}

\vspace{5pt}
\begin{center}
\scalebox{.4}{\includegraphics{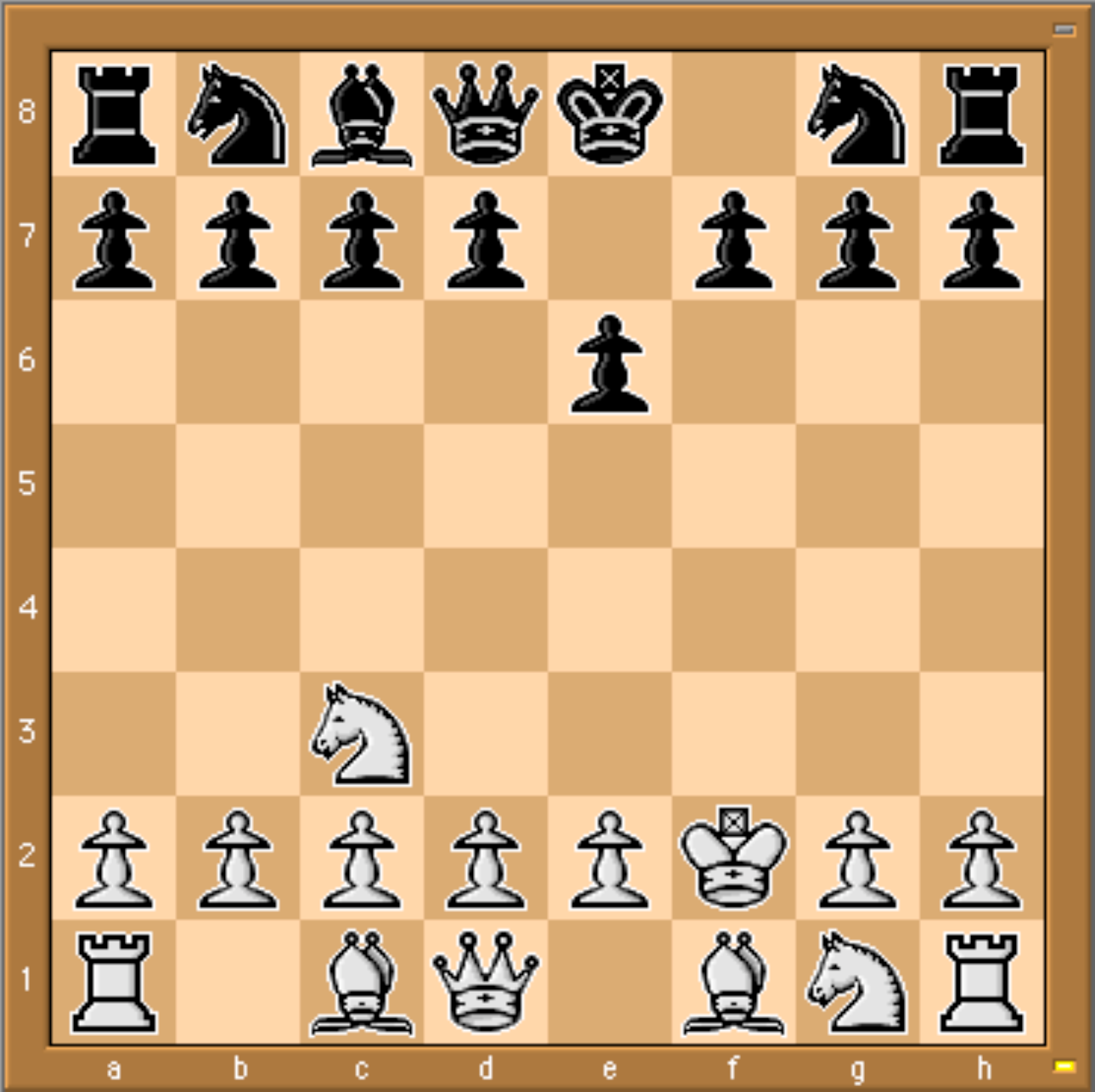}}
\end{center}
\vspace{5pt} 

\noindent
\emph{Conclusion.}  Now Bob has a material advantage, but Alice has one hundred and thirty chips, plus the tie-breaking advantage.  Pondering the board, Bob realizes that if Alice wins the bid for less than thirty, then she can move her queen out to f3 to threaten his king, and then bid everything to win the next move and take his king.  So Bob bids thirty, winning over Alice's bid of twenty-five.  Bob moves his knight to f6, to block the f-column, but Alice can still threaten his king by moving her queen to h4.  Since Alice has enough chips so that she can now win the next two bids, regardless of what Bob bids, and capture the king.  Alice suppresses a smile as Bob realizes he has been defeated.  Head in his hands, he mumbles, \emph{``That was a total mindf**k."}

\vspace{10 pt}

\noindent \textbf{Acknowledgments.}  I am grateful to the organizers and audience at the thirteenth BAD Math Day, in Fall 2006 at MSRI, where discrete bidding games first met the general public, for their patience and warm reception.  I also thank Elwyn Berlekamp, David Eisenbud, and Ravi Vakil for their encouragement, which helped bring this project to completion.  And finally, I throw down my glove at bidding game masters Andrew Ain, Allen Clement, and Ed Finn.  Anytime.  Anywhere.  \emph{---SP}

\section{Preliminaries}

\subsection{Game model}  

Let $G$ be a game played by two players, Alice and Bob, and modeled by a colored directed graph.  The vertices of the graph represent possible positions in the game, and there is a distinguished vertex representing the starting position.  The colored directed edges represent valid moves.  Red and blue edges represent valid moves for Alice and for Bob, respectively, and two vertices may be connected by any combination of red and blue edges, in both directions.  Each terminal vertex represents a possible ending position of the game, and is colored red or blue if it is a winning position for Alice or Bob, respectively, and is uncolored if it is a tie.  For any possible position $v$ in the game, we write $G_v$ for the game played starting from $v$.

Recall that we write $\Gbar$ for the game that is exactly like $G$ except that Alice and Bob exchange roles.  So $\overline G$ is modeled by the same graph as $G$, but with all colors and outcomes switched.

\subsection{Bidding}  Alice and Bob each start with a collection of bidding chips, and all bidding chips have equal value, for simplicity.  When the game begins, the players write down nonnegative integer bids for the first move, not greater than the number of chips in their respective piles.  The bids are revealed simultaneously, and the player making the higher bid gives that many chips to the other, and decides who makes the first move.  The chosen player makes a move in the game, and then the process repeats, until the game reaches an end position or one player is unable to continue.

\subsection{Tie-breaking} \label{tie-breaking section} One player starts, by mutual agreement, with the \emph{tie-breaking advantage}.  If Alice has the tie-breaking advantage, and the bids are tied, then she can either declare Bob the winner of the bid and keep the tie-breaking advantage, or she can declare herself the winner of the bid and give the tie-breaking advantage to Bob.  Similarly, if Bob has the tie-breaking advantage, then he can either declare Alice the winner of the bid, or he can declare himself the winner of the bid and give the tie-breaking advantage to Alice.  In each case, the winner of the bid gives the amount of the bid to the other, and decides who makes the next move.  

One virtue of this tie-breaking method is that it is never a disadvantage to have the tie-breaking advantage (see Lemma~\ref{* is an advantage} below).  Another virtue is that the tie-breaking advantage is worth less than an ordinary bidding chip (Lemma~\ref{1>*}).  Other reasonable tie-breaking methods are possible, and many of the results in this paper hold with other methods. We discuss some other tie-breaking methods in the Appendix.

We write $G(a^*,b)$ for the bidding game in which Alice starts with $a$ bidding chips and the tie-breaking advantage, and Bob starts with $b$ bidding chips.  Similarly, $G(a,b^*)$ is the bidding game in which Alice starts with $a$ bidding chips and Bob starts with $b$ bidding chips and the tie-breaking advantage.

\section{General theory}\label{general theory}

As we make the transition from recreational play to mathematical investigation, one of the most basic questions we can ask about a game $G$ is for which values of $a$ and $b$ does Alice have a winning strategy for $G(a^*,b)$ or for $G(a,b^*)$.  Often it is convenient to fix the total number of chips, and simply ask how many chips Alice needs to win.  And when Alice does have a winning strategy, we ask how to to find it.  The general theory that we present here shows some of the structure that the answers to these questions must have.  For instance, if Alice has a winning strategy for $G(a^*,b)$, then she also has a winning strategy for $G((a+1)^*,b)$, and if she has a winning strategy for $G(a^*,b+1)$, then she also has a winning strategy for $G(a+1, b^*)$.  Similar results hold when Bob starts with the tie-breaking advantage.  In each case, she can just play as if her extra chip wasn't there, or as if Bob's missing chip wasn't missing.  Some of the structural results that we present here, such as the periodicity result in Section \ref{periodicity section} are less obvious, and can be used to greatly simplify computations for specific games.   We apply this approach to solve several games, including Tic-Tac-Toe, in Sections \ref{examples section} and \ref{TTT section}.

For simplicity, we always assume optimal play, and say that Alice wins if she has a winning strategy, and that she does not win if Bob has a strategy to prevent her from winning.  

\subsection{Value of the tie-breaking advantage} \label{basic lemmas}

Roughly speaking, we show that the value of the tie-breaking advantage is strictly positive, but less than that of an ordinary bidding chip.

\begin{lemma} \label{* is an advantage}
If Alice wins $G(a,b^*)$, then she also wins $G(a^*,b)$.
\end{lemma}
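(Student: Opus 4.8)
The plan is to prove this by \emph{strategy transfer}. Given a winning strategy $\sigma$ for Alice in $G(a,b^*)$, I will build a winning strategy for her in $G(a^*,b)$ by playing the real game $G(a^*,b)$ while maintaining an auxiliary simulated copy of $G(a,b^*)$ in which Alice follows $\sigma$. The invariant I will keep is that at every stage the two games sit at the same position of $G$ and Alice and Bob hold the same numbers of chips in each. The crucial observation that makes this possible is that the tie-breaking advantage affects play \emph{only} when a bid is actually tied; as long as the bids agree in the two games and no tie is being resolved, the holder of the advantage is irrelevant, so the two games can be kept perfectly synchronized even though different players nominally hold the advantage.

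The mechanics are as follows. At each turn Alice bids in the real game exactly what $\sigma$ prescribes in the auxiliary game, and I feed real Bob's bid back into the auxiliary game as the bid of $\sigma$'s opponent; since $\sigma$ wins against every Bob, this is a legitimate auxiliary play. If the two bids differ, then the winner of the bid and the amount transferred are forced and identical in both games, with no appeal to the advantage: when Alice wins she makes the move $\sigma$ dictates, and when Bob wins I copy real Bob's move into the auxiliary game. Thus positions and chip counts stay synchronized and the invariant is preserved.

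The one place where the two games genuinely differ is a tie, and reconciling it is the main obstacle. Here Bob holds the advantage in the auxiliary game while Alice holds it in the real game, so I must match their tie options. In $G(a,b^*)$, Bob may either take the bid himself and pass the advantage to Alice, or hand the bid to Alice and retain the advantage; in $G(a^*,b)$, Alice may either take the bid and pass the advantage to Bob, or hand the bid to Bob and retain it. Matching ``Alice takes the bid, advantage ends with Bob'' in the real game against ``Bob hands the bid to Alice and keeps the advantage'' in the auxiliary game (and symmetrically, ``Bob wins, advantage ends with Alice'') reproduces the identical bid-winner, payment, and post-tie advantage holder in both games. Because $\sigma$ is winning against \emph{every} choice Bob could make at a tie, I simply let the auxiliary Bob make the choice corresponding to Alice's, so the games stay synchronized; in fact they coincide exactly from the first tie onward. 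Once this reconciliation is verified, every play of the real game produced by this strategy visits precisely the same sequence of positions as a $\sigma$-play of $G(a,b^*)$, which is a win for Alice, and hence is itself a win for Alice. I expect the tie reconciliation to be the only delicate point, and I note that the argument is a direct strategy transfer rather than an induction, so it applies to arbitrary (including infinite) games $G$.
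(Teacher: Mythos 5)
Your proof is correct and is essentially the paper's argument: the paper likewise has Alice simulate her winning strategy for $G(a,b^*)$, ignoring the advantage until the first tie, at which point she declares herself the winner and hands the advantage to Bob, which reproduces exactly the auxiliary-game state in which Bob conceded the tie while keeping the advantage. Your write-up just spells out the bisimulation and the matching of both tie options in more detail than the paper's shorter version.
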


\begin{proof}
Alice's winning strategy for $G(a^*,b)$ is as follows.  She plays as if she did not have the tie-breaking chip until the first time the bids are tied.  The first time the bids are tied, Alice declares herself the winner of the bid, and gives Bob the tie-breaking advantage.  The resulting situation is the same as if Bob had started with the tie-breaking advantage and declared Alice the winner of the bid.  Therefore, Alice has a winning strategy for the resulting situation, by assumption.
\end{proof}

\begin{lemma}\label{1>*}
If Alice wins $G(a^*,b+1)$, then she also wins $G(a+1,b^*)$.
\end{lemma}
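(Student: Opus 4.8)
The plan is to prove this by strategy stealing: I will turn Alice's winning strategy $\sigma$ for $G(a^*,b+1)$ into a winning strategy for $G(a+1,b^*)$ by having her run a ``shadow'' copy of $G(a^*,b+1)$ in her head and keep it synchronized with the real game. In the shadow game she imagines that she holds the tie-breaking advantage and only $a$ chips, setting one of her $a+1$ real chips aside in reserve; her real opponent then has one fewer chip than her shadow opponent. First I would check the easy invariant: as long as the revealed bids are never tied, the tie-breaking advantage is irrelevant, so Alice can bid exactly what $\sigma$ prescribes, copy Bob's moves into the shadow game and her own shadow moves into the real game, and maintain that the two games agree on the board position while the real game differs only in that Alice holds one extra chip, Bob holds one fewer, and the tie-breaking advantage sits on Bob's side rather than Alice's.

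The whole difficulty is therefore concentrated at the first tie, say with both players bidding $s$. At that moment the shadow game (in which Alice holds the tie-breaking advantage) and the real game (in which Bob holds it) resolve the tie through opposite players, so I would analyze Bob's two options in the real game. The key observation is that if Bob declares himself the winner of the real tie, the resulting real position is \emph{identical} to the shadow position obtained when the shadow opponent instead bids $s+1$ and wins outright: in both cases Bob pays, Bob controls the next move, Alice ends with the same chip counts, and the tie-breaking advantage ends up on Alice's side. Since a real tie forces $s$ to be at most Bob's current chip count, which is one less than the shadow opponent's, the bid $s+1$ is legal in the shadow game, so this is a position $\sigma$ already wins; Alice simply continues with $\sigma$, the two games now being perfectly in step (the reserve chip having been exactly consumed to pay for the synchronization).

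The remaining option is that Bob declares Alice the winner of the tie. Then Alice pays $s$, Bob retains the tie-breaking advantage, and Alice controls the next move while still holding her reserve chip. Comparing chip counts, this real position differs from the shadow position in which Alice wins the same bid (using her shadow tie-breaking advantage) only in that Alice has one extra chip and Bob one fewer, so whenever $\sigma$'s prescription at the tie is to take the bid, monotonicity in the chip counts finishes the job. The main obstacle I anticipate is the complementary, zugzwang-flavored case, in which $\sigma$ prefers to \emph{lose} the tie but Bob nonetheless forces Alice to win it: here the shadow ``Alice takes the bid'' line may itself be losing, and the reserve chip is exactly what must compensate for the wrong-sided tie-breaking advantage. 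I expect to resolve this by comparing instead against the guaranteed-winning shadow line in which the shadow opponent underbids $s$ (which leaves Alice controlling the move with the tie-breaking advantage still on her side) and invoking the statement of the lemma one move deeper. Making this precise amounts to running the entire argument as an induction on the length of Alice's winning strategy, with the per-round analysis above as the inductive step.
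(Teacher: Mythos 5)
Your proof is correct, and it takes a genuinely different route to the same strategy-stealing idea. Both arguments simulate Alice's winning strategy $\sigma$ for $G(a^*,b+1)$ and use the spare chip to stand in for the missing tie-breaking advantage, but the substitution is implemented on opposite sides of the bid. The paper modifies Alice's \emph{bids} proactively: whenever $\sigma$ says ``bid $k$ and use the tie-break,'' she bids $k+1$ in the real game, so she never faces a tie she intended to win; the residual cases are Bob tying at $k+1$ and Bob overbidding, the latter being where the paper's recursion lives. You instead leave Alice's bids unchanged and reinterpret \emph{Bob's} behavior at the first real tie: Bob winning the tie at $s$ is matched with the shadow opponent bidding $s+1$ (your legality check $s+1 \le b'+1$ is the right one, and your chip accounting there is exactly on the nose, whereas the paper's analogous identification is off by one harmless chip in Alice's favor), while Bob conceding the tie is handled either by chip monotonicity or, when $\sigma$ preferred to lose the tie, by comparing against the underbid line and invoking the lemma inductively one position deeper. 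That extra inductive appeal is the price of your reactive formulation --- the paper's bid inflation never needs it --- but in exchange your case analysis surfaces explicitly the one scenario (Bob forcing Alice to win a tie she wanted to lose) that the paper's write-up passes over with ``the analysis \ldots is similar.'' Both arguments rest on the same monotonicity facts asserted at the start of Section 3 and on the same induction over the depth of $\sigma$ to be fully rigorous.
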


\begin{proof}
Alice's winning strategy for $G(a+1, b^*)$ is as follows.  She begins by playing as if she started with $a$ chips and the tie-breaking advantage except that whenever her strategy for $G(a^*, b+1)$ called for bidding $k$ and using the tie-breaking advantage, she bids $k+1$ instead.  She continues in this way until either she wins such a bid for $k+1$ or Bob uses the tie-breaking advantage.

Suppose that Alice's strategy for $G(a^*, b+1)$ called for bidding $k$ for the first move and using the tie-breaking advantage in case of a tie.  Then Alice bids $k+1$ for the first move.  If Alice wins the bid, then the resulting situation is the same as if Alice had won the first bid in $G(a^*, b+1)$ using the tie-breaking advantage, so Alice has a winning strategy by hypothesis.  Similarly, if Bob wins the bid using the tie-breaking advantage, then the resulting situation is the same as if Bob had won the first bid in $G(a^*, b+1)$ by bidding $k+1$, so Alice has a winning strategy.  Finally, if Bob bids $k+2$ or more chips to win the bid, then the resulting situation is a position that could have been reached following Alice's winning strategy for $G(a+1, b^*)$, except that Alice has traded the tie-breaking advantage for two or more chips, and Alice can continue with her modified strategy outlined above.

The analysis of the case where Alice's strategy for $G(a^*,b+1)$ did not call for using the tie-breaking advantage for the first move is similar.
\end{proof}

Although Lemma \ref{1>*} shows that trading the tie-breaking chip for a bidding chip is always advantageous, giving away the tie-breaking chip in exchange for an extra bidding chip from a third party is not necessarily a good idea; for any positive integer $n$, there is a game $G$ such that Alice has a winning strategy for $G(a^*,b)$, but not for $G(a+n, b^*)$, as the following example demonstrates.

\begin{example} \label{bid zero example}
Let $G$ be the game where Bob wins if he gets any of the next $n$ moves, and Alice wins otherwise. Then for $n \ge 1$, $(k^*, 0)$ is an Alice win if and only if $k\ge 2^{n-1}-1$, while $(k, 0^*)$ is an Alice win if and only if $k\ge 2^n-1$. 
\end{example}

\subsection{Using the tie-breaking advantage}

In order for the tie-breaking advantage to have strictly positive value, as shown in Lemma~\ref{* is an advantage}, it is essential that the player who has it is not required to use it.  However, the following proposition shows that it is always a good idea to use the tie-breaking advantage, unless you want to bid zero.

\begin{proposition} \label{use the advantage}
Both players have optimal strategies in which they use the tie-breaking advantage whenever the bids are nonzero and tied.
\end{proposition}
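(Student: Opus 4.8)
The plan is to prove the statement by a weak-domination argument at the level of a single bidding round: I will show that, for whichever player holds the tie-breaking advantage, the action ``bid $k$ and \emph{decline} the tie'' is never needed once $k\geq 1$, because it is always at least as good to ``bid $k-1$ and \emph{take} the tie.'' Granting this, one obtains the desired optimal strategy by starting from any optimal strategy and replacing every prescription to bid some $k\geq 1$ and decline a tie with the prescription to bid $k-1$ and take the tie. Since a nonzero tie can only occur between two bids of size at least one, and every positive bid surviving the replacement now takes the tie, the modified strategy uses the advantage whenever the bids are nonzero and tied. Note that the reduction $k\mapsto k-1$ has no analogue at $k=0$, which is exactly the ``unless you want to bid zero'' exception flagged before the statement.

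To set up the domination, suppose Alice holds the advantage at a position $v$ with $a$ and $b$ chips for Alice and Bob, write $D_k$ for the action ``bid $k$, decline a tie'' and $U_{k-1}$ for the action ``bid $k-1$, take a tie,'' and compare their outcomes against an arbitrary bid $j$ by Bob. If $j\geq k$, then under both actions Bob wins the bid outright by paying $j$ (for $D_k$ with $j=k$ this is the declined tie, which leaves the identical configuration), Alice retains the advantage, and Bob chooses the mover; the two actions yield identical positions. If $j\leq k-2$, then both actions win the bid outright and leave Alice with the advantage to choose the mover, but $U_{k-1}$ costs one fewer chip, so it leaves Alice with one extra chip and Bob with one fewer; by the basic monotonicity noted at the start of this section (a player may always ignore an extra chip, or pretend a missing opponent chip is present), $U_{k-1}$ is at least as good for Alice.

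The only substantive case, and the main obstacle, is $j=k-1$. Here $D_k$ wins outright, paying $k$ and keeping the advantage, and leaves the configuration $G_v$ with Alice holding $a-k$ chips and the advantage and Bob holding $b+k$ chips; whereas $U_{k-1}$ ties and takes the tie, paying $k-1$ and ceding the advantage, leaving Alice with $a-k+1$ chips, Bob with $b+k-1$ chips and the advantage. In both configurations Alice then chooses the mover. Passing from the first configuration to the second gives Alice one more chip and Bob the advantage and one fewer chip, which is precisely the trade in Lemma~\ref{1>*}. Applying that lemma to the game that follows each possible choice of mover shows the second configuration is at least as good for Alice: if Alice's winning choice in the first configuration is to move to some position $v'$, then $G_{v'}((a-k)^*,b+k)$ is a win, so by Lemma~\ref{1>*} so is $G_{v'}(a-k+1,(b+k-1)^*)$; if instead her winning choice is to force Bob to move, then she wins $G_{q}((a-k)^*,b+k)$ for Bob's best reply $q$, hence for \emph{every} reply $q$, and Lemma~\ref{1>*} upgrades each of these to a win of $G_{q}(a-k+1,(b+k-1)^*)$, so forcing Bob to move still wins. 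The careful point to get right is this last branch, together with checking that Lemma~\ref{1>*} is being invoked at the post-bid ``choose the mover'' stage rather than at the start of a bidding round; the same reasoning applies verbatim when Bob holds the advantage, giving the statement for both players.
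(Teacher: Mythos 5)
Your proposal is correct and follows essentially the same route as the paper: replace the action ``bid $k$ and decline the tie'' by ``bid $k-1$ and take the tie,'' observe the outcomes agree or improve against every opponent bid, and invoke Lemma~\ref{1>*} for the critical case where the opponent bids $k-1$. Your treatment is somewhat more careful than the paper's (explicitly splitting the opponent's bids into three ranges and applying Lemma~\ref{1>*} at the post-bid stage for both the ``move yourself'' and ``force the opponent to move'' branches), but the underlying argument is identical.
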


\begin{proof}
Suppose that Alice has an optimal strategy which involves bidding $k$, but letting Bob win the bid if the bids are tied. If $k$ is positive, then Alice can do at least as well by bidding $(k-1)^*$ instead.  If Bob bids $k$ or more, the resulting situation is unchanged, while if Bob bids $k-1$ or less, then Alice pays $(k-1)^*$ instead of $k$ to win the bid, which is at least as good by Lemma~\ref{1>*}.
\end{proof}

If the bids are tied at zero, it is not necessarily a good idea to use the tie-breaking advantage, as the following example shows.

\begin{example}
Consider the game where the player who makes the second move wins.  Suppose Alice and Bob are playing this game, and they both start with the same number of chips.  Then the player who starts with the tie-breaking advantage has a unique winning strategy---bid zero for the first move, decline to use the tie-breaking advantage, and bid everything to make the second move and win.
\end{example}

Proposition~\ref{use the advantage} shows that when looking for an optimal strategy, we can always assume that the player with the tie-breaking advantage either bids 0 or $ 0^*, 1^*, \ldots$.   Furthermore, if the player with the tie-breaking advantage bids 0, then the second player wins automatically and does best to bid 0 as well. Otherwise, if the player with the tie-breaking advantage bids $k^*$, we may assume that the second player either bids $k$ and gains $k^*$ chips while letting the first player move, or else bids $k+1$ and wins the bid.  These observations significantly reduce the number of bids one needs to consider when searching for a winning strategy.

\subsection{Classical Richman calculus}

For the reader's convenience, here we briefly recall the classical methods for determining the critical threshold $R(G)$ between zero and one such that Alice has a winning strategy if her proportion of the bidding resources is greater than $R(G)$ and does not have a winning strategy if her proportion of the bidding resources is less than $R(G)$.  This \emph{Richman calculus} also gives a method for finding the optimal moves and optimal bids for playing $G$ as a bidding game with real-valued bidding.  See the original papers \cite{LLPU, LLPSU} for further details.  In Section \ref{discrete section}, we present a similar method for determining the number of chips that Alice needs to win a discrete bidding game with a fixed total number of chips, as well as the optimal bids and moves for discrete bidding.

First, suppose $G$ is bounded.  We compute the critical thresholds $R(G_v)$ for all positions $v$ in $G$ by working backwards from the end positions.  If $v$ is an end position then
\[
R(G_v) = \left\{ \begin{array}{ll} 0 & \mbox{ if } v \mbox{ is a winning position for Alice.} \\
                                                        1 & \mbox{ otherwise.}
\end{array} \right.
\]                                                        
Suppose $v$ is not an end position.  If Alice makes the next move, then she will move to a position $w$ such that $R(G_w)$ is minimal.  Similarly, if Bob makes the next move, then he will move to a position $w'$ such that $R(G_{w'})$ is maximal.  We define
\[
R_A(G_v) = \min_{A: v \rightarrow w} R(G_w) \mbox{ \ \ and \ \ } R_B(G_v) = \max_{B: v \rightarrow w'} R(G_{w'}),
\]
where the minimum and maximum are taken over Alice's legal moves from $v$ and Bob's legal moves from $v$, respectively.  The critical threshold $R(G_v)$ is then
\[
R(G_v) = \frac{R_A(G_v) + R_B(G_v)}{2}.
\]
The difference $R_B(G_v) - R_A(G_v)$ is a measure of how much both players want to move (or to prevent the other player from moving).  If this difference is positive, then both players want to move, and if the difference is negative then the position is zugzwang and both players want to force the other to move.  In either case, an optimal bid for both players is $\Delta_v = |R_B(G_v) - R_A(G_v)| / 2$.

Next, suppose $G$ is locally finite, but not necessarily bounded.  Let $G[n]$ be the truncation of $G$ after $n$ moves.  So $G[n]$ is just like $G$ except that the game ends in a tie if there is no winner after $n$ moves.  In particular, Alice wins $G[n]$ if and only if she has a strategy to win $G$ in at most $n$ moves.  We can compute the critical threshold $R(G)$ when $G$ is bounded by using the bounded truncations $G[n]$, as follows.  First, $R(G[n])$ can be computed by working backward from end positions, since $G[n]$ is bounded.  Now $R(G[n])$ is a nonincreasing function of $n$ that is bounded below by zero, so these critical thresholds approach a limit as $n$ goes to infinity.  Furthermore, since $G$ is locally finite, Alice has a winning strategy for $G$ if and only if she has a winning strategy that is guaranteed to succeed in some fixed finite number of moves.  It follows that
\[
R(G) = \lim_{n \rightarrow \infty} R(G[n]).
\]

For games that are not locally finite, Alice may have a winning strategy, but no strategy that is guaranteed to win in a fixed finite number of moves.  In this case, $R(G)$ is not necessarily the limit of the critical thresholds $R(G[n])$, as the following example shows.

\vspace{10 pt}

\begin{example} 
Let $\A^m$ be the game that Alice wins after $m$ moves, and let $G$ be the game in which the first player to move can choose between the starting positions of $\A^m$ for all positive integers $m$.  Then Alice is guaranteed to win $G$, so $R(G) = 0$, but the critical threshold of each truncation is $R(G[n]) = 1/2$.  Indeed, if Bob wins the first move of $G[n]$, then he can move to the starting position of $\A^n$, which Alice cannot win in the remaining $n-1$ moves.
\end{example}

\subsection{Discrete Richman calculus}\label{discrete section}

Here we return to discrete bidding and compute the number of chips that Alice needs to win a locally finite game, assuming that the total number of chips is fixed.  Since Alice may or may not have the tie-breaking advantage, the total number of chips that Alice has is an element of $\N \cup \N^*$, which is totally ordered by
\[
0 < 0^* < 1< 1^* < 2 < \cdots.
\]
If we fix the game $G$ and the total number of ordinary chips $k$, then it follows from Lemmas~\ref{* is an advantage} and \ref{1>*} that there is a critical threshold $f(G,k) \in \N \cup \N^*$ such that Alice wins if and only if she has at least $f(G,k)$ chips.  Note that Alice can have at most $k^*$ chips, so if $G$ is a game in which Alice never wins, then $f(G,k) = k+1,$ by definition.

The critical thresholds $f(G,k)$ can be computed recursively from end positions for bounded games, and the critical thresholds of locally finite games can be computed from the critical thresholds of their truncations, just like the critical thresholds $R(G)$ for real-valued bidding.  However, one must account for the effects of rounding, since the bidding chips are discrete, as well as the tie-breaking advantage.  

First, suppose that $v$ is an end position.  Then
\[
f(G_v,k) = \left\{ \begin{array}{ll} 0 & \mbox{ if } v \mbox{ is a winning position for Alice.} \\
                                                        k+1 & \mbox{ otherwise.}
\end{array} \right.
\]                                                        
Next, suppose $v$ is not an end position.  If Alice makes the next move, then she will move to a position $w$ such that $f(G_w,k)$ is minimal.  Similarly, if Bob makes the next move, then he will move to a position $w'$ such that $f(G_{w'},k)$ is maximal.  We define
\[
f_A(G_v,k) = \min_{A: v \rightarrow w} f(G_w,k) \mbox{ \ \ and \ \ } f_B(G_v,k) = \max_{B: v \rightarrow w'} f(G_{w'},k),
\]
where the minimum and maximum are taken over Alice's legal moves from $v$ and Bob's legal moves from $v$, respectively.  

For an element $x \in \N \cup \N^*$, we write $|x|$ for the underlying integer, so $|a|$ and $|a^*|$ are both equal to $a$, for nonnegative integers $a$.  We also define $a + *  = a^*$.  For a real number $x$, we write $\lfloor x \rfloor$ for the greatest integer less than or equal to $x$.

\begin{theorem} \label{discrete recursion}
For any position $v$, the critical threshold $f(G_v,k)$ is given by
\[
f(G_v,k) = \left \lfloor \frac{ | f_A(G_v, k) | +  | f_B(G_{v},k) | }{ 2 } \right \rfloor + \varepsilon,
\]
where 
\[
\varepsilon = \left \{ \begin{array}{ll} 0 & \mbox{ if } |f_A(G_v,k)| + |f_B(G_{v},k)| \mbox{ is even, and } f_A(G_v, k) \in \N. \\
								    1 & \mbox{ if } |f_A(G_v,k)| + |f_B(G_{v},k)| \mbox{ is odd, and } f_A(G_v, k) \in \N^*. \\
								    * & \mbox{ otherwise.} \\
								    \end{array} \right.
\]
\end{theorem}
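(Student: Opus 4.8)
The plan is to prove the formula by induction on the length of the longest play from $v$, treating the displayed recursion as the inductive step: we assume the true thresholds $f(G_w,k)$ are known for every position $w$ reachable from $v$ in one move, and deduce $f(G_v,k)$ (the locally finite case then following by the truncation limit described above). By Lemmas~\ref{* is an advantage} and~\ref{1>*} the quantity $f(G_v,k) \in \N \cup \N^*$ is well defined, so it suffices to show two things: that Alice wins $G_v$ whenever she holds at least the stated value in the order $0 < 0^* < 1 < 1^* < \cdots$, and that Bob wins whenever she holds strictly less. By Proposition~\ref{use the advantage} we may restrict attention to bids of the form $0$ or $j^*$ for the player holding the tie-breaking advantage, which cuts the single-round analysis down to three outcomes of any bid: the bidder wins outright, the bids tie (resolved by the tie-break holder), or the bidder is outbid. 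Writing $a = |f_A(G_v,k)|$ and $b = |f_B(G_v,k)|$, the winner of the bid chooses who moves, so after one round Alice reaches a position of threshold $f_A$ on the side she controls to her advantage and $f_B$ on the side Bob controls; this is the discrete analog of the continuous rule $R(G_v) = (R_A + R_B)/2$ with optimal bid $|R_B - R_A|/2$.

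For the first (Alice-wins) direction, assume Alice holds exactly the value $\lfloor (a+b)/2 \rfloor + \varepsilon$ and have her bid the balanced amount $\Delta \approx \lfloor |b - a|/2 \rfloor$, using or withholding the tie-breaking advantage according to whether that value carries a star. The verification is a short computation in each outcome: if Alice wins the bid she pays $\Delta$ and is left with an amount that, by the choice of $\Delta$, is at least the child threshold on the $f_A$-side; if she is outbid she receives at least $\Delta + 1$ chips and is left with at least the $f_B$-side threshold; and if the bids tie, the tie-break holder's two options both leave Alice at or above the relevant child threshold, so she wins by the inductive hypothesis. Throughout, comparisons of chip counts that straddle the star boundary are exactly the content of Lemmas~\ref{* is an advantage} and~\ref{1>*}, which let us trade a tie-breaking advantage for a chip, or vice versa, when matching Alice's holdings against $f_A$ or $f_B$.

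The second (Bob-wins) direction is symmetric: when Alice holds one step less than the stated value, Bob blocks her with the analogous balanced bid, and the same three-outcome computation shows he can always drive Alice strictly below the child threshold she would need. Formally this is the Alice-wins argument applied to $\Gbar$, transported along the order-reversing complement $c \leftrightarrow (k-c)^*$ of $\N \cup \N^*$; since $\Gbar$ is modeled on the same graph, the inductive hypothesis is available for it at every child, and establishing the precise complement relation between $f(G_v,k)$ and $f(\Gbar_v,k)$ --- the discrete analog of $R(G) + R(\Gbar) = 1$ --- is routine once the first direction is in hand.

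The crux, and the step I expect to be the main obstacle, is verifying that the rounding and the trichotomy for $\varepsilon$ are exactly right rather than merely approximately so. The floor $\lfloor (a+b)/2 \rfloor$ captures the integer part as in the continuous theory, but the choice among $\varepsilon \in \{0, 1, *\}$ is governed by the parity of $a + b$ together with the star-status of $f_A$ specifically, and not of $f_B$. This asymmetry is genuine: it arises because a tie is resolved by the current holder of the tie-breaking advantage handing it to the opponent, so at the margin the scenario in which Bob concedes a tie lands Alice at exactly $f_A$ with the star transferred, and this is what forces the correction to depend on whether $f_A \in \N$ or $f_A \in \N^*$. Pinning down each of the four (parity) $\times$ (star of $f_A$) cases, and checking that the balanced bid together with Proposition~\ref{use the advantage} realizes the claimed threshold and not one chip more or less, is the delicate bookkeeping at the heart of the proof.
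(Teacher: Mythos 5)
Your overall architecture is the same as the paper's: reduce to bounded games via the truncations $G[n]$, induct on the length of the game, have the relevant player make the balanced bid $\big\lfloor \big||f_A(G_v,k)|-|f_B(G_v,k)|\big|/2\big\rfloor$ (adjusted by the tie-breaking advantage according to the parity and star cases), and check the three outcomes of a single round against the child thresholds. Your first description of the second direction --- Bob blocks with the analogous balanced bid and drives Alice strictly below the child threshold, then invokes the inductive hypothesis --- is exactly the paper's argument, and your identification of the $\varepsilon$ case analysis as the delicate bookkeeping is accurate.

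However, the route you propose for \emph{formalizing} the Bob direction --- applying the Alice-wins argument to $\Gbar$ and transporting along an order-reversing complement $c \leftrightarrow (k-c)^*$, via a ``routine'' relation between $f(G_v,k)$ and $f(\Gbar_v,k)$ --- does not work in the generality required. The threshold $f(\Gbar_v,k)$ measures when the player in Bob's role can \emph{win}, whereas the complementary statement you need is that Bob can \emph{prevent Alice from winning}; these coincide only when no play can end in a tie. The game model explicitly allows tie terminal positions, and, more fatally, the bounded truncations $G[n]$ on which the whole induction rests are \emph{defined} to end in a tie after $n$ moves, so essentially every game appearing in the inductive argument has a nonempty draw range and no exact complement relation (this is the discrete shadow of the fact that $R(\Gbar)$ can exceed $1-R(G)$ for non-finite games). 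You should therefore drop the duality and keep your first, direct formulation: Bob's balanced bid guarantees that whichever position $w'$ is reached, Alice holds fewer than $f(G_{w'},k)$ chips, and the inductive hypothesis says Bob can then prevent her from winning --- no statement about Bob winning, and hence no complement relation, is ever needed.
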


\begin{proof}
Since the critical threshold for any locally finite game can be computed from its bounded truncations, it is enough to prove the theorem in the case where $G$ is bounded.  If the game starts at an end position, then the theorem is vacuously true.  We proceed by induction on the length of the bounded game.

Suppose $|f_A(G_v,k)| + |f_B(G_{v},k|$ is even and $f_A(G_v,k) \in \N$.  If Alice has at least $(|f_A(G_v,k)| + |f_B(G_{v},k|) / 2$ chips, then she can bid
\[
\Delta = \big| |f_A(G_v,k)| - |f_B(G_{v},k| \big| /2
\]
and guarantee that she will end up in a position $w$ with at least $f(G_w,k)$ chips.  Then Alice has a winning strategy, by induction, since $G_w$ is a bounded game of shorter length than $G$.  Similarly, if Alice has fewer than $( |f_A(G_v,k)| + |f_B(G_{v},k|)  /2$ chips, then Bob can bid $\Delta$ and guarantee that he will end up in a position $w'$ where Alice will have fewer than $f(G_{w'},k)$ chips.  Then Bob can prevent Alice from winning, by induction.  

Therefore, Alice wins $G$ if and only if she has at least $(|f_A(G_v,k)| + |f_B(G_{v},k|)/2$ chips, as was to be shown.  The proofs of the remaining cases, when $|f_A(G_v,k)| + |f_B(G_{v},k|$ is odd, and when $f_A(G_v,k)$ is in $\N^*$, are similar.  If $|f_A(G_v,k)| + |f_B(G_v,k)|$ is odd and $f_A(G_v,k)$ is in $\N^*$, then the ideal bid for both players is the round down
\[
\Delta = \Big \lfloor \big| |f_A(G_v,k)| - |f_B(G_{v},k| \big| /2 \Big \rfloor.
\]
If $|f_A(G_v,k)| + |f_B(G_v,k)|$ is odd but $f_A(G_v,k)$ is in $\N$, then both players should try to make the smallest possible bid that is strictly greater than $ \Big \lfloor \big| |f_A(G_v,k)| - |f_B(G_{v},k| \big|  /2 \Big \rfloor$.  And if $|f_A(G_v,k)| + |f_B(G_v,k)|$ is even and $f_A(G_v,k)$ is in $\N^*$ then both players should try to make the smallest possible bid that is strictly greater than $\big| |f_A(G_v,k)| - |f_B(G_{v},k|  /2 \big| -1$.  
\end{proof}

\noindent Theorem~\ref{discrete recursion} makes it possible to find both the critical threshold and the optimal strategy for a given bounded game by working backward from end positions.

\begin{example} \label{simple tables}
Suppose $\A$ is a game that Alice is guaranteed to win and $\B$ is a game that Bob wins.  Then
\[
f(\A,k) = 0 \mbox{ \ \ and \ \ } f(\B,k) = k+1.
\]
Let $\E$ be the game in which the first player to move wins.  Then
\[
f(\E,k) = \left \{ \begin{array}{ll} (k+1)/2 & \mbox{ if } k \mbox{ is odd}. \\
					        \lfloor (k+1)/2 \rfloor ^* & \mbox{ if } k \mbox{ is even}.
					        \end{array} \right.
\]
As games become more complicated, it is more convenient to encode the possibilities in a table.  For instance, the critical thresholds for the game $\E$ could be put in a table as follows.

\begin{center}
\vskip 10 pt \begin{tabular}{|r||*{2}{l|}}\hline 
$k=2n+$&0&1 \\ \hline \hline
$f(\E, k) = n + $&0*&1 \\ \hline 
\end{tabular}
\end{center}
\vskip 10pt

\noindent Let $A^2$ be the game that Alice wins if she makes either of the first two moves and Bob wins otherwise.  Similarly, let $B^2$ be the game that Bob wins if he makes either of the first two moves and Alice wins otherwise.  Then the critical thresholds for $A^2$ and $B^2$ are given by

\begin{center}
\vskip 10 pt \begin{tabular}{|r||*{4}{l|}}\hline 
$k=4n+$&0&1&2&3\\ \hline \hline
$f(A^2, k) = n + $&0&0*&0*&1\\ \hline 
\end{tabular}
\mbox{ \ \ and \ \ }
\begin{tabular}{|r||*{4}{l|}}\hline 
$k=4n+$&0&1&2&3\\ \hline \hline
$f(B^2, k) = 3n + $&1&1*&2*&3\\ \hline 
\end{tabular}
\end{center}
\vskip 10pt

\noindent See Section~\ref{TTT section} for detailed computations using such tables in a more interesting situation.
\end{example}

\subsection{Discrete bidding with large numbers of chips}

When $G$ is played as a discrete bidding game, the optimal moves for Richman play are not necessarily still optimal.  This may be seen as a consequence of the effects of rounding and tie-breaking in the discrete Richman calculus.  However, one still expects that as the number of chips becomes large, discrete bidding games should become more and more like Richman games.  Roughly speaking, the effects of rounding should only be significant enough to affect the outcome when the number of chips is small or Alice's proportion of the total number of chips is close to the critical threshold $R(G)$.  We think of these situations as ``unstable."

\begin{definition}  
A strategy for Alice is \emph{stable} if, whenever Alice makes a move following this strategy, she moves to a position $w$ such that $R(G_w)$ is as small as possible.  
\end{definition}

\noindent Similarly, we say that a strategy for Bob is stable if, whenever he makes a move following this strategy, he moves to a position $w'$ such that $R(G_{w'})$ is as large as possible.  We say that a discrete bidding game is stable if both Alice and Bob have stable optimal strategies.  Note that the proofs of Lemmas~\ref{* is an advantage} and \ref{1>*} go through essentially without change when ``Alice wins" is replaced by ``Alice has a stable winning strategy."  For instance, if Alice has a stable winning strategy for $G(a,b^*)$, then she also has a stable winning strategy for $G(a^*,b)$.

\begin{theorem} \label{above the threshold}
For any locally finite game $G$, and for any positive $\epsilon$, Alice has a stable winning strategy for $G(a,b^*)$ provided that $a/(a+b)$ is greater than $R(G) + \epsilon$ and $a$ is sufficiently large.
\end{theorem}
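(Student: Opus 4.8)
The plan is to transport the real-valued Richman invariant to the discrete setting, paying a bounded rounding cost at each move and absorbing that cost into the surplus Alice enjoys by being a definite distance $\epsilon$ above the threshold. Write $k = a+b$ for the total number of chips and $R = R(G)$. For a position $v$ reached in play, let $a_v \in \N \cup \N^*$ denote Alice's holding there and define her \emph{surplus} $s_v = |a_v| - k\,R(G_v)$. The hypothesis $a/(a+b) > R + \epsilon$ says exactly that the initial surplus satisfies $s > k\epsilon$, and an end position $v$ is an Alice win precisely when $R(G_v) = 0$, while every other end has $R(G_v) = 1$; so a positive surplus at a terminal position forces an Alice win.

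First I would bound the length of play. Since $G$ is locally finite, $R = \lim_n R(G[n])$, so I may fix $N = N(\epsilon,G)$ with $R(G[N]) < R + \epsilon$, and it will suffice to produce a stable strategy that wins within $N$ moves. I then record the underlying real-valued invariant: if at $v$ Alice moves to a child minimizing $R(G_w)$ and both players make the optimal bid $\Delta_v = |R_B(G_v) - R_A(G_v)|/2$, then her real surplus is preserved when she wins the bid and does not decrease when Bob wins it, using $R(G_v) - R_A(G_v) = \Delta_v = R_B(G_v) - R(G_v)$ in the non-zugzwang case and the symmetric identity in zugzwang, where the winner of the bid elects who moves. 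Note $\Delta_v \le R(G_v)$, so a player with nonnegative surplus can always afford the bid $k\Delta_v$.

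Then I would run the discrete version. Alice plays the same stable moves, but bids the integer $\beta_v = \lfloor k\Delta_v\rfloor$, using or declining the tie-breaking advantage as prescribed by Proposition~\ref{use the advantage}. When Alice wins the bid she pays at most $k\Delta_v$, so $s_v$ does not decrease; when Bob wins, his payment is at least $\beta_v > k\Delta_v - 1$, so $s_v$ falls by less than one. The tie-breaking advantage contributes at most one further unit, controlled by Lemmas~\ref{* is an advantage} and \ref{1>*}, which as noted hold verbatim for stable strategies. Hence $s_v$ decreases by at most a universal constant $c$ per move. Starting from $s > k\epsilon$ and playing at most $N$ moves, the surplus stays above $k\epsilon - cN$, which is positive once $k > cN/\epsilon$; since the proportion is bounded below, this holds as soon as $a$ is large. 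A positive surplus at the terminal position then forces $R(G_v) < 1$, i.e. an Alice win, and the strategy is stable by construction.

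The main obstacle is the interaction between \emph{stability} and \emph{termination}: a Richman-greedy (stable) move need not be optimal for the discrete game nor for the truncation $G[N]$, so I cannot simply invoke the discrete recursion of Theorem~\ref{discrete recursion}. The crux is to show that stable play still reaches an end within the bounded horizon $N$ while the per-move rounding loss stays $O(1)$. This is exactly where local finiteness is indispensable: the non-locally-finite example preceding Section~\ref{discrete section}, the choice among the games $\A^m$, shows that a stable move can otherwise postpone a forced win arbitrarily long and thereby defeat any fixed move bound $N$, so that argument must be seen to rely essentially on the finitely many options at each position.
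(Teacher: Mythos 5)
Your surplus bookkeeping is sound and is essentially a repackaging of the paper's own argument: the paper also reduces to a bounded truncation, plays the Richman-greedy move, bids approximately $k\Delta_v$, and verifies that Alice's holdings stay above $k$ times the current threshold; it merely organizes the estimate as an induction on the length of the bounded game (with the proportional cushion shrinking from $\epsilon$ to $\epsilon/2$ at each level, which is harmless since there are finitely many levels) rather than as a potential function losing $O(1)$ chips per move. Either accounting works, and your per-move loss estimates for the bid $\lfloor k\Delta_v\rfloor$ and for the tie-breaking chip are correct.

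The genuine gap is the one you flag yourself and then leave unresolved: nothing in your argument makes the game end. You track the surplus $s_v = |a_v| - k\,R(G_v)$ against the thresholds of the \emph{untruncated} game, and a positive surplus only certifies a win if play actually reaches a terminal vertex; at a non-terminal position $R(G_v)$ can be small, so ``the surplus stays positive for $N$ moves'' says nothing useful at move $N$. Announcing that ``it will suffice to produce a stable strategy that wins within $N$ moves'' is true but is exactly the statement you never establish, since Bob wins many of the bids and can steer play toward long branches, and a Richman-greedy move need not shorten the game. The paper closes this hole by replacing $G$ with $G[N]$ \emph{before} doing any estimates: all thresholds, optimal bids, and greedy moves are computed in the bounded game, where any position surviving to the horizon without an Alice win has threshold $1$, so a maintained positive surplus is incompatible with reaching the horizon unwon. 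If you rerun your potential argument with $R(G[N]_v)$ in place of $R(G_v)$ throughout, termination comes for free, and local finiteness is used only to get $R(G[N]) < R(G)+\epsilon$ for some $N$ and to choose ``$a$ sufficiently large'' uniformly over the finitely many positions reachable in $N$ moves. (The residual worry that greedy moves for $G[N]$ need not literally minimize $R(G_w)$ is present in the paper's proof as well, and is not the obstruction you identified.)
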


\begin{proof}
First, we claim that it suffices to prove the theorem when $G$ is bounded.  Recall that $G[n]$ denotes the truncation of $G$ after $n$ moves.  So $G[n]$ is bounded and Alice wins $G[n]$ if and only if she wins $G$ in at most $n$ moves.  By \cite[Section~2]{LLPU}, $R(G)$ is the limit as $n$ goes to infinity of $R(G[n])$.  Therefore, replacing $G$ by $G[n]$ for $n$ sufficiently large, we may assume that $G$ is bounded.

Suppose $G$ is bounded and guaranteed to end after $n$ moves.  If $n = 1$, then the theorem is clear.  We proceed by induction on $n$.  Assume the theorem holds for games guaranteed to terminate after $n-1$ moves.  Alice's strategy is as follows.  For the first move, she bids $x$ such that $x/(a+b)$ is between the optimal real-valued bid $\Delta$ and $\Delta + \epsilon /2$, which is possible since $a$ is large.  If Alice moves, then she moves to a position $w$ such that $R(G_w)$ is minimal.  Otherwise, Bob moves wherever he chooses.  Either way, Alice ends up in a game $G_v$ that is guaranteed to terminate after $n-1$ moves, holding $a'$ chips where $a' / (a+b)$ is greater than $R(G_v) + \epsilon / 2$, and hence has a winning strategy for $a$ sufficiently large, by induction.  Since $G$ is locally finite, there are only finitely many possibilities for $v$.  Therefore, $a$ can be chosen sufficiently large for all such possibilities, and the result follows.
\end{proof}

\noindent The conclusion of Theorem~\ref{above the threshold} is false if $G$ is not locally finite; we give an example illustrating this in Section~\ref{examples section}.

\begin{theorem} \label{below the threshold}
For any finite game $G$, and for any positive $\epsilon$, Bob has a stable strategy for preventing Alice from winning $G(a^*,b)$ provided that $a/(a+b)$ is less than $R(G) - \epsilon$ and $b$ is sufficiently large.
\end{theorem}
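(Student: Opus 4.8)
The plan is to deduce this theorem from its counterpart, Theorem~\ref{above the threshold}, applied to the role-reversed game $\Gbar$, using the fact that $R(G) = 1 - R(\Gbar)$ for finite games. The guiding idea is that Bob's task of preventing Alice from winning $G$ becomes, after exchanging the labels of the two players, exactly the task of the ``Alice'' of $\Gbar$ winning $\Gbar$, and that the hypothesis $a/(a+b) < R(G) - \epsilon$ places Bob comfortably \emph{above} the threshold in $\Gbar$.

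First I would record the numerical consequence of finiteness. Since $G$ is finite, property~(1) from the introduction gives $R(G) = 1 - R(\Gbar)$, and moreover $R(\Gbar_w) = 1 - R(G_w)$ for every position $w$, because each subgame $G_w$ is again finite. From the hypothesis $a/(a+b) < R(G) - \epsilon$ we then obtain
\[
\frac{b}{a+b} = 1 - \frac{a}{a+b} > 1 - R(G) + \epsilon = R(\Gbar) + \epsilon.
\]
Next I would set up the dictionary between $G(a^*,b)$ and the role-reversed game. Relabelling Alice and Bob turns $G(a^*,b)$ into $\Gbar(b,a^*)$: the player formerly called Bob is the ``Alice'' of $\Gbar$, holding $b$ chips and no tie-breaking advantage, while the player formerly called Alice is the ``Bob'' of $\Gbar$, holding $a$ chips together with the tie-breaking advantage. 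This matches the format $\Gbar(a',b'^*)$ with $a'=b$ and $b'=a$, and the tie-breaking mechanism is unchanged under the relabelling. Since $\Gbar$ is locally finite, I may apply Theorem~\ref{above the threshold} to $\Gbar$ with the given $\epsilon$: because $b/(a+b) > R(\Gbar) + \epsilon$ and $b$ is sufficiently large, the ``Alice'' of $\Gbar$---that is, Bob---has a stable winning strategy for $\Gbar(b,a^*)$.

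Finally I would translate this conclusion back into $G$. A win for ``Alice'' in $\Gbar$ is a win for Bob in $G$, so in particular Bob prevents Alice from winning, which is the claimed conclusion (indeed this gives the slightly stronger statement that Bob actually wins). Stability also transfers: a move minimizing $R(\Gbar_w)$ among Bob's legal moves is, by the relation $R(\Gbar_w) = 1 - R(G_w)$, exactly a move maximizing $R(G_w)$, which is precisely the stability condition for Bob in $G$.

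The main point to get right---and the reason the hypothesis is finiteness rather than mere local finiteness---is the identity $R(G) = 1 - R(\Gbar)$. This is exactly the property that can fail for infinite games, where a nontrivial draw region between $R(G)$ and $1 - R(\Gbar)$ may appear; without it the numerical step converting ``below the threshold for Alice'' into ``above the threshold for Bob'' breaks down. The only other item demanding care is the bookkeeping of the tie-breaking advantage under the role swap, which, as shown above, lands correctly with the opponent, so that the hypotheses of Theorem~\ref{above the threshold} apply verbatim to $\Gbar(b,a^*)$.
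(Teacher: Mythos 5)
Your proposal is correct and is essentially identical to the paper's own proof: the paper also deduces the result by applying Theorem~\ref{above the threshold} to $\Gbar(b,a^*)$ together with the identity $R(\Gbar) = 1 - R(G)$ for finite games. You have simply spelled out the bookkeeping (the numerical inequality, the placement of the tie-breaking advantage, and the transfer of stability) that the paper leaves implicit.
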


\begin{proof}
Recall that $\overline G$ is the game that is identical to $G$ except that Alice and Bob exchange roles.  If $G$ is finite, then $R(\overline G) = 1- R(G)$.  Therefore, the result follows from Theorem~\ref{above the threshold} applied to $\overline G(b,a^*)$.
\end{proof}

\noindent  Under the hypotheses of Theorem~\ref{below the threshold}, Bob actually has a winning strategy.  For locally finite games that are not finite, $R(\overline G)$ may be strictly larger than $1-R(G)$, so Bob should not be expected to have a winning strategy for $G(a^*,b)$.  With real-valued bidding he may only have a strategy to prolong the game into an infinite draw.

Next, we show that finite games always become stable when the number of chips becomes sufficiently large.

\begin{theorem}\label{stable for many chips}
Let $G$ be a finite game.  Then $G(a^*,b)$ and $G(a,b^*)$ are stable when $a+b$ is sufficiently large.
\end{theorem}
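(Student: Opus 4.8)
The plan is to reconcile the two notions of optimality at play: a \emph{stable} move is one optimizing the real-valued Richman value $R(G_w)$, whereas the discrete-optimal moves produced by the recursion of Theorem~\ref{discrete recursion} optimize the discrete threshold $f(G_w,k)$. Since stability constrains only which position one moves to, and not the bid, it suffices to show that for $k = a+b$ large, at every position the minimum defining $f_A(G_v,k)$ (resp.\ the maximum defining $f_B(G_v,k)$) is attained at a stable move; the optimal bids supplied by Theorem~\ref{discrete recursion} may then be used unchanged. The first step is to record the asymptotic agreement of the two thresholds: for every position $v$ of the finite game $G$, one has $|f(G_v,k)|/k \to R(G_v)$ as $k \to \infty$, where $|\cdot|$ discards the tie-breaking symbol. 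This follows by applying Theorems~\ref{above the threshold} and~\ref{below the threshold} to the finite game $G_v$: for each $\epsilon > 0$, the former gives $\limsup_k |f(G_v,k)|/k \le R(G_v)+\epsilon$ and the latter gives $\liminf_k |f(G_v,k)|/k \ge R(G_v)-\epsilon$.

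The heart of the argument is the comparison: if $R(G_w) < R(G_{w'})$, then $|f(G_w,k)| < |f(G_{w'},k)|$, and hence $f(G_w,k) < f(G_{w'},k)$ in the order $0 < 0^* < 1 < \cdots$, for all sufficiently large $k$. Indeed, choosing $\epsilon < (R(G_{w'}) - R(G_w))/2$, the asymptotics give $|f(G_w,k)| \le (R(G_w)+\epsilon)k < (R(G_{w'})-\epsilon)k \le |f(G_{w'},k)|$ once $k$ is large. Now fix a position $v$ from which Alice may move, and call a move to $w$ stable if $R(G_w) = R_A(G_v)$, the minimum of $R(G_w)$ over Alice's moves. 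Let $w_0$ be a stable move minimizing $f$ among stable moves. For any non-stable move $w'$ we have $R(G_{w'}) > R_A(G_v) = R(G_{w_0})$, so the comparison gives $f(G_{w'},k) > f(G_{w_0},k)$ for large $k$; thus no non-stable move attains the minimum $f_A(G_v,k)$, which is therefore attained at a stable move. The identical argument with $\min$ replaced by $\max$ and the inequalities reversed shows that $f_B(G_v,k)$ is attained at a stable move for Bob.

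Finally I would use finiteness to make the bound on $k$ uniform: there are only finitely many positions and finitely many moves from each, so finitely many pairs $(w,w')$ enter the comparison and a single threshold $K$ works for all of them. For $k = a+b > K$, the recipe ``bid as prescribed by Theorem~\ref{discrete recursion}, and move to a stable position achieving the optimal $f$'' yields optimal strategies that are stable for both players; since the total number of ordinary chips is conserved throughout play, this same $k$ governs every position encountered, so both $G(a^*,b)$ and $G(a,b^*)$ are stable. The delicate case, and the real content of the theorem, is that of ties in the Richman value, where several moves are simultaneously stable: all of them count as stable, so one is free to pick among them the one optimizing $f$, and the comparison guarantees that this stable choice is not beaten by any non-stable move once $k$ is large. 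The one step needing genuine care is extracting the clean asymptotic $|f(G_v,k)|/k \to R(G_v)$ from the earlier threshold theorems.
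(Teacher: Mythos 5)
Your approach is genuinely different from the paper's: rather than arguing directly with strategies, you extract the asymptotic $|f(G_v,k)|/k \to R(G_v)$ from Theorems~\ref{above the threshold} and~\ref{below the threshold} and then compare discrete thresholds move by move. The first three steps are sound: the asymptotic does follow as you say (finiteness of $G$ is needed so that Theorem~\ref{below the threshold} applies to each $G_v$), the strict separation $f(G_w,k) < f(G_{w'},k)$ whenever $R(G_w) < R(G_{w'})$ and $k$ is large is correct, and finiteness gives a uniform $K$. This route is attractive because it would prove something a priori stronger than the paper does: for $k > K$, \emph{every} move attaining $f_A$ or $f_B$ is stable. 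The paper instead uses a punishment argument --- if Alice's chip proportion is not already comfortably above $R(G)$, Bob bids strictly between $\Delta - \delta/2$ and $\Delta - \delta/4$, where $\delta$ is the Richman-value discrepancy of Alice's unstable move, and thereby defeats her with a stable strategy --- which only produces \emph{some} stable optimal strategy but never touches the discrete recursion.

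The gap is in your last step, where you invoke Theorem~\ref{discrete recursion} to conclude that ``bid as prescribed and move to an $f$-minimizer'' is an optimal strategy. The constructive content of that theorem --- that the indicated bid together with an $f$-minimizing move preserves a winning position --- is proved only for bounded games, by induction on length. A finite game in the sense of the present theorem may contain directed cycles (Tug o' War, Ultimatum), and then a strategy that always moves to an $f(G_w,k)$-minimizer maintains the invariant ``Alice holds at least $f$ of the current position'' but need not ever reach a terminal vertex: two moves can both attain $f_A(G_v,k)$ while only one of them makes progress toward an actual win, and both can be stable, so your comparison cannot be used to discard the cycling one. Hence the recipe you write down is only known to be non-losing, not optimal. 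To close the gap you must show that some genuinely optimal strategy --- for instance the one inherited from a truncation $G[n]$ with $n$ large enough that $f(G_u[m],k) = f(G_u,k)$ for all positions $u$ and all truncation levels $m$ encountered along the strategy tree, which is guaranteed to win within $n$ moves --- moves only to $f(G_w,k)$-minimizers; your comparison then shows that strategy is stable. This is fixable, but it is exactly the bounded-versus-finite distinction that makes the theorem nontrivial, so it cannot be left implicit.
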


\begin{proof}
Assume the total number of chips is large.  We will show that either 
\begin{enumerate}
\item Alice has a stable winning strategy, or
\item Any unstable strategy for Alice can be defeated by a stable strategy for Bob.
\end{enumerate}
A similar argument shows that either Bob has a stable winning strategy, or any unstable strategy for Bob can be defeated by a stable winning strategy for Alice, and the theorem follows.

Suppose Alice follows an unstable strategy that calls for her to move to a position $w$ such that $R(G_w)$ is not as small as possible.  Let $\delta$ be the discrepancy
\[
\delta = R(G_w) - R(G_{w_0}),
\]
where $w_0$ is a position that Alice could have moved to such that $R(G_{w_0})$ is as small as possible.  If Alice's proportion of the bidding chips is greater than $R(G) + \delta/4$, then she has a stable winning strategy, by Theorem~\ref{above the threshold}.  Therefore, we may assume that Alice's proportion of the bidding chips is at most $R(G) + \delta/4$.

Suppose that the position is not zugzwang, so Bob is bidding for the right to move.  Since the total number of chips is large, Bob can make a bid that is strictly between $\Delta - \delta/2$ and $\Delta - \delta/4$, where $\Delta = R(G) - R(G_{w_0})$ is the optimal real-valued bid.  Then, if Alice wins the bid and moves to $w$, she finds herself with a proportion of chips that is less than $R(G_w) - \delta/4$ and hence Bob has a stable winning strategy (since the number of chips is large).  Otherwise, Bob wins and moves to a position $w'$ such that $R(G_{w'})$ is as large as possible.  Then his proportion of the chips is greater than $R(\overline G_{w'}) + \delta/4$, so again he has a stable winning strategy.

The proof when the position is zugzwang is similar except that Bob should bid between $\Delta + \delta/4$ and $\Delta + \delta/2$ to force Alice to move.
\end{proof}

\subsection{Periodicity} \label{periodicity section}

Here we prove a periodicity result for finite stable games that allows one to determine the outcome of $G$ for all possible chip counts for Alice and Bob by checking only a finite number of cases.  We use this result extensively in our analysis of specific bidding games in Sections \ref{examples section} and \ref{TTT section}. 

Fix a finite game $G$.  Choose a positive integer $M$ such that $M \cdot R(G_v)$ and $M \cdot \Delta_v$ are integers for all positions $v$ in $G$.  For instance, one can take $M$ to be the least common denominators of $R(G_v)$ and $\Delta_v$ for all $v$.  Let 
\[
m = M \cdot R(G) \mbox{ \ \  and \ \ } m_v = M \cdot R(G_v).
\]
Similarly, let $\overline m = R(\overline G)$ and $\overline m_v = R(\overline G_v)$.

\begin{theorem}  \label{periodicity for Alice}
If Alice has a stable winning strategy for $G(a^*,b)$ then she also has a stable winning strategy for $G(a + m^*, b + \overline m)$.  Similarly, if Alice has a stable winning strategy for $G(a,b^*)$ then she also has a stable winning strategy for $G(a + m, b + \overline m^*)$.
\end{theorem}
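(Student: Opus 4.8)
The plan is to prove the first statement by transferring Alice's stable winning strategy for $G(a^*,b)$ into a stable winning strategy for $G(a+m^*, b+\overline m)$; the second statement then follows by the identical argument with the roles of the starred player interchanged. Throughout, Alice runs a private ``shadow'' copy of the original game $G(a^*,b)$ alongside the real game $G(a+m^*,b+\overline m)$, keeping both at the same position $v$ after each round, and maintaining the invariant that in the real game she holds at least $m_v$ more chips than in the shadow game. Since $G$ is finite, $R(G_v)+R(\Gbar_v)=1$, so $m_v + \overline m_v = M$, and the invariant forces Bob to hold at most $\overline m_v$ more chips in the real game. Because $m_v \ge 0$, and because the monotonicity Lemmas~\ref{* is an advantage} and~\ref{1>*} continue to hold for stable winning strategies (as noted before the theorem), whenever this invariant is strict Alice may simply imagine discarding her surplus; it therefore suffices to preserve it as an equality in the generic case and to let any slack fall in her favor.

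The engine of the transfer is a bid translation. At a position $v$ where her shadow strategy prescribes a bid $\beta$ (possibly using the tie-breaking advantage), Alice bids $\beta + M\Delta_v$ in the real game, making the same use of the tie-breaking advantage; note $M\Delta_v \in \N$ by the choice of $M$. She reads Bob's real bid $\gamma$ as the shadow bid $\gamma - M\Delta_v$. Because both bids are shifted by the same integer, the same player wins each round, ties map to ties, and the tie-breaking advantage is spent identically in both games. When the winner makes a stable move---Alice to a position $w_0$ with $R(G_{w_0})$ minimal, or Bob to a $w'$ with $R(G_{w'})$ maximal---the defining relations $R(G_{w_0}) = R(G_v) - \Delta_v$ and $R(G_{w'}) = R(G_v) + \Delta_v$, together with their zugzwang analogues, give $m_{w_0} = m_v - M\Delta_v$ and $m_{w'} = m_v + M\Delta_v$. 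A direct accounting of the chips paid then shows the chip discrepancy updates precisely from $m_v$ to $m_{w_0}$, respectively to $m_{w'}$, so the invariant is preserved. Because Alice copies the stable moves of the shadow strategy, the resulting real-game strategy is again stable, and since the shadow strategy reaches an Alice-winning terminal position in finitely many moves---reducing to a bounded truncation $G[n]$ as in the proof of Theorem~\ref{above the threshold} if necessary---so does the real strategy.

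The main obstacle is the legality of the translated bids and the bookkeeping at the boundary of the $\N \cup \N^*$ arithmetic. Two things can go wrong: Bob may bid more in the real game than the shadow can absorb, which happens exactly when $R_B(G_v) < 1$ so that his surplus exceeds $M\Delta_v$ and the shadow bid $\gamma - M\Delta_v$ would exceed his shadow holdings; or Bob may bid so little that $\gamma - M\Delta_v < 0$. In the first case Bob overpays relative to the synchronized exchange and Alice emerges with strictly more than $m_{w'}$ surplus chips, while in the second she wins the round outright and may retain a tie-breaking advantage that the shadow game spends. In both situations the discrepancy moves only in Alice's favor, so after clamping the shadow bid to a legal value and invoking the monotonicity lemmas to discard the surplus, Alice returns to the invariant and continues. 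Verifying that these clamped shadow plays remain consistent with a winning continuation of the shadow strategy---which holds because a winning strategy wins against every legal opponent bid, including the clamped one---is the delicate point; once it is checked, the induction closes and the theorem follows.
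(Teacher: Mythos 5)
Your argument is essentially the paper's own proof: the paper likewise shifts Alice's bid by $M\Delta$ at each position and argues by induction on the length of the game that, whether or not she wins the bid, she arrives at each position $v$ with at least $m_v$ surplus chips over a run of her original stable strategy --- exactly your shadow-game invariant. If anything, you are more explicit than the paper about the boundary cases (translated bids that are illegal in the shadow game, ties resolved differently in the two games), which the paper dispatches in a single sentence.
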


\begin{proof}
Since $G$ is finite, Alice's stable winning strategy for $G(a^*,b)$ is guaranteed to succeed in some fixed number of moves. If the game starts at a winning position for Alice, then the theorem is vacuously true, so we proceed by induction on the number of moves.

Suppose Alice has a stable winning strategy for $G(a^*, b)$ in which she bids $k$ for the first move.  Then Alice can win $G(a + m^*, b+ \overline m)$ by bidding $k + M \Delta$ for the first move, and moving according to her stable strategy for $G(a^*,b)$.
Regardless of whether she wins the bid, Alice ends up in a position $v$ where, compared to her stable strategy for $G(a^*,b)$, she has at least  $m_v$ additional chips and Bob has at most $\overline m_v$ additional chips.  By induction, Alice has a stable winning strategy starting from $v$ that is guaranteed to win in a smaller number of moves, and the result follows.

The proof for the situation where Bob starts with tie-breaking advantage is similar.
\end{proof}

\begin{theorem} \label{periodicity for Bob}
If Bob has a stable strategy to prevent Alice from winning $G(a^*,b)$ then he also has a stable strategy to prevent Alice from winning $G(a + m^*, b + \overline m)$.  Similarly if Bob has a stable strategy to prevent Alice from winning $G(a,b^*)$, then he also has a stable strategy to prevent Alice from winning $G(a + m, b + \overline m^*)$.
\end{theorem}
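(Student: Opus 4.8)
The plan is to follow the structure of the proof of Theorem~\ref{periodicity for Alice}, but with one essential change: Bob's strategy to prevent Alice from winning need not terminate, since in a finite game Bob may keep Alice out of all her winning positions forever, forcing an infinite drawing play. Consequently the induction on the number of moves that drives the proof of Theorem~\ref{periodicity for Alice} is unavailable, and I would instead maintain an invariant throughout a play of arbitrary length. As usual it suffices to treat the first assertion, in which Alice keeps the tie-breaking advantage; the case where Bob holds it is identical with the obvious changes. Recall that $M$ was chosen so that $m_v = M\cdot R(G_v)$ and $M\Delta_v$ are integers for every position $v$, and that since $G$ is finite we have $R(\overline G_v) = 1 - R(G_v)$, so that $\overline m_v = M - m_v$ and in particular $m + \overline m = M$.

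The device is to have Bob run a \emph{shadow} copy of $G(a^*,b)$ alongside the actual game $G(a+m^*,b+\overline m)$. In the shadow game he follows his given stable preventing strategy; in the actual game, whenever that strategy calls for a bid of $k$ at a position $v$, he bids $k + M\Delta_v$ and, if he wins the bid, moves to the same stable position $w'$ maximizing $R(G_{w'})$ that his shadow strategy selects. I would maintain the invariant that the two games always sit at a common position $v$ and that, at $v$, Bob's actual chip count exceeds his shadow chip count by at least $\overline m_v$. Since the actual game carries exactly $M = m_v + \overline m_v$ more chips than the shadow game, this is equivalent to saying that Alice's actual count exceeds her shadow count by at most $m_v$. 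At the starting position $v_0$ the invariant holds with equality, because $m_{v_0} = m$ and $\overline m_{v_0} = \overline m$.

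The heart of the matter is that one round of bidding preserves the invariant, and here the bookkeeping is exactly the single-move computation in the proof of Theorem~\ref{periodicity for Alice}, with the roles of Alice and Bob---and of $m_v$ and $\overline m_v$---interchanged: padding Bob's bid by $M\Delta_v$ guarantees that, whoever wins the bid and whatever Alice does, the resulting position $w$ satisfies the invariant with $\overline m_w$ in place of $\overline m_v$, after assigning the shadow game a consistent bid and outcome. Legality of the padded bid is automatic because $M\Delta_v \le \overline m_v$, an inequality that reduces to $|R_B(G_v) - R_A(G_v)| \le 2 - R_A(G_v) - R_B(G_v)$ and hence to the fact that $R_A(G_v), R_B(G_v) \in [0,1]$. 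Because the stable move keeps the two games synchronized and Bob's shadow strategy is stable, the actual strategy is stable as well. Finally, since Bob's shadow strategy never lets Alice reach one of her winning terminals and the two games occupy the same position after every move, Alice never reaches a winning terminal in the actual game either; thus Bob's padded strategy is a stable strategy preventing Alice from winning $G(a+m^*,b+\overline m)$.

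The main obstacle is precisely this possibility of infinite play: unlike a winning strategy in a finite game, a preventing strategy may cycle indefinitely, so there is no decreasing move count on which to induct, and one cannot simply dualize Theorem~\ref{periodicity for Alice}---the duality sends ``Bob prevents'' to ``Alice prevents,'' not to ``Alice wins,'' so it offers no reduction to the already-proved statement. Recasting the argument as an invariant coupled to a shadow game removes the difficulty, since the coupling is valid for plays of any length. The only remaining labor is the case analysis verifying the one-move invariant---comparing Alice's actual bid against $k + M\Delta_v$ and accounting for the tie-breaking advantage in $\N \cup \N^*$---which runs parallel to Theorem~\ref{periodicity for Alice} with all chip inequalities reversed in Bob's favor.
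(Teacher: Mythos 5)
Your proposal is correct, and its core computation---padding Bob's bid by $M\Delta_v$ and tracking the chip surplus against $\overline m_v$ at each position---is exactly the bookkeeping the paper intends when it dismisses this proof as ``similar to the proof of Theorem~\ref{periodicity for Alice}.'' Where you genuinely depart from the paper is in the global structure, and the departure is a real improvement: the paper's proof of Theorem~\ref{periodicity for Alice} is an induction on the number of moves in which Alice's stable winning strategy is guaranteed to succeed, and that induction has no literal analogue here, since a preventing strategy in a finite game may cycle forever (as in Tug o' War) and, as you note, passing to $\Gbar$ turns ``Bob prevents'' into ``Alice prevents'' rather than ``Alice wins,'' so no dualization rescues the induction. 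Your shadow-game coupling with the invariant that Bob's actual surplus at position $v$ is at least $\overline m_v$ is the right replacement: it is preserved by the same single-move calculation (using $\overline m_v \pm M\Delta_v \ge \overline m_w$, which follows from $R(G_w)\ge R_A(G_v)$ for Alice's moves and $R(G_{w'})=R_B(G_v)$ for Bob's stable move, together with the legality bound $M\Delta_v\le\overline m_v$), and it survives plays of arbitrary length, which is what the statement actually requires. The paper's one-line proof buys brevity at the cost of hiding this point; your version makes the argument airtight, with only the routine tie-breaking case analysis (mirroring Alice's actual bid into a legal shadow bid in $\N\cup\N^*$) left to the reader, exactly as in the paper's own proofs.
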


\begin{proof}
Similar to proof of Theorem~\ref{periodicity for Alice}.
\end{proof}

Using these periodicity results, we can determine the exact set of chip counts for which Alice can win $G$ by answering the following two questions for finitely many $x$ and $y$ in $\N \cup \N^*$.
\begin{itemize}
\item If Alice starts with $x$ chips then how many chips does Bob need to prevent her from winning?
\item If Bob starts with $y$ chips then how many chips does Alice need to win?
\end{itemize}
There is a unique minimal answer in $\N \cup \N^*$ to each such question, and the answer is generally not difficult to find if $G$ is relatively simple and $x$ or $y$ is small.  Furthermore, by Theorem~\ref{stable for many chips}, there is an integer $n$ such that $G(a^*,b)$ and $G(a,b^*)$ are stable provided that $a + b$ is at least $n$.  Then, if we know the answers to the above questions for $x < m + n$ and $y < \overline m + n$, we can easily deduce whether Alice wins for any given chip counts using Theorems~\ref{periodicity for Alice} and \ref{periodicity for Bob}.

\vspace{5 pt}

We conclude this section with some open problems that ask to what extent, if any, these results extend from finite games to locally finite games.  For fixed $\epsilon > 0$ and a large number of chips, by Theorem~\ref{above the threshold} we know that Alice has a stable winning strategy if her chip count is at least $R(G) + \epsilon$ and Bob has a winning strategy if his proportion of the chips is at least $R(\overline G) + \epsilon$.  However, if $R(G) + \epsilon$ is less than $1 - R(\overline G) - \epsilon$, then there is a gap where the outcome is unclear, even when the number of chips is large.

\begin{problem} \label{in the draw range}
Is there a locally finite game $G$ and a positive number $\epsilon$ such that Alice has a winning strategy for infinitely many chip counts $G(a^*,b)$ such that $a/(a+b)$ is less than $R(G) - \epsilon$?
\end{problem}

\noindent Roughly speaking, Problem~\ref{in the draw range} asks whether strategies to force a draw in a locally finite game with real-valued bidding can always be approximated sufficiently well by discrete bidding with sufficiently many chips.  However, it is not clear whether one should follow stable strategies in locally finite games with large numbers of chips.

\begin{problem}\label{eventually Richman}
If $G$ is locally finite, are $G(a^*,b)$ and $G(a,b^*)$ stable for $a$ and $b$ sufficiently large?
\end{problem}

If the answer to Problem~\ref{in the draw range} is negative, then the answer to Problem~\ref{eventually Richman} is negative as well.  To see this, suppose Alice wins a game $G$ with a proportion of chips less than $R(G)-\epsilon$ and an arbitrarily large total number of chips.  Let $H$ be a stable game with Richman value between $R(G)-\epsilon$ and $R(G)$ (which is not difficult to construct), and let $G \wedge H$ be the game in which the first player to move gets to choose between the starting position of $G$ and the starting position of $H$.   In $G \wedge H$, Alice's optimal first move for large chip counts would be to move to the starting position for $G$, while her stable strategy (i.e.\ her optimal strategy for real-valued bidding) would be to move to the starting position for $H$.  In the Appendix, we show that the answer to Problem~\ref{eventually Richman} is negative for different tie-breaking methods.  

\section{Examples}\label{examples section}

In this section, we analyze discrete bidding play for two simple combinatorial games, Tug o' War and what we call \emph{Ultimatum}.   We use these examples to construct games with strange behavior under discrete bidding play.

\subsection{Tug o' War} \label{tug o war}

The Tug o' War game of length $n$, which we denote by $\Tug^n$, is played on a path of length $2n$, with vertices labeled $-n, ..., -1, 0 , 1, ..., n$, from left to right. The game starts at the center vertex, which is marked $0$.  Adjacent vertices are connected by edges of both colors in both 
directions.  Alice's winning position is the right-most vertex of the path, and Bob's winning position is the left-most vertex of the path.  So Alice tries to move to the right, Bob tries to move the left, and the 
winner is the first player to reach the end.  In particular, Tug o' War is stable, since the optimal moves do not depend on whether bidding is discrete or real-valued, and since it is also symmetric the critical threshold is $R(\Tug^n) = 1/2$.  

\begin{proposition}  \label{tug o war result}
Suppose Bob's total number of chips is less than $n$.  Then Alice wins $\Tug^n$ if and only if her total number of chips is at least $(n-1)^*$.
\end{proposition}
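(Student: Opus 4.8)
The plan is to pin down the two extremal chip configurations on the boundary of Alice's winning region and reduce everything else to them by monotonicity. First I would record that, by Lemma~\ref{* is an advantage}, Lemma~\ref{1>*}, and the elementary monotonicity noted at the start of Section~\ref{general theory} (Alice may always pretend an extra chip of her own is absent, or that a missing chip of Bob's is still missing), the set of winning configurations is upward closed in Alice's holdings and downward closed in Bob's. Consequently it suffices to treat two extreme cases: \textbf{(a)} Alice wins $\Tug^n((n-1)^*, n-1)$, which by upward/downward monotonicity yields a win for every Alice holding $\geq (n-1)^*$ against any Bob with fewer than $n$ chips; and \textbf{(b)} Alice does \emph{not} win $\Tug^n(n-1, 0^*)$, which by the same monotonicity yields a non-win for every Alice holding at most $n-1$ (note that such a holding carries no tie-breaking advantage, so Bob holds it). Since the path is symmetric, reflecting it exhibits $\overline{\Tug^n}\cong\Tug^n$ with Alice and Bob interchanged; under this reflection case \textbf{(b)} becomes an instance of the ``prevention'' statement for the reflected game, so the two directions are mirror images and can be handled by one argument.

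For the quantitative core I would set $f_i = f(\Tug^n_{v_i},k)$ at the position $v_i$ labeled $i$, and run the discrete Richman calculus of Theorem~\ref{discrete recursion}. Because $f_i$ is monotone in $i$ (provable by a short induction: being closer to $+n$ can only help Alice), Alice's optimal move pushes right and Bob's pushes left, so $f_A = f_{i+1}$ and $f_B = f_{i-1}$ at every interior $i$, with $f_n = 0$ and $f_{-n} = k+1$. As $\Tug^n$ is locally finite but unbounded, I would work with the truncations $\Tug^n[N]$ and use that $f(\Tug^n,k)=\lim_N f(\Tug^n[N],k)$ and that Alice wins $\Tug^n$ iff she wins some $\Tug^n[N]$. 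The target is then to prove, by induction driving the recursion, the linear-in-$i$ shape of $f_i$ suggested by the real-valued values $R(G_{v_i}) = (n-i)/(2n)$; specializing to $i=0$ in the relevant band of totals ($k$ between $n-1$ and $2n-2$) should give $f_0 \leq (n-1)^*$ for case \textbf{(a)} and the matching strict lower bound $f_0 > n-1$ for case \textbf{(b)}. I would cross-check the induction against the base case $n=2$, where the recursion stabilizes at $(f_{-1},f_0,f_1)=(2^*,1^*,0^*)$, confirming the threshold $(n-1)^*=1^*$ at the center.

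As an explicit companion to the recursion, I would describe Alice's winning strategy for \textbf{(a)} through the invariant ``when the token sits at $i$ and it is time to bid, Alice holds at least $(n-1-i)^*$,'' maintained by bidding the smallest positive amount available, namely $0^*$ when she holds the tie-breaking advantage and $1$ otherwise, and by declining or accepting ties as the invariant dictates. The main obstacle, and the delicate point throughout, is the behavior in this ``unstable'' small-chip regime, where rounding and the tie-breaking advantage dominate: the bookkeeping of the advantage through tied bids---recalling that the player who takes a tie must \emph{cede} the advantage (Proposition~\ref{use the advantage}), which is exactly what separates the winning value $(n-1)^*$ from the losing value $n-1$---must be tracked with care, and a naive ``stay exactly on threshold'' strategy can cycle and merely force a draw. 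Ruling out such infinite draws is the crux; I would close that gap by passing to the truncations above, equivalently by exhibiting the least number of moves in which Alice can force a win as a strict monovariant along her play, which guarantees that the token actually reaches $+n$ in finitely many moves.
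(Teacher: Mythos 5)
Your third paragraph contains the right idea: the invariant ``at vertex $i$ Alice holds at least $(n-1-i)^*$'' is exactly the paper's potential function (the paper phrases it as the \emph{weight}, vertex number plus Alice's chip count with the tie-breaking advantage counted as one chip, never dropping below $n$), and your bid prescription ($0^*$ with the advantage, $1$ without) is the paper's strategy verbatim. But the two load-bearing steps are left as declared intentions rather than arguments. First, you correctly identify that maintaining the invariant only shows Bob cannot win, not that Alice does, and that ruling out an infinite draw is the crux --- but your proposed fix is circular: you cannot use ``the least number of moves in which Alice can force a win'' as a monovariant while the existence of a forced win is precisely what is in question, and ``passing to the truncations'' does not by itself produce the bound. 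What is actually needed (and what the paper supplies) is a second, strictly increasing quantity: every time Bob stops Alice he must pay either two chips, or one chip plus the tie-breaking advantage, or exactly one chip against her bid of $0^*$; the first two strictly increase the weight, and the third strictly depletes Bob's finite supply without being replenished, since under Alice's strategy she never pays him more than one chip (or the bare advantage) per move. Hence the weight eventually exceeds any bound and Alice reaches $+n$.

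Second, the negative direction is not established. Verifying that the claimed values form a fixed point of the recursion in Theorem~\ref{discrete recursion} (as in your $n=2$ check) does not identify $f(\Tug^n_{v_0},k)$, because for an unbounded game the correct value is the limit of the truncated values $f(\Tug^n[N]_{v_0},k)$, which is \emph{some} fixed point but a priori not the one you wrote down; a lower bound on that limit amounts to exhibiting a drawing strategy for Bob, which your proposal never does. (The paper's is one line: Bob bids zero forever and uses the tie-breaking advantage whenever possible, so the weight never increases and never reaches $n$.) Relatedly, your monotonicity reduction for this direction is incomplete: the holdings below $(n-1)^*$ include $(n-2)^*$, which \emph{does} carry the tie-breaking advantage, and the implication ``Alice does not win $G(a+1,b^*)$ $\Rightarrow$ Alice does not win $G(a^*,b)$'' that would reduce this case to $\Tug^n(n-1,0^*)$ is false in general --- Example~\ref{bid zero example} is a counterexample --- so this family of chip counts must be handled directly, as the uniform weight argument does.
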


\begin{proof}
We define the weight of a position in the bidding game to be the number of the current vertex plus the number of chips that Alice has, including the tie-breaking chip.  Note that, in order to reach a winning position, Alice must first reach a position of weight at least $n$.

Suppose Alice's chip total is at most $n-1$.  Then Bob can force a draw by bidding zero every time, and using the tie-breaking advantage to win whenever possible.  Indeed, if Bob does this, then the weight of the position in the game never exceeds its starting value, so Alice cannot win.

Suppose Alice's chip total is at least $(n-1)^*$.  Then Alice can win with the following strategy.  Whenever she has the tie-breaking advantage, she bids zero and uses the advantage if possible.  Whenever she does not have the tie-breaking advantage, she bids one.  With this strategy, the weight of the position never decreases, and it follows that Bob cannot win.  Bob may win the first several moves, but eventually he will run out of chips, and Alice will win a move for zero, using the tie-breaking advantage.  Then Alice may win a certain number of moves for one chip each.  Since the weight of the position is at least $n$, if Bob lets her win moves for one chip indefinitely, then Alice will win.  So eventually Bob must bid either two chips or one plus the tie-breaking advantage, and the weight of Alice's position increases by one.  It follows that Alice can raise the weight of her position indefinitely, until eventually she must win. 
\end{proof}

The Richman game version of Tug o' War was studied in \cite[p.\ 252]{LLPSU}, where the critical threshold of the vertex labeled $k$ was determined to be $(k + n) / 2n$.  Therefore, the periodicity results of Section~\ref{periodicity section} hold with $M = 2n$ and with $m$ and $\overline m$ both equal to $n$.  The cases covered by Proposition~\ref{tug o war result} then completely determine the outcome of Tug o' War for all possible chip counts. 

\begin{corollary}
Let $a$, $b$, $k$, and $k'$ be nonnegative integers, with $a$ and $b$ less than $n$.  Then Alice wins $\Tug^n(kn + a, (k'n + b)^*)$ if and only if $k$ is greater than $k'$.  Furthermore, Alice wins $\Tug^n(kn + a^*, k'n + b)$ if and only if either $k$ is greater than $k'$ or $k$ is equal to $k'$ and $a$ is equal to $n-1$.
\end{corollary}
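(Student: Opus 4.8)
The plan is to reduce every distribution of chips, via the periodicity theorems, to one of the configurations already handled by Proposition~\ref{tug o war result}. Since $\Tug^n$ is stable and $m = \overline m = n$, Theorems~\ref{periodicity for Alice} and \ref{periodicity for Bob} together show that adding $n$ chips to each player simultaneously, while leaving the tie-breaking advantage with the same player, does not change the outcome: Theorem~\ref{periodicity for Alice} propagates ``Alice wins'' upward and Theorem~\ref{periodicity for Bob} propagates ``Bob prevents Alice from winning'' upward, and since exactly one of these holds for any configuration, the two outcomes are preserved exactly. Thus the outcome of $\Tug^n(kn+a, (k'n+b)^*)$ equals that of $\Tug^n((k-1)n+a, ((k'-1)n+b)^*)$ whenever $k,k'\geq 1$, and likewise when Alice holds the tie-breaking advantage. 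I would therefore subtract $\min(k,k')$ copies of $(n,n)$ from both players, keeping the star with the same player, so that one of the two players is left with fewer than $n$ ordinary chips.

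When $k \geq k'$ this reduction leaves Bob with $b < n$ chips, so Proposition~\ref{tug o war result} applies directly: Alice wins if and only if her total, as an element of $\N \cup \N^*$, is at least $(n-1)^*$. In the first statement her reduced total is the integer $(k-k')n + a$, which is at least $(n-1)^*$ exactly when $k > k'$, since when $k=k'$ it equals $a \leq n-1 < (n-1)^*$. In the second statement her reduced total is $((k-k')n + a)^*$, which is at least $(n-1)^*$ precisely when $k > k'$ or when $k = k'$ and $a = n-1$, because under $a \leq n-1$ the inequality $a^* \geq (n-1)^*$ forces $a = n-1$. These match the claimed conditions, so it remains only to treat $k < k'$.

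The case $k < k'$ is the main obstacle, because after reduction Bob holds $(k'-k)n + b \geq n$ chips and Proposition~\ref{tug o war result} no longer applies to him. Here I would exploit the role-reversal symmetry of Tug o' War: reflecting the path by $j \mapsto -j$ identifies $\overline{\Tug^n}$ with $\Tug^n$, so Bob has a winning strategy in $\Tug^n(X;Y)$ if and only if Alice has a winning strategy in $\Tug^n(Y;X)$. Applying this swap moves the few-chip player into Bob's seat: in the reflected game the real Bob holds only $a < n$ chips, while Alice holds $(k'-k)n+b \geq n$ chips, which is at least $(n-1)^*$. Proposition~\ref{tug o war result} then gives Alice a win in the reflected game, hence Bob a win in the original, and in particular Alice does not win. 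This disposes of $k < k'$ in both statements and completes the argument. The only delicate bookkeeping is tracking the tie-breaking star through the $\N \cup \N^*$ comparisons and through the role reversal, together with the observation that here ``Alice does not win'' is witnessed by an explicit winning strategy for Bob rather than merely a draw.
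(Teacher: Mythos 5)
Your proof is correct and follows essentially the same route as the paper, which derives this corollary by combining the periodicity results (with $M=2n$ and $m=\overline m=n$) with Proposition~\ref{tug o war result}; your use of the two periodicity theorems together with stability to get exact preservation of the outcome under adding $(n,n)$ is the right way to make that reduction rigorous. The only point where you go beyond what the paper states explicitly is the $k<k'$ case, where you invoke the reflection symmetry $\overline{\Tug^n}\cong\Tug^n$ to apply the proposition with the roles reversed --- a step the paper leaves implicit but which is needed and which you carry out correctly.
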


Tug o' War game is perhaps the simplest game that is not bounded, and yet we can use it to construct some interesting examples of bidding game phenomena for games that are not locally finite.

\begin{proposition}
Let $G$ be the game in which the first player to move can go to the starting position of $\Tug^n$ for any $n$.  Then $R(G) = 1/2$, but $G(3a, a^*)$ is a draw for all values of $a$ greater than one.
\end{proposition}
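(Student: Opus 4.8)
The plan is to treat the two assertions separately. Write $v_0$ for the root of $G$; every legal move from $v_0$ leads to the center of some $\Tug^n$, and since $\Tug^n$ is symmetric with real-valued threshold $1/2$ at its center and the set of available moves is the same for both players, $G$ is symmetric with $\Gbar = G$. The game is not locally finite, so Theorem~\ref{above the threshold} does not apply at $v_0$; this is exactly what makes the example possible.

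For $R(G) = 1/2$ I would argue directly, in both directions. Suppose Alice's proportion of the resources exceeds $1/2$. Her strategy is to bid $0$ on the first move. No payment can decrease her proportion (if Bob wins the bid he only pays her), so whoever ends up moving, the game arrives at the center of some $\Tug^n$ with Alice still holding more than half the resources; since the center of $\Tug^n$ has real-valued threshold $1/2$, she then wins that $\Tug^n$. Hence Alice wins $G$ and $R(G) \le 1/2$. Running the identical argument for Bob, using $\Gbar = G$, shows that Bob wins whenever his proportion exceeds $1/2$, so Alice does not win when her proportion is below $1/2$, giving $R(G) \ge 1/2$.

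For the draw I would first show Bob can stop Alice from winning $G(3a,a^*)$. The key point is that whoever wins the bid at $v_0$ chooses $n$, and can choose it enormous: if Bob ever wins that opening bid he moves to the center of $\Tug^n$ with $n \ge 4a+2$, and since Alice then holds at most $4a < n-1$ chips, Proposition~\ref{tug o war result} shows she cannot win. Thus, to have any chance, Alice must win the opening bid herself. But Bob holds $a$ chips together with the tie-breaking advantage, so Alice can win the bid only by bidding at least $a+1$; after paying she is left with at most $2a-1$ chips while Bob holds at least $2a+1$ and keeps the tie-breaking advantage. Now Alice has strictly fewer chips than Bob, so whatever $n$ she selects, the corollary to Proposition~\ref{tug o war result} shows she cannot win $\Tug^n$. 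Either way Alice does not win. It remains to show Bob cannot win, and here Alice plays the mirror strategy: she bids $a+1$ on the opening move, which she is guaranteed to win, and then moves to the center of $\Tug^n$ for $n = 4a+1$. With this choice both chip counts $2a-1$ and $2a+1$ lie below $n$ in the same block and the tie-break holder's remainder $2a+1$ differs from $n-1=4a$, so by the corollary to Proposition~\ref{tug o war result}, applied with Alice and Bob interchanged, Bob cannot cross $\Tug^n$ either. Combining the two halves, neither player can force a win, so $G(3a,a^*)$ is a draw.

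The main obstacle is the first-move bookkeeping: one must verify that winning the opening bid costs the leader enough to reverse the chip inequality (Alice goes from $3a$ to at most $2a-1$ against Bob's $2a+1$), and one must track the tie-breaking advantage through ties in order to apply Proposition~\ref{tug o war result} and its corollary with the correct element of $\N \cup \N^*$. The conceptual heart, by contrast, is simple and is precisely the failure of local finiteness: the player who wins the opening bid can name an $n$ so large that the opponent's fixed, finite stock of chips cannot traverse $\Tug^n$, which is why Alice's proportion $3/4 > R(G)$ is nonetheless insufficient for her to win.
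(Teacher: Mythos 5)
Your argument is correct and follows essentially the same route as the paper's proof: Bob bids his entire stack of $a$ chips, and either he wins the opening bid and names $\Tug^n$ with $n$ so large that Alice's at most $4a$ chips cannot reach $(n-1)^*$, or Alice pays at least $a+1$ to win the bid and arrives at some $\Tug^n(2a-1,(2a+1)^*)$, which the corollary to Proposition~\ref{tug o war result} shows she cannot win. The only difference is that you also spell out explicitly why Bob cannot win (Alice bids $a+1$ and chooses $n=4a+1$), a direction the paper leaves implicit, and your bookkeeping there is accurate.
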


\begin{proof}
Since all of the possible first moves lead to positions $v$ with $R(G_v) = 1/2$, the critical threshold is $R(G) = 1/2$.   Consider $G(3a, a^*)$ for any $a$.  Bob's strategy to force a draw is as follows.  Bob bets all of his chips for the first move. Then Alice can either bet $a+1$ and win the bet, getting to play any $\Tug^n(2a-1, 2a+1^*)$ which is at best a draw for Alice. Otherwise, Bob wins the bet, and chooses to play $\Tug^n$ for some $n$ greater than $4a + 1$, which leads to a draw by Proposition~\ref{tug o war result}. So Bob can force a draw.
\end{proof}

\noindent This type of behavior, where Bob can force a draw even though Alice's proportion of the chips is much greater than $R(G)$ and the total number of chips is large, is impossible for locally finite games by Theorem~\ref{above the threshold}.

The following example shows that non locally finite games may also be unstable even for large numbers of chips.

\begin{proposition}
Let $G$ be the game in which the first player to move can either play $\Tug^1$, or $\Tug^n$ starting at the node labeled $-1$ for any $n\ge 10$. Then $G(3a^*, 2a)$ fails to be stable for all $a$.
\end{proposition}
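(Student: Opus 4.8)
The plan is to pin down the stable opening moves, prove that the true value of $G(3a^*,2a)$ is (at best for Alice) a draw, and then exhibit an explicit win for Alice against \emph{every} stable strategy of Bob; since Bob can nonetheless force a draw by an unstable move, Bob has no stable optimal strategy and the game is unstable.

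First I would compute the Richman values of the positions one move from the start. Each is a finite Tug o' War position, so its value is unambiguous: the start of $\Tug^1$ is the game $\E$ with value $\tfrac12$, and the node $-1$ of $\Tug^n$ has value $(n-1)/2n$ by \cite{LLPSU}. As $(n-1)/2n$ increases with $n$ and equals $9/20$ at $n=10$, the unique smallest reachable value is $9/20$ (reached only by entering $\Tug^{10}$ at $-1$) and the unique largest is $\tfrac12$ (reached only by entering $\Tug^1$). Hence any stable strategy must send Alice into $\Tug^{10}$ at $-1$ and Bob into $\Tug^1$ whenever they respectively make the first move.

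Next I would show Alice has no winning strategy, by the same mechanism as the previous proposition. Bob bids his whole stack $2a$ on the opening. If Alice outbids him she keeps at most $a$ chips against at least $4a$; in every reachable Tug game her proportion is then at most $1/5<9/20$, so she cannot win (for small $a$ her weight is already below $n\ge 10$, forbidding a win by Proposition~\ref{tug o war result}, and for large $a$ Bob wins by Theorem~\ref{below the threshold}). If Alice instead declines, Bob wins the opening, hands her his $2a$ chips, and escapes into $\Tug^N$ at $-1$ with $N=\max(10,\,5a+2)$; now Alice owns all $5a$ chips but Bob has none to feed her, so by the weight estimate of Proposition~\ref{tug o war result} her weight can never rise from $5a-1$ to $N$, and the game is drawn. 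Thus Bob forces a draw and Alice cannot win.

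Finally I would defeat every stable strategy $\tau$ for Bob. If $\tau$ opens with a positive bid, Alice bids $0$: Bob wins, pays her at least one chip, and then (directly, by stability, or after delegating the move to Alice) the opening lands in $\Tug^1=\E$ with Alice holding at least $3a$ chips. If $\tau$ opens with a bid of $0$, Alice also bids $0$ and uses her tie-breaking advantage to take the move herself for free, entering $\Tug^1$ while keeping $3a$ chips. In either line the total is $5a$ and a direct check against the table for $f(\E,\cdot)$ in Example~\ref{simple tables} gives $3a\ge f(\E,5a)$ for all $a\ge1$, so Alice wins $\E$. Hence every stable strategy for Bob loses, even though the bid-everything strategy above forces a draw; therefore Bob has no stable optimal strategy and $G(3a^*,2a)$ is not stable. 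I expect the draw verification of the middle paragraph---confirming both that the huge-$N$ escape is genuinely a draw and that over-bidding never wins for Alice---to be the main obstacle, since both rely on the weight function and threshold of Proposition~\ref{tug o war result}; the degenerate case $a=0$ may be set aside.
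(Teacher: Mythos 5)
Your overall architecture is the same as the paper's: Bob's unique stable opening move is into $\Tug^1$ (the only option with Richman value $1/2$, versus $(n-1)/2n<1/2$ for the others), every stable strategy for Bob loses because the game then lands in $\E$ with Alice holding at least $3a$ of the $5a$ chips, and Bob nevertheless secures a draw by the unstable strategy of betting his entire stack and, when permitted to move, escaping into $\Tug^N$ at the node $-1$ for $N$ large. Your final paragraph is in fact more explicit than the paper's one-line treatment of why stable play fails for Bob, since you separately handle the zero-bid opening and the possibility that Bob wins the bid but delegates the move.

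The gap is exactly where you predicted: the branch in which Alice overbids the all-in bid and moves herself into $\Tug^n$ at node $-1$ holding roughly $a$ chips against $4a^*$. Your small-$a$/large-$a$ dichotomy does not cover all $a$. The weight bound of Proposition~\ref{tug o war result} forces a draw only when Alice's weight $a-1$ is below $n$, so for a fixed $a\geq 11$ the games $\Tug^{10},\dots,\Tug^{a-1}$ escape it; and Theorem~\ref{below the threshold} only applies once the chip total exceeds a bound depending on the particular finite game, so it gives you nothing for a fixed intermediate value of $a$, nor anything uniform over the finitely many uncovered $n$. Since the proposition asserts instability for \emph{all} $a$, this leaves infinitely many cases unresolved. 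The paper closes this branch with a concrete continuation rather than an asymptotic appeal: after Alice pays $2a^*$ and enters $\Tug^n$ at $-1$ with $a$ plain chips, Bob bids $a^*$, which Alice cannot match, wins the move outright, and retreats to node $-2$, reaching $\Tug^n(2a^*,3a)$, which it then argues is at worst a draw for Bob. Some explicit follow-up bid of this kind is what your middle paragraph needs in order to work for every $a$.
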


\begin{proof}
We claim that Bob has no optimal stable strategy.  For the first move, Bob's only stable strategy is to move to $\Tug^1$, since its Richman value is 1/2 and the Richman values of all $\Tug^n$ starting at $-1$ is less than 1/2. However, if Bob wins the first bid and moves to $\Tug^1$ then he will lose.  Nevertheless, we claim that Bob has a strategy to force a draw.  This strategy is as follows.

Bob bets all of his chips on the first turn. If Alice lets him win the bid, he can move to $\Tug^n$ for $n$ large, which leads to a draw. Otherwise, if Alice bets $2a^*$, she may choose  to play either $\Tug^1(a,4a^*)$ or $\Tug^n(a,4a^*)$ starting from the node labeled $-1$.  If Alice chooses $\Tug^1$ then Bob wins.  Otherwise, Bob can win the next move for $a^*$, leading to $\Tug^n(2a^*, 3a)$, which is at worst a draw for Bob.  Therefore Bob has a nonstable strategy that is better than any stable strategy.
\end{proof}

\subsection{Ultimatum}

We know describe the Ultimatum game of degree $n$, which we denote by $\Ult^n$.  It is played on a directed graph with vertices labeled $B, -n, \ldots, -1, 0 ,1, \ldots, n-1, n, A$.  There are red edges from $0$ to $n$, from $k$ to $A$ for $k > 0$, and from $k$ to $k + 1$ for $k < 0$.  Similarly, there are blue edges from $0$ to $-n$, from $k$ to $B$ for $k < 0$, and from $k$ to $k-1$ for $k > 0$.  The game starts at $0$.  In other words, when the game starts, the first player to move gives the other an ultimatum---the other player must make each of the next $n$ moves (in which case the game reverts to the beginning state), or else lose the game.

Since $\Ult^n$ is finite and symmetric, the critical threshold $R(U^n)$ is $1/2$.

\begin{proposition}
Suppose $b$ is less than $2^n$.  Then Alice has a winning strategy for $\Ult^n(a^*, b)$ if and only if $a$ is greater than $b$, or $a$ is equal to $b$ and $b\neq 2^{n-1} - 1$.
\end{proposition}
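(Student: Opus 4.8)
The plan is to reduce the cyclic game $\Ult^n$ to the analysis of two ``forced walk'' subgames together with a single opening bid. By the ultimatum structure, from the start vertex $0$ the only red move is to $n$ and the only blue move is to $-n$, so after the first bid either Alice has moved to $n$ (and Bob must win each of the next $n$ bids, walking the token $n, n-1, \dots, 0$, or else Alice plays to $A$ and wins) or Bob has moved to $-n$ (the mirror situation). I would first solve these walks in isolation, then glue them together through the opening bid, and finally account for the fact that a completed walk returns the token to $0$ and restarts the game.

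The core is a doubling lemma for the threatening walk. Suppose the token is at position $k>0$, so Bob must win $k$ consecutive bids; say Alice holds $x$ chips together with the tie-breaking advantage, and the total number of chips is $s$. I claim the optimal play is for Alice (the threatener) to bid all-in at every step, forcing Bob to make the minimal overbid needed to survive; this transfers Bob's bid to Alice and replaces $(x,k)$ by $(2x+1,k-1)$, so that the quantity $x+1$ doubles at each step while $k$ decreases. Unwinding the recursion, Alice wins outright (reaches $A$ before the token returns to $0$) precisely when $2^{k}(x+1)\ge s+2$, and otherwise Bob completes the walk, arriving back at $0$ with Alice's holdings increased to exactly $2^{k}(x+1)-1$. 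I would prove this by induction on $k$, with base case $k=1$ (where the condition reads $x\ge y$) and with the two tie-breaking versions handled separately; the version in which the walker holds the advantage is exactly the bidding arithmetic behind Example~\ref{bid zero example}, where the thresholds $2^{n-1}-1$ and $2^{n}-1$ already appear.

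Next I would tame the cycle. The crucial monotonicity is that each completed walk strictly increases the chip count of the non-walking (threatening) player, from $x$ to $2^{k}(x+1)-1$; since the total is fixed and chips are integral, only finitely many restarts can occur, each leaving the position more lopsided, so the game is effectively acyclic and its value is the one predicted by the walk lemma. With this in hand I would analyze the opening bid at $0$ as a one-shot simultaneous game whose payoffs are the walk values just computed: if Alice wins the opening for $p$ she enters the threatening walk with $a-p$ chips against $b+p$, winning it (when she retains the advantage) iff $2^{n}(a-p+1)\ge a+b+2$, whereas if Bob wins for $q$ he threatens Alice from $-n$ having already paid her $q$, so for $a\ge b$ one checks that either Alice survives into a winning restart or Bob's depleted threat fizzles. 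Optimizing over $p$ and over Bob's replies, and using $b<2^{n}$ to bound the walks that can arise, the condition collapses to $a\ge b$.

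The main obstacle, and the origin of the exceptional value, is the precise interaction between the opening bid and the tie-breaking advantage. When $a>b$ Alice can win the opening bid by a strict overbid and keep the advantage, so her ensuing threatening walk has ample slack. When $a=b$ she can only win the opening bid by using the advantage on a tie, which surrenders it to Bob; she therefore threatens from $n$ \emph{without} the advantage, and the advantage-less walk obeys a strictly stronger threshold (the walker may now survive a step by tying rather than overbidding). At $a=b=2^{n-1}-1$, where $s=2^{n}-2$, this stronger threshold is missed by exactly one: Alice reaches $n$ with no chips and no advantage, cannot make a single threatening bid, and Bob walks down unobstructed; every other opening line likewise leads either to a loss or to a symmetric restart that cycles to a draw (as one verifies directly for small $n$). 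I would make this precise by recording, alongside the with-advantage condition $2^{k}(x+1)\ge s+2$, the companion condition for a threatener without the advantage, and then splitting the opening analysis into the cases $a>b$, $a=b\neq 2^{n-1}-1$, and $a=b=2^{n-1}-1$, exhibiting a stable winning strategy for Alice in the first two and a drawing strategy for Bob in the third. The bulk of the work is the tie-breaking bookkeeping threaded through each bid.
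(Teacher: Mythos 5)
Your doubling lemma for the forced walk is correct, and it is exactly the arithmetic the paper uses implicitly (the forced bids $1^*,3,6,12,\dots$ in the case $a>b$ and $0^*,1,2,4,\dots$ in the case $a=b$ are instances of it), so that part of the plan is sound. The first genuine gap is the claim that the game is ``effectively acyclic'' because each completed walk strictly increases the threatener's chip count. The walk increases her holdings relative to what she had \emph{after paying the opening bid}, not relative to the previous visit to $0$, so a full cycle (opening bid plus walk) can return to exactly the same position; the drawn case $a=b=2^{n-1}-1$ is precisely such an exact recurrence (Alice pays $(2^{n-1}-1)^*$ to reach $n$ with nothing, and the walk hands her back exactly $(2^{n-1}-1)^*$), which contradicts ``only finitely many restarts can occur'' and makes your proposal internally inconsistent with the drawing case you later invoke. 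What is actually needed --- and what the paper's proof supplies --- is a verification that Alice's \emph{specific} opening bid recoups strictly more from the ensuing walk than it cost, so that she makes strict progress on each cycle, together with the observation that in the exceptional case Bob can force exact recurrence forever.

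The second gap is your resolution of the main case $a>b$: winning the opening by a strict overbid of $b+1$ does not give ``ample slack.'' Take $a=b+1$ with $b=2^n-1$ (allowed, since $b<2^n$). Alice pays $b+1=2^n$, reaches $n$ with $0$ chips plus the advantage, fails your outright-win test $2^n\cdot 1\ge 2b+3$, and Bob completes the walk leaving her with $(2^n-1)^*=b^*$ at vertex $0$ --- strictly worse than the $(b+1)^*$ she started with, so iterating this strategy loses ground. The paper instead has Alice bid exactly $b$, winning ties with the advantage: she pays at most $b^*$, keeps at least $a-b\ge 1$ chips, and one checks that the (advantage-less) walk then returns $2^n(a-b)+2^{n-1}-1$ chips, which exceeds $a$. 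So the correct opening is a tie, not an overbid, and the per-cycle gain must be computed with the advantage-less doubling you mention only in passing. Finally, disposing of the exceptional case by ``verifying directly for small $n$'' is not a proof; the paper's argument (Bob bids all-in, and whichever player wins the opening, the position recurs exactly with the advantage at worst swapped) works uniformly in $n$ and is the piece you would need to supply.
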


\begin{proof}
Suppose $a$ is greater than $b$.  We claim that Alice can win by bidding $b$ chips on the first move and using the tie-breaking advantage.  Then Alice still has at least one chip left, so Bob must bid at least one chip plus the tie-breaking advantage for the second move, three chips for the third move, and $3 \cdot 2^k$ for move number $k + 3$.  It follows that if Bob is able to make $n$ moves in a row, then Alice receives at least $3 \cdot (2^{n-1} -1)$ chips from Bob before returning to $0$.  In particular, Alice returns to the starting position with strictly more chips than she 
started with, and hence must eventually win.

Suppose $a$ is equal to $b$ and less than $2^{n-1} - 1$.  Then Alice can win by bidding all of her chips on the first move and using the tie-breaking advantage.  Bob must give her the tie-breaking advantage to take the second move, and one chip to take the third move, and $2^k$ chips to take move number $k + 3$.  It follows that if Bob is able to make $n$ moves in a row, then Alice will have collected $2^{n-1} - 1$ chips from Bob by the time they return to the starting position.  Now Alice has more chips than Bob, plus the tie-breaking advantage, so she has a winning strategy.

Suppose $a$ is equal to $b$ and greater than $2^{n-1} -1$. Then Alice bids $a -1$; if Bob bids all of his chips to win, then Alice can pay him $2^{n-1} - 1$ chips, plus the tie-breaking advantage, to return to the starting position.  Now Alice has at least two more chips than Bob, so she can bid Bob's number of chips plus one to move to vertex $n$.  Then, if Bob has enough chips to make the next $n$ moves, Alice will return to the starting vertex with more chips than Bob, plus the tie-breaking advantage, and will therefore win.

Finally, suppose $a$ and $b$ are both equal to $2^{n-1} - 1$.  Then Bob can prevent Alice from winning by bidding all of his chips for the first move.  If Alice bids all of her chips plus the tie-breaking advantage to take the first move, then Bob has exactly enough chips to return to the starting position with $2^{n-1} -1$ chips left.  Otherwise, Bob makes the first move, and Alice has just enough chips to return to the starting position with $2^{n-1} -1 $ chips left.  Since the tie-breaking advantage is always an advantage, by Lemma \ref{* is an advantage}, Bob's position is no worse than when the game began, so Alice cannot win.
\end{proof}

\begin{proposition}
Suppose $b < 2^n$.  Then Alice has a winning strategy for $\Ult^n(a, b^*)$ if and only if $a$ is greater than $b+1$, 
or 
$a$ is equal to $b+1$ and $b\neq 2^{n-1} - 1$.
\end{proposition}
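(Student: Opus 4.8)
The plan is to read the statement off the previous proposition wherever possible, using Lemmas~\ref{* is an advantage} and~\ref{1>*} to shuffle the tie-breaking advantage between the players, and then to settle the remaining boundary cases by direct strategies modeled on the previous proof. Write $W^*(a,b)$ and $W(a,b)$ for the assertions that Alice wins $\Ult^n(a^*,b)$ and $\Ult^n(a,b^*)$; the previous proposition computes $W^*(a,b)$ for $b<2^n$. For the losing cases, Lemma~\ref{* is an advantage} gives $W(a,b)\Rightarrow W^*(a,b)$, so $a<b$ and the single case $a=b=2^{n-1}-1$ are losses for Alice, since they already fail $W^*$. For the winning cases, Lemma~\ref{1>*} gives $W^*(a-1,b+1)\Rightarrow W(a,b)$, and $W^*(a-1,b+1)$ holds by the previous proposition (when $b+1<2^n$) whenever $a\ge b+3$, and also when $a=b+2$ and $b\ne 2^{n-1}-2$. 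Thus everything reduces to the diagonal cases $a\in\{b,b+1\}$, together with the single boundary win $a=b+2$, $b=2^{n-1}-2$ (and a handful of large-$b$ cases with $b+1=2^n$, which I would also dispatch directly).

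For the residual wins I would reuse the mechanics of the previous proof: whoever wins the bid at $0$ forces the opponent to win each of the next $n$ bids (moving back around to $0$) or else lose outright, and during that forced traversal the opponent's winning bids must essentially double, so the player who issued the ultimatum collects on the order of $2^{n-1}-1$ chips by the time the game returns to $0$. The one new feature is that Alice no longer holds the tie-breaking advantage, so to win a bid outright she must bid one more than Bob's maximum; this is exactly what shifts every threshold up by one relative to the previous proposition. For the residual losses, $a=b$ with $b\ne 2^{n-1}-1$ and $a=b+1=2^{n-1}$, I would give Bob a direct defense in which he bids all his chips on the first move and manages the tie-breaking advantage—sometimes taking the move himself, sometimes declining it—so that however Alice responds the game returns to $0$ with Bob no worse off than at the start.

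The main obstacle is the winning boundary $a=b+1$ with $b\ne 2^{n-1}-1$ (and its twin $a=b+2$, $b=2^{n-1}-2$), where Alice must win with no chip to spare and without the tie-breaking advantage. Here bidding everything on the first move is fatal: Alice reaches $n$ with no chips, and Bob traverses back to $0$ for free. Instead Alice must husband her chips, bidding so as to win at $0$ while keeping a reserve, or to extract chips and the tie-breaking advantage from Bob on the following move, after which she lands in a position already covered by the previous proposition. Verifying that the doubling schedule of forced bids leaves Alice strictly ahead on her return to $0$ exactly when $b\ne 2^{n-1}-1$, and that Bob's all-in defense holds exactly when $b=2^{n-1}-1$, is the delicate computation, and it mirrors the corresponding estimate $2^{n-1}-1$ in the previous proof.
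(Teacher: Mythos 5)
Your reduction via Lemmas~\ref{* is an advantage} and~\ref{1>*} is correct bookkeeping and isolates exactly the right residual cases ($a=b+1$ for wins; $a=b$ and $a=b+1=2^{n-1}$ for losses); this is organized differently from the paper, which simply reruns the previous proposition's argument with the thresholds shifted by one. The genuine problem is in your description of the one case that actually requires work, $a=b+1$. You assert that ``bidding everything on the first move is fatal: Alice reaches $n$ with no chips, and Bob traverses back to $0$ for free.'' This is false, and it inverts the correct strategy on half the parameter range. From every vertex $k>0$ Alice has a one-move win (the red edge to $A$), so Bob cannot win any bid of the forced traversal by conceding a tie: even against a chipless Alice he must spend the tie-breaking advantage on his first forced move and then $1,2,4,\dots,2^{n-2}$ on the subsequent ones. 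Consequently the all-in bid of $b+1$ returns the game to vertex $0$ with Alice holding $(2^{n-1}-1)^*$ against Bob's $2b+2-2^{n-1}$, which by the previous proposition is a win for Alice precisely when $b<2^{n-1}-1$; this is exactly the paper's winning line in that range. Conversely, the ``husband her chips'' opening bid of $b$ that you propose instead is correct only for $b>2^{n-1}-1$: for small $b$, Bob can tie, take the move, and send Alice to $-n$, where she must finance the return traversal out of $2b$ chips plus the tie-break and comes back strictly behind (try $n=2$, $a=1$, $b=0$, where the reserve opening actually loses while the all-in opening wins). So your plan discards the winning strategy and pursues a losing one on the range $b<2^{n-1}-1$.

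A smaller issue concerns the residual losses with $a=b$: the defense ``Bob bids all his chips on the first move'' works only if Bob also chooses correctly whether to accept or decline a tied all-in bid from Alice (he should take the move when $b\le 2^{n-1}-1$ and decline when $b\ge 2^{n-1}-1$), and even then you must know that the resulting position at vertex $0$, with Bob at least as well off as Alice, is not an Alice win --- which is part of what is being proved, so the induction needs care. The clean route is symmetry: if Alice won $\Ult^n(b,b^*)$, then by the monotonicity lemmas she would also win $\Ult^n(b^*,b)$, while by the symmetry of $\Ult^n$ Bob would win that same game, a contradiction. With these two repairs your reduction scheme does go through and is a legitimate alternative to the paper's direct adaptation of the previous proof.
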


\begin{proof}
The proof is essentially identical to the previous proposition's. If Alice has more than $b+1$ chips, she just bets 
$b+1$ and wins. If she has $b+1$ chips, then unless the congruence condition holds, she can win by betting $b+1$ chips if $b$ is less than $2^{n-1}-1$ and $b$ chips if $b$ is greater than $2^{n-1}-1$. If $b$ is equal to $2^{n-1}-1$ then Bob can bet all his chips to return with either $b^*$ chips or $b+1$ chips, thus forcing a draw.
\end{proof}

Now $\Ult^n$ is clearly stable, since there is only one move available to each player from each position, so the above cases can be used to determine the outcome of $\Ult^n$ for all possible chip counts using the periodicity results of Section~\ref{periodicity section}.

\begin{corollary}
Alice wins $\Ult^n(a^*, b)$ if and only if either $a$ is greater than $b$ or $a$ and $b$ are equal but not congruent to $2^{n-1}-1 \ (\mathrm{mod} \ 2^n)$. Similarly, Alice wins $\Ult^n(a, b^*)$ if and only if either $a$ is greater than $b+1$, or $a$ is equal to $b+1$ and not congruent to $ 2^{n-1}\ (\mathrm{mod} \ 2^n)$.
\end{corollary}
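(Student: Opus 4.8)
The plan is to deduce the corollary from the two preceding propositions, which settle all cases with $b<2^n$, by combining them with the periodicity theorems of Section~\ref{periodicity section} and the symmetry of $\Ult^n$. The first point to record is that $\Ult^n$ is stable for a trivial reason: from every non-terminal vertex each player has exactly one legal move, so that move is forced and is automatically ``as small as possible'' for Alice and ``as large as possible'' for Bob. Consequently every winning strategy, and every strategy preventing Alice from winning, is stable, and Theorems~\ref{periodicity for Alice} and~\ref{periodicity for Bob} apply with no extra hypothesis. Next I would pin down the period. Working backward from the end positions with the real-valued Richman calculus gives $R((\Ult^n)_k)=2^{-(k+1)}$ for $0\le k\le n$, and $R((\Ult^n)_{-k})=1-2^{-(k+1)}$ by symmetry, so $R(\Ult^n)=1/2$; the optimal bids $\Delta_v$ have smallest denominator $2^{n+1}$ (attained at $\Delta_n=2^{-(n+1)}$ and $\Delta_0=(2^n-1)2^{-(n+1)}$). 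Hence one may take $M=2^{n+1}$, giving $m=\overline m = M\cdot\tfrac12 = 2^n$, using $R(\overline{\Ult^n})=R(\Ult^n)=1/2$. Thus the shift in Theorems~\ref{periodicity for Alice} and~\ref{periodicity for Bob} adds $2^n$ chips to both players simultaneously, matching the residues mod $2^n$ in the statement.

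The key structural point is that combining the two periodicity theorems yields a genuine biconditional. Theorem~\ref{periodicity for Alice} gives that Alice winning $\Ult^n(a^*,b)$ implies she wins $\Ult^n((a+2^n)^*,b+2^n)$, while the contrapositive of Theorem~\ref{periodicity for Bob} gives the reverse implication (using stability to identify ``Alice does not win'' with ``Bob has a stable strategy to prevent her''); together the outcome of $\Ult^n(a^*,b)$ is invariant under $(a,b)\mapsto(a+2^n,b+2^n)$, and likewise for the $(a,b^*)$ form. I would use this to reduce an arbitrary pair to one with $\min(a,b)<2^n$ by subtracting $2^n$ from both coordinates $\lfloor\min(a,b)/2^n\rfloor$ times. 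In the reduced game either $b<2^n$, where the first preceding proposition applies directly and gives the stated answer (including the residue case $a=b\equiv 2^{n-1}-1$), or else $a<2^n\le b$, which forces $a<b$. In the latter range I would invoke the automorphism $k\mapsto -k$, $A\leftrightarrow B$ of $\Ult^n$, which exchanges the two players and shows that Alice wins $\Ult^n(a^*,b)$ if and only if Bob wins $\Ult^n(b,a^*)$. Since now Bob holds the tie-break and has fewer than $2^n$ chips, the game $\Ult^n(b,a^*)$ is covered by the second preceding proposition; in this range ($b>a$) it shows that the player \emph{without} the tie-break wins, so Bob does not win $\Ult^n(b,a^*)$, and translating back Alice does not win $\Ult^n(a^*,b)$, exactly as the corollary predicts. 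One checks throughout that the stated conditions depend only on the sign of $a-b$ and on $a\bmod 2^n$, so they are invariant under the periodicity shift, which is what makes the reduction consistent.

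The main obstacle is bookkeeping rather than ideas: one must track the tie-breaking advantage carefully through both the periodicity shift (which leaves it with the same player) and the symmetry $k\mapsto -k$ (which hands it to the opponent, turning an $(a^*,b)$ question into a $(b,a^*)$ question and hence switching which of the two preceding propositions applies). One must also be careful that ``Alice does not win'' may mean a draw rather than a Bob win, so in the range $a<b$ it is cleanest to argue, via the symmetric form of the propositions, that the non--tie-break player wins outright, which certainly precludes an Alice win. The second statement of the corollary, concerning $\Ult^n(a,b^*)$, follows by the identical argument using the second preceding proposition as the base case and the $(a,b^*)$ halves of Theorems~\ref{periodicity for Alice} and~\ref{periodicity for Bob}; the residue condition there becomes $a\not\equiv 2^{n-1}\pmod{2^n}$ because the boundary case shifts from $a=b$ to $a=b+1$.
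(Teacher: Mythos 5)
Your proof is correct and follows the same route as the paper, whose own justification is a single sentence observing that $\Ult^n$ is stable (each player has one forced move from every position) and invoking the periodicity results together with the two preceding propositions. You have simply supplied the details the paper leaves implicit --- the computation $M=2^{n+1}$ and $m=\overline m=2^n$, the biconditional obtained by combining Theorems~\ref{periodicity for Alice} and~\ref{periodicity for Bob} via stability and determinacy, and the symmetry argument disposing of the residual case $a<2^n\le b$ --- and these all check out.
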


Using these computations for the Ultimatum games, we can construct another interesting example: a discrete bidding game $G$ whose critical threshold $R(G)$ is rational, but when Alice's proportion of the total number of chips is exactly equal to $R(G)$ the outcome is not periodic in the total number of chips.  For finite games, such behavior is impossible by the periodicity results of Section~\ref{periodicity section}.

\begin{proposition}
Let $G$ be the game where on the first move, either player can choose one of $\Ult^1, \Ult^3, \Ult^5, \ldots$, and then the 
players play the chosen game with the current chip counts. Then $R(G)=1/2$, but the sequence of results of $G(k^*, k)$ 
is aperiodic.
\end{proposition}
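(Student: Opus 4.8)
The plan is to reduce $G(k^*,k)$ to the Ultimatum values computed in the preceding corollary and then to show that the resulting pattern is controlled by the $2$-adic valuation of $k+1$, which is not eventually periodic. Throughout, write $v_2(m)$ for the exponent of the largest power of $2$ dividing $m$. First, every legal move from the start vertex $s$ leads to the starting position of some $\Ult^{2j+1}$, and each such position has Richman value $1/2$ because $\Ult^{2j+1}$ is finite and symmetric; as in the $\Tug$ example above, when all available first moves lead to positions of value $1/2$ the critical threshold is $R(G)=1/2$.

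The crux is the single bid at $s$, after which the game enters one finite $\Ult^{2j+1}$ and never returns to $s$. I would first record that spending chips on this bid never helps: whoever wins it enters an Ultimatum game with the updated chip counts, and by the corollary the player holding strictly fewer chips loses every $\Ult^{2j+1}$. This lets me exhibit matching strategies. If $v_2(k+1)$ is odd, Alice bids $0$ and never spends the tie-break to declare herself the winner: should Bob outbid her he only enriches her and she wins the resulting game, while if the bid ties at $0$ she keeps the tie-break and lets Bob move, landing in $\Ult^{2j+1}(k^*,k)$ for Bob's choice of $j$. If $v_2(k+1)$ is even, Bob bids $0$: should Alice outbid him she enters an Ultimatum game with fewer chips and loses, while if the bid ties at $0$ either Alice spends the tie-break to move, landing in $\Ult^{2j+1}(k,k^*)$, or she lets Bob move, landing in $\Ult^{2j+1}(k^*,k)$ with $j$ of Bob's choosing. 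Since the corollary shows Alice never wins $\Ult^{2j+1}(k,k^*)$ (equal chips with Bob holding the tie-break), the only outcomes that matter are the games $\Ult^{2j+1}(k^*,k)$, and I conclude that Alice wins $G(k^*,k)$ if and only if $\Ult^{2j+1}(k^*,k)$ is an Alice-win for every $j\ge 0$.

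By the corollary, Alice wins $\Ult^{2j+1}(k^*,k)$ (equal chips, Alice holding the tie-break) precisely when $k\not\equiv 2^{2j}-1 \pmod{2^{2j+1}}$, and $k\equiv 2^{2j}-1\pmod{2^{2j+1}}$ is equivalent to $k+1=2^{2j}(1+2t)$, i.e.\ to $v_2(k+1)=2j$. Hence Bob can find a losing Ultimatum game for Alice exactly when $v_2(k+1)$ is even, so Alice wins $G(k^*,k)$ if and only if $v_2(k+1)$ is odd. To see that this sequence is not eventually periodic, suppose it had period $p$ for all large $k$ and write $p=2^sq$ with $q$ odd. Testing $k+1=2^r$ for large $r>s$ gives $v_2((k+1)+p)=v_2(2^r+2^sq)=s$, so the value at $k+p$ has parity $s\bmod 2$, whereas the value at $k$ has parity $r\bmod 2$; since $r$ ranges over both parities, this contradicts periodicity.

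The step I expect to be the main obstacle is the first-move reduction: one must verify, across all of Alice's and Bob's bids and tie-break choices, that neither player ever profits from spending a chip on the opening bid, so that the entire game collapses to the finitely-described Ultimatum scenarios $\Ult^{2j+1}(k^*,k)$ and $\Ult^{2j+1}(k,k^*)$. Once that reduction and the elementary identity $k\equiv 2^{2j}-1 \pmod{2^{2j+1}} \Leftrightarrow v_2(k+1)=2j$ are in hand, the aperiodicity argument is routine.
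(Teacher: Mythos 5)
Your proposal is correct and follows essentially the same route as the paper's proof: reduce the opening bid to the observation that Alice's only hope is to bid zero and decline ties, so the outcome is governed by whether every $\Ult^{2j+1}(k^*,k)$ is an Alice win, and then translate the corollary's congruences $k\equiv 2^{2j}-1 \pmod{2^{2j+1}}$ into the parity of $v_2(k+1)$. Your explicit $2$-adic argument for aperiodicity just fills in what the paper dismisses as ``easily seen,'' and your one small overstatement (that the player with strictly fewer chips always \emph{loses} an Ultimatum game, rather than merely fails to win) is harmless since only the non-winning conclusion is used.
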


\begin{proof}
Since the game is symmetric, it cannot be a win for Bob. If Alice either bets 1 or uses the tiebreaking advantage in a 
0-0 tie, then she will be playing an ultimatum game with fewer chips and cannot win. So her only chance at forcing a 
win is to let Bob win for 0 chips. So Bob picks any ultimatum game. If he can pick a game for which $\Ult^n(k^*, k)$ is a 
draw, $G(k^*, k)$ is a draw; otherwise, $G(k^*, k)$ is an Alice win. 

However, the values of $k$ for which $\Ult^n(k^*, k)$ is a draw are those for which $k\equiv 2^{n-1}-1$ (mod $2^n$). Only games $\Ult^n$ for odd $n$ are in play, so Bob can draw whenever $k$ is congruent to 0 mod 2, or 3 mod 8, or 15 mod 32, and so on. It is easily seen that the set of $k$ which satisfy these congruence conditions is not a finite union of arithmetic progressions and hence the outcome is not periodic in $k$.
\end{proof}

\section{A partial order on games} \label{partial order section}

If we know the optimal moves for Alice and Bob, or if both the game $G$ and the total number of chips $k$ are fixed, then the critical threshold $f(G,k)$ is easy to compute using the formulas in Section~\ref{discrete section}.  On the other hand, for fixed $G$, the tree of optimal moves may vary as $k$ varies.  In this section, we discuss situations in which one can find moves that are optimal for all $k$.

We define a partial order on games by setting $G \leq G'$ if $f(G,k) \leq f(G',k)$ for all $k$.  In other words, $G \leq G'$ means that, regardless of the total number of chips, if Alice has enough chips to win $G'$ then she also has enough chips to win $G$.  Then $G$ and $G'$ are equivalent in this partial order if $f(G,k) = f(G',k)$ for all $k$.  The point of this partial order is that if $G \leq G'$, then Alice will always prefer to move to a position $v$ such that $G_v$ is equivalent to $G$ rather than a position $v'$ such that $G_{v'}$ is equivalent to $G'$, regardless of the total number of chips.

\begin{example} Let $\A$ be the equivalence class of games in which Alice always wins, and let $\B$ be the equivalence class of games in which Bob always wins.  Then
\[
\A \leq G \leq \B
\]
for every game $G$.
\end{example}

If $G$ and $G'$ are games with many legal moves available from each position, then it may be very difficult to tell whether $G$ and $G'$ are comparable in this partial order.  In practice, we can sometimes compare relatively complicated games $G$ and $G'$ by relating them to games with fewer legal moves.

\begin{example}
For a positive integer $n$, let $A^n$ be the game in which Alice wins if she makes any of the first $n$ moves, and Bob wins otherwise.  Roughly speaking, we think of $A^n$ as a sudden-death game in which Alice has an advantage of order $n$.  Similarly, let $B^n$ be the game in which Bob wins if he makes any of the first $n$ moves and Alice wins otherwise.  We write $\E$ for the equivalence class of games in which the player with more chips always wins.  One game in the class of $\E$ is the game in which the first player to move wins.  Then $A^1$ and $B^1$ are both equivalent to $\E$, and 
\[
\A < \cdots < A^3 < A^2 < \E < B^2 < B^3 < \cdots < \B.
\]
The games $A^n$ and $B^n$ are useful for comparing positions, since $A^n \leq G$ for any game $G$ in which Bob wins if he makes all of the first $n$ moves, and $G' \leq A^n$ for any game $G$ in which Alice wins if she makes any of the first $n$ moves, and similarly for $B^n$.
\end{example}

\begin{example} \label{symmetric}
Games that are equivalent to $\E$ in this partial order are not necessarily obviously symmetric.  For instance, Tic-Tac-Toe played starting from any of the following six positions is equivalent to $\E$.
\begin{center}
\includegraphics{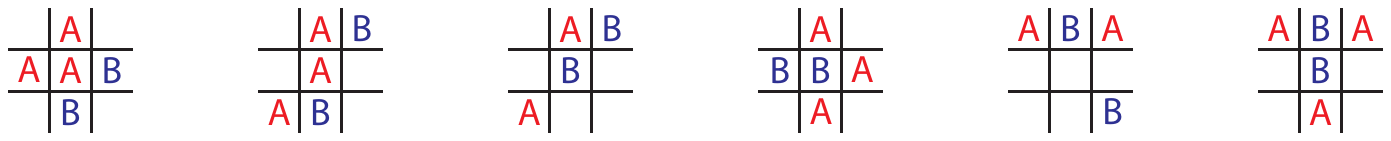}
\end{center}
From each of these positions, Alice wins if she makes two moves before Bob does, and Bob can prevent Alice from winning if he makes two moves before Alice does.  It follows that each of these positions is equivalent to $A^2 \wedge B^2$, which is equivalent to $\E$.  
\end{example}

Recall that the wedge sum $G \wedge H$ of two games $G$ and $H$ is the game where the first player to move can choose between the starting position for $G$ and the starting position for $H$.  Although $G \wedge H$ is the same as $H \wedge G$, when one game is clearly better for each player we write the game that Alice prefers on the left.  For example, $A^2 \wedge B^3$ is a game in which Alice can move to a position equivalent to $A^2$ and Bob can move to a position equivalent to $B^3$.  This notation is convenient for working with equivalence classes of games and our partial order, since $G \wedge H$ is equivalent to $G' \wedge H$ if $G$ is equivalent to $G'$, and $G \wedge H \leq G' \wedge H'$ if $G \leq G'$ and $H \leq H'$.  Simple relations between games are also easily expressible in this notation.  For instance, we have equivalences of games
\[
A^n \equiv \A \wedge A^{n-1} \mbox{ \ \ and \ \ } \E \equiv G \wedge \overline G,
\]
for any integer $n \geq 2$ and any game $G$.

In Section \ref{TTT section}, we completely analyze bidding Tic-Tac-Toe by comparing positions in Tic-Tac-Toe with iterated wedge sums of games $\A$, $\B$, $A^m$ and $B^n$.  For instance, we show that Tic-Tac-Toe played from the position
\begin{center}
\includegraphics{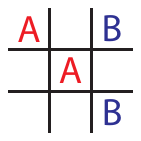}
\end{center}
is equivalent to $(\A \wedge B^2) \wedge \B$.

\section{Bidding Tic-Tac-Toe} \label{TTT section}

In this section, we use comparisons with bivalent games and the partial orders defined in Section~\ref{partial order section} to completely analyze Tic-Tac-Toe for both real-valued bidding and discrete bidding with arbitrary chip counts.

Since Tic-Tac-Toe is symmetric, to determine whether or not Alice wins Tic-Tac-Toe$(a,b)$ for all possible chip counts
\[
(a,b) \in (\N \times \N^*) \cup (\N^* \times \N).
\]
Indeed, Bob wins Tic-Tac-Toe$(a,b)$ if and only if Alice wins Tic-Tac-Toe$(b,a)$, and the outcome is a tie if and only if neither Alice nor Bob wins.  Therefore, we will focus our analysis on the game $\T$ that is played just like Tic-Tac-Toe except that Bob is declared the winner of any game that would normally be a tie.  In particular, Alice wins Tic-Tac-Toe if and only if she wins $\T$, so these games have the same critical thresholds.

\subsection{Optimal moves}  We begin by determining the optimal moves in Tic-Tac-Toe, as much as possible, using the partial order defined in Section~\ref{partial order section}.  This simple approach determines all of the optimal moves except Alice's optimal move from an empty board, which we reduce to two possibilities.  In Section~\ref{TTT tables} we compute the number of chips that Alice needs to win for every possible chip total for each of these two possibilities, using the recursion from Theorem~\ref{discrete recursion} for small chip totals and the periodicity results in Theorems~\ref{periodicity for Alice} and \ref{periodicity for Bob} for higher chip totals.

\begin{theorem} \label{stable TTT}
The tree of moves for $\T$ shown in Figure 1 is optimal for real-valued bidding.  These moves are also optimal for discrete bidding if the total number of chips is not equal to five.  Furthermore, the critical thresholds of each position reached through these optimal moves is as indicated in the figure.  In particular,
\[
R(\T) = 133/256.
\]
For discrete bidding with five chips, a tree of optimal moves is shown in Figure 2, below.
\end{theorem}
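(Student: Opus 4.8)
The plan is to carry out a complete backward analysis of the game tree of $\T$, using the partial order on games from Section~\ref{partial order section} to identify optimal moves at each position by comparison with the bivalent games $\A$, $\B$, $A^m$, $B^n$ and their iterated wedge sums. The key structural fact I would exploit is that $G \wedge H$ is equivalent to $G' \wedge H$ when $G \equiv G'$, and that $G \wedge H \leq G' \wedge H'$ whenever $G \leq G'$ and $H \leq H'$; this lets me replace each subtree by its equivalence class and reason about the whole position bottom-up without tracking exact chip counts. By symmetry of Tic-Tac-Toe (exploited already in the reduction to $\T$, where ties count as Bob wins), I only need to resolve the move tree down to the level at which every reachable position is equivalent to some explicit wedge sum of bivalent games.

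First I would classify the terminal and near-terminal positions: any position from which a player has a forced two-in-a-row-into-win is equivalent to $\A$ or $\B$, and positions of the sort displayed in Example~\ref{symmetric} are equivalent to $\E$ since they reduce to $A^2 \wedge B^2 \equiv \E$. Working upward, each interior position $v$ is equivalent to the wedge sum $\bigwedge_{A: v \to w}(\text{class of } G_w)$ for Alice's options against $\bigwedge_{B}(\text{class of } G_{w'})$ for Bob's, and I would collapse these using the relations $A^n \equiv \A \wedge A^{n-1}$ and $\E \equiv G \wedge \overline G$. Because the partial order is total on the relevant chain $\A < \cdots < A^2 < \E < B^2 < \cdots < \B$, at almost every position there is a unique best move for each player that is optimal for \emph{all} chip totals simultaneously, and this determines the tree in Figure~1. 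The one genuine indeterminacy is Alice's opening move from the empty board, which the partial order alone does not resolve past two candidate replies; I would simply record both and defer to Section~\ref{TTT tables}.

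Having pinned down the equivalence class of each position in the tree, the Richman value $R(\T)$ follows by applying the real-valued recursion $R(G_v) = (R_A(G_v) + R_B(G_v))/2$ upward from the leaves, using $R(\A)=0$, $R(\B)=1$, $R(\E)=1/2$, and the values of the $A^m$, $B^n$ already computed. The dyadic denominator $256 = 2^8$ is consistent with the depth of the relevant tree, and propagating the averages yields $R(\T) = 133/256$. That the stable (real-valued) moves remain optimal for discrete bidding whenever the total is not five is the assertion that the game is stable in that range; here I would invoke Theorem~\ref{stable for many chips} to handle all large totals and check the finitely many remaining small totals directly via the discrete recursion of Theorem~\ref{discrete recursion}, isolating $k=5$ as the sole exceptional total and exhibiting the alternate optimal tree of Figure~2 for it.

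\emph{Main obstacle.} The hard part will be verifying that the move tree is genuinely optimal at the handful of positions where two distinct replies fall into the \emph{same} equivalence class, so the partial order gives no preference, together with confirming that $k=5$ is the \emph{only} total at which stability fails. For the former, equivalence of classes means either move is optimal, so the figure's choice is justified but the verification that no finer distinction matters requires care. For the latter, I would combine the stability guarantee for large totals with an exhaustive check of the small cases, and the genuine work is confirming by the discrete recursion that $k=5$ forces a different optimal move (hence Figure~2) while $k=1,2,3,4$ and $k\geq 6$ do not.
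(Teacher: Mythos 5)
Your overall architecture matches the paper's: a bottom-up classification of positions by their equivalence classes as iterated wedge sums of $\A$, $\B$, $A^m$, $B^n$, using monotonicity of $\wedge$ under the partial order to certify that twenty-nine of the thirty moves in the tree are optimal for \emph{all} chip counts simultaneously, reducing the only genuine ambiguity to Alice's opening move (center versus corner), and then settling that by explicit computation of the discrete thresholds for both candidate openings. The Richman value $133/256$ falls out of the real-valued recursion on the resulting tree exactly as you describe. One presentational caveat: the relevant positions are wedge sums such as $(\A \wedge B^2)\wedge \B$ and $(\A \wedge B^2)\wedge B^3$ that do not all lie on the chain $\A < \cdots < A^2 < \E < B^2 < \cdots < \B$, so each position needs its own comparison argument (``wherever Bob moves, the best he can reach is no better than...''); the chain alone does not mechanically decide every move.

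The genuine gap is in how you propose to verify optimality for all chip totals other than five. Invoking Theorem~\ref{stable for many chips} tells you only that stability holds for $a+b$ sufficiently large, with no effective bound, so ``check the finitely many remaining small totals'' is not an executable plan---you cannot enumerate the exceptional totals without knowing where ``sufficiently large'' begins. The paper instead makes the check finite via the periodicity results, Theorems~\ref{periodicity for Alice} and \ref{periodicity for Bob}: taking $M = 256$ (a common denominator for all $R(G_v)$ and $\Delta_v$ in the tree), the thresholds satisfy $f'(TTT, 256n + r) = 133n + f'(TTT,r)$ and $f''(TTT, 256n+r) = 137n + f''(TTT,r)$, so one full period of explicit computation via the recursion of Theorem~\ref{discrete recursion} determines the answer for every chip total. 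That is what isolates $k=5$ as the unique exception and identifies the finite list of totals for which corner is also optimal. You should replace the appeal to Theorem~\ref{stable for many chips} with the periodicity argument; without it the final claim of the theorem is not actually proved.
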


\noindent The critical threshold $R(\T) = 133/256$ was computed independently by Theodore Hwa \cite{Hwa06}. 

\vspace{-10pt}

\begin{center}
\includegraphics[width=6.25in]{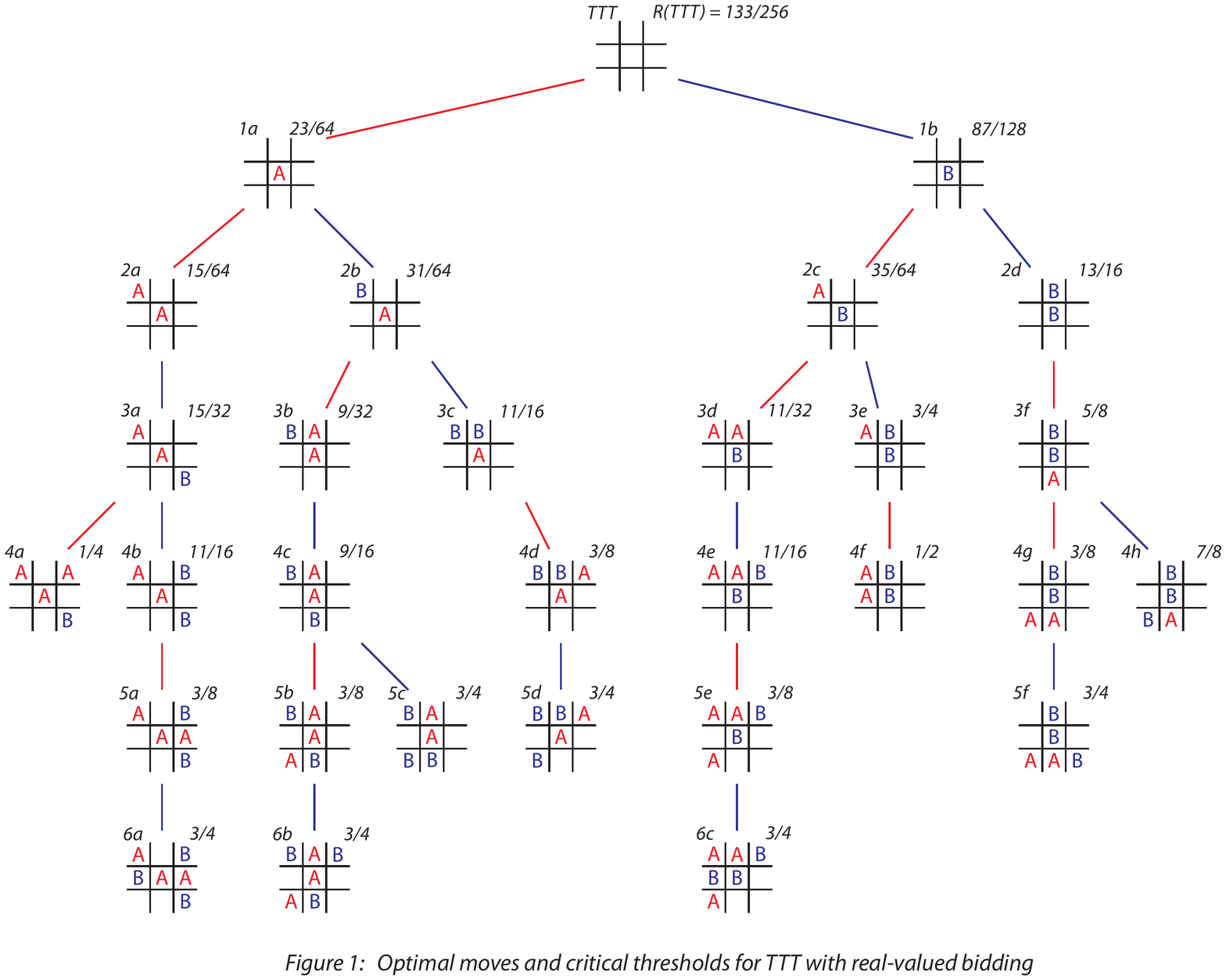}
\end{center}

Our general approach to studying $\T$ is to work our way backward from end positions.  Knowing which sequence of moves is optimal late in the game greatly simplifies the analysis of earlier positions, by reducing to comparisons with wedge sums of games equivalent to $\A$, $\B$, $A^n$ or $B^n$.

For discussing $\T$ and other games, it is useful to have some basic language for describing positions, although we attempt to keep jargon to a minimum.  We say that Alice has a \emph{threat} if she can win on the next move, and Bob has a \emph{threat} if he can prevent Alice from winning with his next move.  If Alice has a threat from a position $v$ then, in the partial order on games discussed in Section \ref{partial order section},
\[
G_v \leq \E,
\]
and if Bob has a threat from $v'$ then $\E \leq G_{v'}$, where $\E$ is the game in which the first player to move wins.  We say that Alice has a \emph{double threat} if she can win if she gets either of the next two moves, and Bob has a \emph{double threat} or \emph{triple threat} if he can prevent Alice from winning if he gets any of the next two moves or three moves, respectively. 

\begin{example}
Bob has a triple threat from the position
\begin{center}
\includegraphics{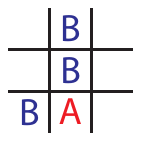}
\end{center}
since he can prevent Alice from winning if he moves anywhere in the right column.  This position is equivalent to $B^3$, since Alice wins if she gets three moves in a row.  On the other hand, Alice has a double threat from the position
\begin{center}
\includegraphics{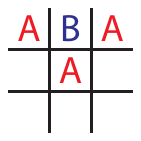}
\end{center}
but this position is strictly better for Alice than $A^2$, since Bob cannot prevent Alice from winning if he gets two moves in a row.
\end{example}

We say that a move is a \emph{block} if it goes from a position in which the opponent has a threat to a position in which the opponent does not have a threat.  A move is a \emph{counterattack} if it is a block that also creates a threat.

\begin{lemma} \label{counterattack}
If a counterattack is available, then the optimal move must be a block.
\end{lemma}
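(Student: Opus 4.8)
The plan is to argue entirely within the partial order on games from Section~\ref{partial order section}, using only the two facts recorded just before the statement: if Alice has a threat from a position $v$ then $G_v \le \E$, and if Bob has a threat from $v'$ then $\E \le G_{v'}$. By replacing $G$ with $\overline{G}$, which exchanges the roles of the two players, it suffices to treat the case in which Alice is the player to move; the case of Bob to move follows by the same argument applied to $\overline{G}$. So suppose Alice is to move from a position $v$ at which a counterattack is available. Since a counterattack is in particular a block, Bob must have a threat at $v$, and the counterattack moves to a position in which Bob no longer has a threat while Alice does.

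The key step is a domination argument. Let $w_c$ be the position reached by the counterattack. Since Alice has a threat at $w_c$, the first fact gives $G_{w_c} \le \E$. Now consider any move for Alice that is \emph{not} a block. By the very definition of a block, such a move must leave Bob with a threat: it goes from $v$, where Bob has a threat, to some position $w'$ where Bob still has a threat, for otherwise the move would itself be a block. Hence the second fact gives $\E \le G_{w'}$. Chaining these inequalities yields $G_{w_c} \le \E \le G_{w'}$, so $G_{w_c} \le G_{w'}$ for every non-blocking move $w'$. Because Alice prefers to move to positions that are as small as possible in the partial order, that is, $f_A(G_v,k)$ is the minimum of $f(G_w,k)$ over Alice's legal moves, the counterattack is at least as good for her as any non-blocking move.

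It follows that a block attains Alice's optimum: the counterattack already achieves a value $\le \E$, while every non-blocking move achieves a value $\ge \E$, so no non-blocking move is strictly better than the block $w_c$. In the typical situation, where the counterattack produces a position strictly below $\E$ in the order, every non-blocking move is strictly worse and the optimal move must therefore be a block. Running the identical argument for Bob, where a counterattack blocks Alice's threat, so $G_{w'} \le \E$ for every non-blocking move, and creates Bob's own threat, so $\E \le G_{w_c}$, while Bob prefers larger positions, completes the proof.

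The one point requiring care, and the main thing to pin down, is the claim that a non-blocking move necessarily leaves the opponent with a threat; this is exactly the contrapositive of the definition of a block. The only edge case is when the mover has an immediate winning move instead, but such a move reaches an end position equivalent to $\A \le \E$, and the conclusion is then immediate. Beyond this bookkeeping the argument is purely formal: all the genuine content is already packaged into the threat inequalities $G_v \le \E$ and $\E \le G_{v'}$ established earlier, so there are no calculations to perform, only a correct application of the partial order together with the symmetry $G \leftrightarrow \overline{G}$.
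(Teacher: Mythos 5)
Your proof is correct and is essentially the paper's own argument, just written out in more detail: the paper's one-line proof is exactly the observation that a counterattack reaches a position at least as good as $\E$ while any non-blocking move reaches a position no better than $\E$. The extra bookkeeping you supply (the reduction to Alice via $\overline{G}$, the contrapositive of the definition of a block, and the immediate-win edge case) is all consistent with the paper's definitions and does not change the approach.
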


\begin{proof}
A counterattack moves to a position that is at least as good as $\E$, and any move that is not a block moves to a position that is no better than $\E$.
\end{proof}

\noindent  In Tic-Tac-Toe, if a player has a block available then it is always unique.  Therefore, counterattacks are always optimal in Tic-Tac-Toe.  Note however, that blocks are not always optimal in Tic-Tac-Toe.  See the analysis of Alice's move from position 3e, below, for an example where blocking is not optimal.

\begin{proposition}
Every move in Figure 1 other than Alice's first move is optimal for all chip counts.
\end{proposition}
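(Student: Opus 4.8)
The plan is to verify optimality by working backward from the end positions, comparing every position reached in Figure~1 to an iterated wedge sum of the bivalent games $\A$, $\B$, $A^m$, and $B^n$. The essential point is that the partial order of Section~\ref{partial order section} is independent of the chip count $k$: if the move drawn at a position $v$ leads to the child $w$, it suffices to check that $G_w \leq G_{w''}$ for every other position $w''$ that Alice can reach (so that $w$ minimizes the threshold $f(\cdot,k)$ simultaneously for all $k$), and dually that Bob's indicated move maximizes $f(\cdot,k)$ for every $k$. Thus for each position it will be enough to exhibit a single partial-order-extremal child and identify it with the move in the figure; because the comparison is valid for all $k$ at once, optimality for all chip counts follows with no further case analysis. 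Alice's first move is excepted precisely because it is the one position where the two candidate children turn out to be \emph{incomparable} in the partial order.

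The backward induction itself proceeds in two regimes. In the latest positions at least one player has a threat, so by the threat comparisons $G_v \leq \E$ or $\E \leq G_{v'}$; whenever a counterattack is available, Lemma~\ref{counterattack} forces the optimal move to be a block, and since blocks in Tic-Tac-Toe are unique, the counterattack is the unique optimal move. This disposes of all positions where a counterattack exists. When no counterattack is available, I would identify the equivalence class of each sibling position by sandwiching: $A^n \leq G_w$ whenever Bob wins $G_w$ by taking all of the next $n$ moves, and $G_w \leq A^n$ whenever Alice wins $G_w$ by taking any one of the next $n$ moves, together with the analogous bounds for $B^n$ and the monotonicity $G \wedge H \leq G' \wedge H'$ of wedge sums. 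Combining these with the ordering $\A < \cdots < A^2 < \E < B^2 < \cdots < \B$ pins down which child is extremal.

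One subtlety is that blocking is not always optimal: at position 3e Alice does better to ignore Bob's threat and create threats of her own, so there the claim requires showing that the non-blocking move leads to a strictly smaller position in the partial order than any block. I would handle such positions by computing the wedge-sum equivalence class of the non-blocking child explicitly — recognizing it, for instance, as a game of the form $(\A \wedge B^2) \wedge \B$ — and comparing it against the class of the blocking child using the wedge-sum algebra of Section~\ref{partial order section}.

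The main work, and the principal obstacle, is in the mid-game positions where several genuinely distinct moves are available and the equivalence classes are iterated wedge sums with several terms; there one must establish the wedge-sum identities tightly enough that the partial order actually \emph{decides} between the candidate children, rather than merely bounding them loosely. Once each such identity is in hand, the optimality of the drawn move holds for all chip counts, and the sole position where the comparison fails to be decisive is Alice's first move — exactly the exception in the statement, which is deferred to the explicit threshold computations of Section~\ref{TTT tables}.
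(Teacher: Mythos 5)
Your plan matches the paper's proof exactly in its framework: work backward from end positions, dispose of the counterattack positions via Lemma~\ref{counterattack}, identify each remaining child position with an iterated wedge sum of $\A$, $\B$, $A^m$, $B^n$ by sandwiching, and use the chip-count-independence of the partial order of Section~\ref{partial order section} to conclude optimality for all $k$ at once. You also correctly flag the two delicate points: the non-blocking move from 3e, and the fact that Alice's first move is excepted because the center and corner children are genuinely incomparable.

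The gap is that the proposal stops at the framework. The paper's proof \emph{is} the execution: roughly twenty separate position-by-position verifications (Alice's and Bob's moves from 4c, 3a, 3b, 3e, 3f, 2a, 2b, 2c, 2d, 1a, 1b, and Bob's first move), each requiring a specific wedge-sum identification and a specific comparison against every alternative move from that position. None of these is carried out, and several are not routine --- e.g.\ showing that Bob's side-then-side opening is strictly worse than corner-then-side from 1a, or that Alice's lower-right move from 2c is strictly worse than top or left because it fails to create a threat even though it reaches the same class $(\A\wedge B^2)\wedge\B$ after Bob's reply. Without those checks the proposition is asserted, not proved. One small inaccuracy: Alice's non-blocking move from 3e reaches a position equivalent to $\E$ (and the block reaches $A^2\wedge B^3$, which is strictly worse for her than $\E$); your illustrative guess of $(\A\wedge B^2)\wedge\B$ is the class of a different family of positions (4b, 4e, 3c), though you hedged this with ``for instance.''
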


\begin{proof}

There are twenty-nine moves in the stable game tree above, so we must analyze each of these twenty-nine moves.  The moves from positions 5a, 5b, 5e, 4a, 4d, 4e, 4f, 3d, and 3e are counterattacks, and by Lemma \ref{counterattack}, counterattacks are always optimal.  We now analyze each of the remaining moves other than Alice's first move, moving from bottom to top and from left to right in Figure 1.

 \vspace{10pt} 

\noindent  \emph{Alice's move from 4c:}  Wherever Alice moves, Bob can create a double threat by moving top right or bottom left.  Therefore, the best game Alice can hope to reach is $\A \wedge B^2$, which she reaches by moving bottom left.

 \vspace{10pt} 

\noindent \emph{Bob's move from 4c:}  Wherever Bob moves, Alice can win if she gets two moves in a row.  Therefore, the best game Bob can hope to move to is $B^2$, which he reaches by moving bottom left.

 \vspace{10pt} 

\noindent  \emph{Alice's move from 3a:}  Wherever Alice moves, Bob can win if he gets two moves in a row by filling either the right column or the bottom row.  Therefore, the best game Alice can hope to move to is $A^2$, which she reaches by moving top right.

 \vspace{10pt} 

\noindent \emph{Bob's move from 3a:}  Bob's move in either corner creates $(\A\wedge B^2) \wedge \B$, which is better for him than $A^2 \wedge \B$ and hence better than any position from which Alice can produce a double threat.  If Bob moves on a side, then Alice can create a double threat by moving in a corner.  Therefore, Bob's corner move is optimal.

 \vspace{10pt} 

\noindent \emph{Bob's move from 3b:}  If Bob does not block, then the best game he can reach is $\E$, which is equivalent to $A^2 \wedge B^2$.  Blocking gives $(\A\wedge B^2) \wedge B^2$.  Since $\A\wedge B^2$ is strictly better than $A^2$, it follows that blocking is optimal for Bob.

 \vspace{10pt} 

\noindent \emph{Alice's move from 3e:}  This is an interesting move, since Alice's optimal strategy is not to block, but instead to create a threat of her own to reach a game equivalent to $\E$.    If Alice blocks, then the resulting position is $A^2 \wedge B^3$, since Bob can move lower left to create a triple threat, and this is strictly worse for Alice than $\E$.   Of course, if Alice moves in the right column, then Bob has a threat but Alice does not, which is again worse than $\E$.  Therefore, moving in the left column is optimal for Alice.

\vspace{10 pt}

\noindent  \emph{Alice's move from 3f:}  Wherever Alice moves, Bob can produce a double threat by moving in a bottom corner.  Therefore Alice's bottom corner move, which gives $\A\wedge B^2$, is optimal.

\vspace{10 pt}

\noindent \emph{Bob's move from 3f:}  Wherever Bob moves, Alice can win if she gets the next three moves in a row.  Therefore, Bob's move in a bottom corner, which produces a triple threat, is optimal.

\vspace{10 pt}

\noindent \emph{Bob's move from 2a:}  If Bob does not block, then he must get the next move as well.  And if blocking is not optimal, then his optimal next move must also be nonblocking (since otherwise he may as well have blocked the first time).  If Bob makes two moves without blocking, then the best he can do is create a symmetric threat, reaching $\E$.  However, if he does block, then in two moves he can reach a position in which he has a threat and Alice does not, which is strictly better than $\E$.  Therefore, blocking is optimal.

\vspace{10 pt}

\noindent \emph{Alice's move from 2b:}  Suppose Alice  moves upper right or lower left.  Then Bob's next move must be a counterattack, and Alice's move after that must be a counterattack as well, and the resulting position after five moves is the one labeled 5a in the game tree.  Therefore either of these moves leads to $\A\wedge (TTT_{5a} \wedge \B)$.

Suppose Alice moves lower right.  Then Bob can move upper right forcing Alice to counterattack in the top center, leading to position 5a.  Since this move does not create a threat, it is strictly worse than $\A\wedge (TTT_{5a} \wedge \B)$.

Finally, suppose Alice moves top or left.  Then we have seen in the analysis of position 4c that Bob's optimal response is to block.  It follows that moving top or left leads to $\A\wedge (TTT_{5b} \wedge B^2)$.  Since $TTT_{5a}$ and $TTT_{5b}$ are both equivalent to $\A\wedge B^2$, it follows that any move other than lower right is optimal.  In particular, moving top, as shown in the game tree, is optimal.

\vspace{10 pt}

\noindent \emph{Bob's move from 2b:}  If Bob moves top or left, then the resulting position is equivalent to $(\A\wedge B^2) \wedge \B$.

Suppose Bob moves upper right or lower left.  Then Alice must counterattack.  No matter what Bob does next, Alice will still be able to win if she gets the next two moves.  Therefore, this is no better for Bob than $(\A\wedge B^2) \wedge \B$. 

Finally, suppose Bob moves lower right.  Then Alice can move in the top, forcing Bob to counterattack, and again Alice can win if she gets the next two moves.  So this is still no better than $(\A\wedge B^2) \wedge \B$.  Therefore, moving top or left is optimal.

\vspace{10 pt}

\noindent \emph{Alice's move from 2c:}  If Alice moves top or left, a series of counterattacks ensues, as shown in the stable game tree.  This position gives $\A\wedge \big( (\A\wedge B^2) \wedge \B \big)$.

If Alice moves upper right or lower left, then Bob counterattacks and then best Alice can do in response is to reach a symmetric position.   Therefore, these moves give $\A \wedge ( \E \wedge \B)$ which is strictly worse for Alice than moving top or left.

Finally, suppose Alice moves lower right.  Then Bob can move on a side and force a series of counterattacks.  After such a move from Bob, this position is then $(\A\wedge B^2) \wedge \B$, which is the same as if Alice moves top or left and then Bob counters.  However, since lower right does not create a threat for Alice, this is strictly worse than moving top or left, and it follows that top and left are Alice's only optimal moves.

\vspace{10 pt}

\noindent \emph{Bob's move from 2c:}  Wherever Bob moves, Alice can create a threat on the next move.  Therefore, the best game Bob can hope to reach is $\E \wedge \B$, which he gets to by moving top or left.

\vspace{10 pt}

\noindent \emph{Alice's move from 2d:}  If Alice does not block, then she must get two moves in a row, and if she gets two moves in a row then the best game she can reach without blocking is $\E$.  Since she can reach position 4g in two moves, which is strictly better than $\E$, it follows that blocking is her only optimal move.

\vspace{10 pt}

\noindent \emph{Alice's move from 1a:}  If Alice moves in a corner, then the resulting game is $\A\wedge (A^2 \wedge TTT_{3a})$.  We must show that Alice cannot do better by moving on the top.  

Suppose Alice moves on the top.  Then Bob can block on the bottom.  If Bob gets the next move, then he can force a series of counterattacks, guaranteeing that this is no worse for him than 3a.  Therefore, it will suffice to show that if Alice gets the next move after Bob blocks, then the best she can reach is a game equivalent to $A^2$.  If Alice moves in the top row, then she has a double threat and Bob can win if he gets two moves in a row, so this is exactly $A^2$.  If Alice moves in the middle row, then Bob can block, reaching a position
\begin{center}
\includegraphics{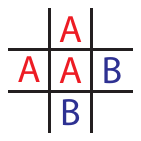}
\end{center}
that is symmetric by Example \ref{symmetric}.  And if Alice moves in the bottom row, then Bob can block, reaching a position
\begin{center}
\includegraphics{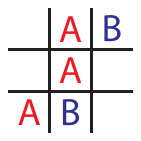}
\end{center}
that is symmetric by Example \ref{symmetric}.  It follows that Alice cannot do better than moving in a corner.

\vspace{10 pt}  

\noindent \emph{Bob's move from 1a:}  If Bob moves in a corner, then the resulting game is $(\A \wedge TTT_{4c}) \wedge (TTT_{4d} \wedge \B)$.  We must show that Bob cannot do better by moving on top.

Suppose Bob moves on top and Alice wins the next move.  Then Alice can move on the left side, and we claim that the resulting position is equivalent to $\A \wedge \E$.  If Bob does not block, then the best he can reach is a symmetric position.  If Bob does block, then the resulting position is still symmetric, since Alice's best move is bottom left, which gives $A^2$, and Bob's best move is top right, which gives $B^2$.  Now $\A \wedge \E$ is strictly worse for Bob than $\A \wedge TTT_{4c}$, which results if Bob moves in a corner and Alice wins the next move.

Suppose Bob moves on top and gets the next move as well.  The only way he can do better than starting in a corner is by moving on a side again.  Suppose he moves left on his second move.  Then Alice can move in the upper left, and Bob's counterattack leads to a position $B^2$.  If Bob moves on the bottom on his second move, then Alice again moves in the upper left, and Bob's counterattack is worse for him than $B^2$.  It follows that after making two consecutive moves on sides, Bob's position is strictly worse than the position $(\A \wedge B^2) \wedge \B$ that he would reach by moving first in a corner and then on an adjacent side.  Therefore, the corner move from position 1a is optimal for Bob.

\vspace{10 pt} 

\noindent \emph{Alice's move from 1b:}  Suppose Alice moves on the top instead.  If Alice gets the following move as well, she could only do better by moving on another side, either left or bottom.  If she plays on the left, then Bob can move upper left, and Alice will need to block in order to win.  Therefore, Alice blocks and Bob's next move creates $B^2$.  In other words, after Alice moves top and left, the position is strictly worse for Alice than $\A \wedge \big( (\A \wedge B^2) \wedge \B \big)$.  On the other hand, if Alice moves top and then bottom, then Bob can move left and the best Alice can do is to reach the position
\begin{center}
\includegraphics{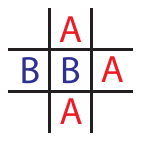}
\end{center}
that is symmetric by Example \ref{symmetric}.  Therefore, if Alice moves top and then bottom, the resulting position is strictly worse for Alice than $\A \wedge (\E \wedge \B)$.  Both of these situations are worse than the position 3d, which is equivalent to $\A \wedge \big( (\A \wedge B^2)  \wedge \B \big)$, which Alice gets to by playing top left and then top.

Next, suppose Alice moves on top and Bob makes the next move.  Bob can play top right, and then Alice needs to take the lower left corner to have any chance of winning.  So Alice blocks, and reaches a position
\begin{center}
\includegraphics{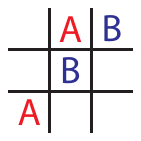}
\end{center}
that is symmetric, by Example \ref{symmetric}.  In particular, if Alice moves top and then Bob makes the next move, the result is no better for Alice than $\E \wedge \B$.  It follows that Alice cannot do better than reaching the position $\T_{4d} \wedge (\E \wedge \B)$ that she gets by playing in the corner.

\vspace{10 pt}

\noindent \emph{Bob's move from 1b:}  Suppose Bob moves in the upper left corner instead.  Then Alice can block in the lower right.  If Bob gets the next move, he can do no better than to reach $B^2$, while Alice can move on the bottom to reach a position $\A \wedge B^2$.  Therefore, if Bob moves in a corner, his position is no better than $\big( (\A \wedge B^2) \wedge B^2 \big) \wedge \B$, which is worse for him than $\T_{2d}$, which is equivalent to $\big((\A \wedge B^2) \wedge B^3 \big) \wedge \B$.

\vspace{10pt}

\noindent \emph{Bob's first move:}  Suppose Bob makes the first move, but not in the center.  If Alice makes the second move, she can go in the center.  By our analysis of Bob's move from 1a, 
\begin{center}
\includegraphics{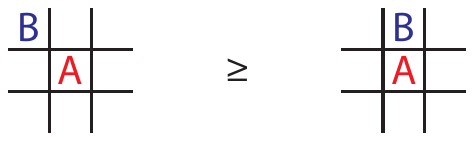}
\end{center}
and by inspection of the tree of optimal moves from positions 2b and 2c in Figure 1, it is straightforward to check that $TTT_{2c} \geq TTT_{2b}$.  Therefore, if Alice makes the second move, Bob would have done best to move in the center.

Suppose Bob makes the second move as well.  If Bob's optimal first move is not in the center, then his optimal second move must be not in the center as well.  Suppose Bob starts with any of the following two moves:
\begin{center}
\includegraphics{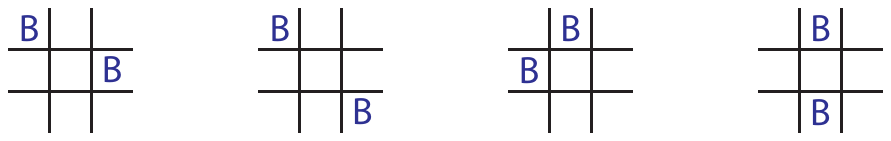}
\end{center}
Then Alice can respond by moving in the center.  After this, Bob can do no better than to create a double threat on his next move.  Therefore each of the four positions above is no better than $\big( (\A \wedge B^2) \wedge B^2 \big) \wedge \B$.

Similarly, if Bob starts with
\begin{center}
\includegraphics{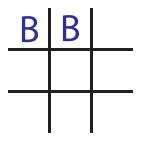}
\end{center}
and then Alice blocks, the resulting position is still no better for Bob than $\big( \A \wedge B^2) \wedge B^2 \big) \wedge \B$.  

Finally, if Bob starts with
\begin{center}
\includegraphics{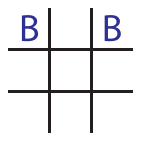}
\end{center}
then Alice can block.  On the next move, Bob can do no better than to move in the center and create a $B^3$, while Alice can move in the center to create a position that is worse for Bob than $\A \wedge B^2$, since Alice can win on the next move, but Bob cannot create a double threat.  In particular, all of these scenarios are worse for Bob than $\big( \A \wedge B^2) \wedge B^3 \big) \wedge \B$, which is equivalent to $TTT_{2d}$.  It follows that Bob's only optimal first move is in the center.
\end{proof}  

\begin{lemma} \label{Alice's first two moves}
If center is not an optimal first move for Alice then Alice's optimal first two moves are in adjacent corners.
\end{lemma}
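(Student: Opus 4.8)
The plan is to pin down Alice's opening using the symmetry of the board together with the wedge-sum calculus of Section~\ref{partial order section}, and then to read off her second move from the preceding proposition. Up to the dihedral symmetry of the empty board, Alice's first move is the center, a corner, or an edge. Since the center is assumed not to be optimal, it suffices to establish two things: first, that an edge opening is no better for Alice than a corner opening, so that a corner opening is optimal; and second, that after a corner opening Alice's best second move is in an adjacent corner.

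For the first point I would show $\T_c \le \T_e$ in the partial order on games, where $\T_c$ and $\T_e$ denote the positions after a corner opening and an edge opening. By the recursion of Theorem~\ref{discrete recursion} together with the ``Alice prefers the left'' convention, each position is equivalent to a wedge sum $W_A \wedge W_B$, in which $W_A$ is the position reached when Alice wins the next bid and moves to the square that minimizes the resulting Richman value and $W_B$ the position reached when Bob wins and moves to the square that maximizes it, provided these replies are dominant for all chip counts, which the partial order lets us verify. Because $G \wedge H \le G' \wedge H'$ whenever $G \le G'$ and $H \le H'$, it then suffices to compare the two openings reply by reply: I would show that Alice's best reply after a corner opening is at least as good as after an edge opening, and that Bob's best reply after a corner opening is no more damaging than after an edge opening. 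The geometric content is that a corner lies on a diagonal and an edge does not, so a corner opening controls one more potential line; concretely, Bob's most damaging reply in each case is to seize the center, and one checks that corner-plus-center is at least as good for Alice as edge-plus-center, with the analogous comparison for Alice's own replies. This gives $\T_c \le \T_e$, so that, together with the hypothesis that the center is not optimal, a corner opening is optimal.

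For the second point I would compare the three candidate second moves from a corner opening: an adjacent corner, the opposite corner, and an edge. The adjacent corner both creates a threat and retains the most winning potential, so it reaches the most Alice-favorable of the three resulting games. By contrast, the edge second move leads to positions that are strictly worse for Alice; in several of the relevant lines the board becomes symmetric in the sense of Example~\ref{symmetric}, hence equivalent to $\E$, and the opposite-corner second move likewise admits a Bob reply that is no better for Alice. One verifies these comparisons by matching the continuations against wedge sums of $\A$, $\B$, $A^n$, $B^n$, and $\E$, exactly as in the preceding proposition. Since every move after Alice's first is already known to be optimal by that proposition, this identifies the adjacent corner as Alice's optimal second move, completing the proof.

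The step I expect to be the main obstacle is the edge-domination $\T_c \le \T_e$. The geometric reason is transparent, but converting it into the two wedge-sum inequalities requires tracing each player's best reply a move or two deeper and matching every resulting position against the standard games $\A$, $\B$, $A^n$, $B^n$, and $\E$; as elsewhere in the analysis of $\T$, the difficulty lies in the volume of case checking rather than in any single hard idea, and care is needed to ensure that the dominating replies are dominant for all chip counts, and not merely for real-valued bidding.
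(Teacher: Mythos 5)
Your toolkit (the partial order, wedge sums, Example~\ref{symmetric}) matches the paper's, but your reorganization of the argument introduces a genuine gap. You split the problem into (i) the corner opening dominates the edge opening uniformly in the chip count, and (ii) the adjacent corner is the best second move after a corner opening. Step (i) is both stronger than what the lemma requires and, as you propose to prove it, close to circular: to write the edge opening as a wedge $W_A \wedge W_B$ whose components are given by replies that are dominant for \emph{every} chip count, you must already know Alice's optimal second move after an edge opening uniformly in the chip count --- a sub-case of the very statement being proved, and the paper's central theme is that such uniformity can fail (Alice's first move itself depends on the chip count). Bounding Alice's branch of the edge opening from below therefore forces you back into enumerating all of her second moves. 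The paper sidesteps this: it first observes that if Bob wins the second bid after any non-center opening he replies in the center, and by the analysis of position 1b (and the trees at 2b and 2c) the resulting position is no better for Alice than the corresponding branch of the center opening; hence a non-center first move can beat the center opening only through the branch in which Alice makes both of the first two moves. It then checks exhaustively that every two-move pair other than adjacent corners --- opposite corners, corner plus either kind of edge, two adjacent edges, two opposite edges --- is no better than the position 2a reachable from the center opening. That is logically exactly what the lemma asserts and needs no uniform domination claim.

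A second, smaller gap: your candidate list for Alice's second move after a corner omits the center and conflates the two inequivalent edges (adjacent to her corner versus not). The center second move must be excluded, and the exclusion is not a wedge comparison at all: corner-then-center produces the same board as center-then-corner, i.e., exactly position 2a, so it cannot witness the hypothesized superiority of a non-center opening. If you replace step (i) with the paper's reduction and then enumerate all five non-adjacent-corner pairs, your argument becomes the paper's.
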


\begin{proof}
Suppose Alice makes an optimal first move, but not in the center.  If Bob moves second then he can move in the center.  However, by our analysis of Alice's move from position 1b, and by inspection of the trees of optimal moves from positions 2b and 2c, we have
\begin{center}
\includegraphics{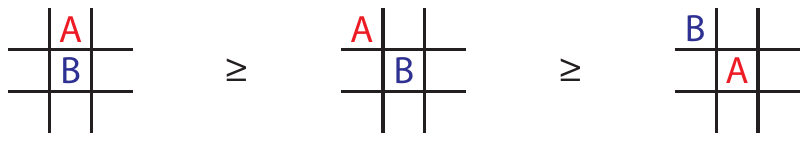}
\end{center}
Therefore, Alice must reach a position better than 2a by making the first two moves, neither in the center.
We consider all possible positions that Alice can reach after making the first two moves.  If Alice's first two moves are any of the following
\begin{center}
\includegraphics{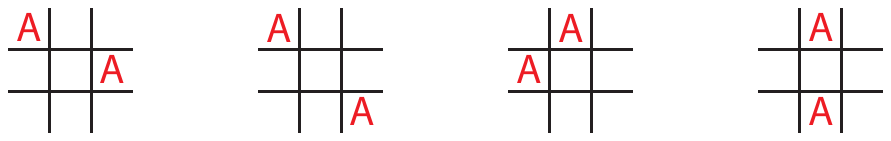}
\end{center}
then Bob moves in the center.  Then Alice cannot reach a position better than $A^2$ on her next move, while Bob can move in either the top left corner or the top side to reach a position that is worse for Alice than 4b $\equiv (\A \wedge B^2) \wedge \B$.  Similarly, if Alice makes her first two moves
\begin{center}
\includegraphics{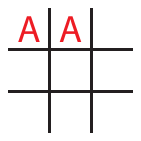}
\end{center}
then Bob can block.  After Bob blocks, if Alice moves next she can get no better than $A^2$, while Bob can move in the center to reach position 4e, which is equivalent to 4b.  In particular, none of these scenarios is better for Alice than the positions that she would reach by starting with 2a, and the only remaining possibility is that Alice could make her first two moves in adjacent corners.  This proves the lemma.
\end{proof}

\begin{lemma}
Suppose center is not an optimal first move for Alice, for some fixed total number of chips.  Then every move in Figure 2 is optimal.
\end{lemma}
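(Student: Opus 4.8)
The plan is to reduce the verification almost entirely to facts already established, so that only one small new subtree needs fresh analysis. By construction Figure 2 is obtained from Figure 1 by replacing Alice's central opening with a corner opening, and the hypothesis that center is not optimal for Alice is exactly what forces this replacement. I would first dispose of everything except the branch in which Alice wins the opening bid. By the ``Bob's first move'' part of the preceding Proposition, center is Bob's unique optimal first move for every chip count; hence the branch of Figure 2 in which Bob wins the opening bid leads to the board with a blue mark in the center, which is precisely position 1b of Figure 1, and its entire subtree is already certified optimal. More generally, any position occurring in Figure 2 that agrees, as a board position (up to the symmetries of Tic-Tac-Toe and the equivalence between $\T$ and Tic-Tac-Toe), with a position of Figure 1 inherits the optimality of all of its moves from the Proposition, since the optimal moves there were shown to be optimal for all chip counts.

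On the remaining branch, where Alice wins the opening bid, Lemma~\ref{Alice's first two moves} supplies Alice's optimal first two moves directly: they lie in adjacent corners, exactly as drawn in Figure 2. It therefore remains to verify two things: Bob's optimal reply from the position in which Alice holds a single corner, and every move in the subtree hanging below Alice's adjacent-corner position. For the first, I expect Bob's optimal reply to be the center; this lands on the board carrying Alice's corner and Bob's center, which is the same board reached in Figure 1 from 1b by Alice's corner reply, so its subtree is already verified, and one need only confirm, by comparing Bob's alternatives to wedge sums of $\A$, $\B$, $A^n$, and $B^n$, that no other reply does better.

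For the adjacent-corner subtree, which is the genuinely new content, I would argue exactly as in the Proposition: work backward from end positions, apply Lemma~\ref{counterattack} wherever a counterattack is available, recognize $\E$-equivalent positions by means of Example~\ref{symmetric}, and compare each candidate move to the appropriate iterated wedge sums of $\A$, $\B$, $A^n$, and $B^n$ to pin down the best reachable equivalence class. The essential point is that all of these comparisons take place in the chip-independent partial order of Section~\ref{partial order section}, so each move so verified is optimal for \emph{every} chip count, and in particular for the fixed count at hand; the only chip-dependent ingredient in the whole argument is the choice between the central and corner openings, which the hypothesis and Lemma~\ref{Alice's first two moves} have already settled.

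The main obstacle is bookkeeping rather than a single hard step: one must enumerate the few new positions below the adjacent-corner opening and, for each, confirm that the move shown in Figure 2 reaches the optimal equivalence class among all legal moves, including the delicate blocking-versus-counterattacking choices of the kind seen at position 3e. The one pitfall to guard against is invoking a comparison that is valid only for large chip counts; this is avoided because every comparison used is between equivalence classes and hence holds for all $k$, so once the opening fixes the shape of the tree the optimality of each remaining move is chip-count free.
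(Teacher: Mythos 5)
Your proposal matches the paper's proof in structure and in every substantive idea: both arguments dispose of Bob's opening move and the subtree it shares with Figure 1 by appealing to the preceding results, fix Alice's first two moves (adjacent corners) via Lemma~\ref{Alice's first two moves}, handle the one counterattack by Lemma~\ref{counterattack}, and reduce everything else to chip-count-independent comparisons with wedge sums of $\A$, $\B$, $A^n$, $B^n$ using the symmetric positions of Example~\ref{symmetric}. The only shortfall is that the paper's proof consists almost entirely of actually executing the case analysis you defer to ``bookkeeping'' --- the six remaining moves (Bob's replies from $1a'$, $2a'$, $3a'$, $4a'$ and Alice's moves from $3a'$, $4b'$), including the non-obvious step of separately ruling out Bob's bottom-right reply from $1a'$ in favor of center --- so what you have is a correct and faithful plan of the paper's argument rather than a completed instance of it.
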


\vspace{-10pt}

\begin{center}
\hspace{-5pt}\includegraphics[width=4.5in]{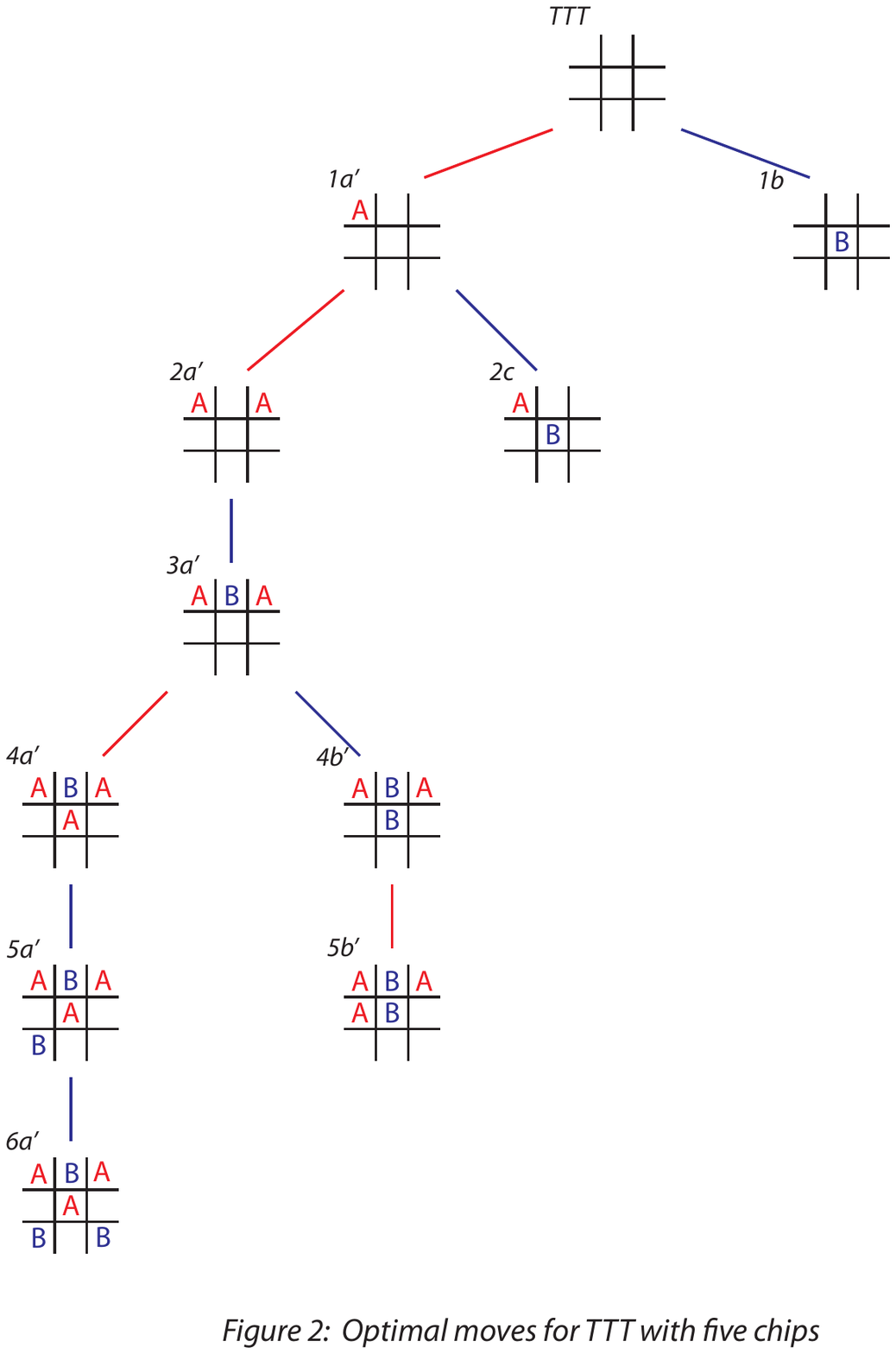}
\end{center}

\begin{proof}
Bob's move from 5a$'$ is a counterattack, Alice's first move and her move from 1a$'$ are optimal by Lemma \ref{Alice's first two moves}, and Bob's first move is optimal by Theorem \ref{stable TTT}.  We analyze the remaining six moves as follows.

\vspace{10 pt}

\noindent \emph{Bob's move from 4a$'$:}  If Bob does not block one of Alice's threats then he must win the following two moves as well.  However, if he gets three moves in a row then he can also win by blocking with the first one.

\vspace{10 pt}

\noindent \emph{Alice's move from 4b$'$:}  Wherever Alice moves, the resulting position is equivalent to $\E$.

\vspace{10 pt}

\noindent \emph{Alice's move from 3a$'$:}  Alice's move in the center creates a position equivalent to $\A \wedge (\A \wedge B^2)$.  If Alice plays anywhere else, then Bob moves in the center to create a position that is no better for Alice than $\E$, and hence worse than $\A \wedge B^2$.  Indeed, if Alice does not move in the center column, then Bob's move in the center creates a threat, and if Alice moves in the bottom then Bob's move in the center creates a position
\begin{center}
\includegraphics{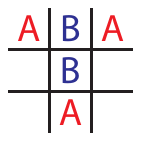}
\end{center}
that is equivalent to $\E$ by Example \ref{symmetric}.

\vspace{10 pt}

\noindent \emph{Bob's move from 3a$'$:}  Moving center creates a position equivalent to $B^2$, or $\E \wedge \B$.  If Bob moves anywhere else, then Alice can move center to reach a position in which she has a threat and Bob does not, which is worse for Bob than $\E$.

\vspace{10 pt}

\noindent \emph{Bob's move from 2a$'$:}  If Bob does not block then he must get the next move as well.  If he blocks on the next move, he may as well have blocked the first time, and if he doesn't block on his second move, then he has to get a third move in a row.  With three moves in a row, he can win after blocking with his first move.

\vspace{10 pt}

\noindent \emph{Bob's move from 1a$'$:}  If Bob plays anywhere other than center or bottom right then Alice can play in the center to reach a position that is no better for Bob than 3d,  Meanwhile, if Bob gets two moves in a row he cannot prevent Alice from winning if she gets the next two moves.  Therefore center is better than any other option except possible bottom right. 

Suppose Bob plays bottom right.  Then Alice can play top right and the best Bob can do is to block to reach a position
\begin{center}
\includegraphics{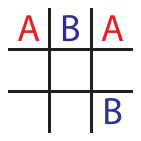}
\end{center}
that is equivalent to $\E$ (Example \ref{symmetric}).  Again this is worse for Bob than if he had played center.
\end{proof}

\subsection{Chip tables}  \label{TTT tables}

Having determined all of the optimal moves for bidding Tic-Tac-Toe, except for Alice's first move which we have reduced to two possibilities, we now compute the total number of chips that Alice needs to win from each position that can be reached by these optimal moves.  These computations are straightfoward applications of the recursion given by Theorem~\ref{discrete recursion} and the periodicity results in Theorem~\ref{periodicity for Alice} and \ref{periodicity for Bob}.  As a consequence of these computations, we find that Alice's optimal first move depends on the total number of chips, as follows.

\begin{theorem} \label{final TTT}
Center is an optimal first move for Alice if and only if the total number of chips is not five.  Corner is an optimal first move for Alice if and only if the total number of chips is 0, 1, 2, 3, 4, 5, 6, 7, 9, 11, 12, 13, 14, 19, 20, 22, or 26.
\end{theorem}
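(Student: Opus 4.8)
The plan is to reduce the theorem to a single comparison between two explicit chip tables. By Theorem~\ref{stable TTT}, Lemma~\ref{Alice's first two moves}, and the determination of every other optimal move in the preceding results, the only move in $\T$ that is not yet pinned down is Alice's opening, and it is one of exactly two candidates: the center, which leads to the position $\T_{1a}$ at the root of Figure~1, or an adjacent corner, which leads to the position $\T_{1a'}$ at the root of Figure~2. Write $C = \T_{1a}$ and $K = \T_{1a'}$ for these two positions, each played out according to its now-fixed optimal continuation. Since the value of a first move is measured by how few chips the resulting position requires of Alice, and since $\N \cup \N^*$ is totally ordered by $0 < 0^* < 1 < 1^* < \cdots$, center is an optimal first move exactly when $f(C,k) \leq f(K,k)$, corner is optimal exactly when $f(K,k) \leq f(C,k)$, and both are optimal precisely when $f(C,k) = f(K,k)$. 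The entire theorem follows once the two threshold functions $f(C,\cdot)$ and $f(K,\cdot)$ are computed and compared.

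First I would compute $f(C,k)$ and $f(K,k)$ for all $k$ by working backward from the end positions with the recursion of Theorem~\ref{discrete recursion}. Because the continuations are fixed, only the handful of positions lying along the optimal lines of Figures~1 and~2 occur, and each such position has already been identified, up to equivalence, with an iterated wedge sum of games equivalent to $\A$, $\B$, $A^m$, and $B^n$, whose thresholds are given in closed form in Example~\ref{simple tables}. Assembling these from the bottom up---being careful at each node with the floor rounding and with the three parity-and-tie-break cases of $\varepsilon$ in Theorem~\ref{discrete recursion}---produces chip tables for $C$ and $K$ in the style of Example~\ref{simple tables}.

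To turn this into a statement about all $k$, I would invoke periodicity. Since $\T$ is finite, every relevant value $R(\T_v)$ and $\Delta_v$ is a dyadic rational whose denominator divides $256$ (consistent with $R(\T) = 133/256$), so one may take $M = 256$, giving $m = 133$ and $\overline{m} = 123$. Theorems~\ref{periodicity for Alice} and~\ref{periodicity for Bob} then show that, in the stable regime, the win/loss pattern---and hence each of $f(C,\cdot)$ and $f(K,\cdot)$---repeats with period $m + \overline{m} = 256$ in the total chip count, while by Theorem~\ref{stable for many chips} the game is stable once $a+b$ exceeds some bound $n_0$. It therefore suffices to tabulate and compare $f(C,k)$ and $f(K,k)$ for $k$ up to a bound comfortably larger than both $26$ and $n_0$, after which the comparison is governed by the periodic tail. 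The tabulation is expected to show $f(C,5) > f(K,5)$, so that corner alone is optimal at five (recovering the exclusion in Theorem~\ref{stable TTT}); equality $f(C,k) = f(K,k)$ at exactly the remaining listed totals; and $f(C,k) < f(K,k)$ for every other $k$, in particular for all $k \geq 27$, where center is strictly optimal in keeping with stability (Theorem~\ref{above the threshold}).

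The main obstacle is not conceptual but the sheer bookkeeping of the two bottom-up computations: carrying the $\N^*$ arithmetic, the floors, and the $\varepsilon$ cases through many positions without error, and confirming the small-$k$ anomalies---above all why the total $k = 5$ is the unique count at which the center tree fails while its neighbors do not. The one genuinely delicate point is establishing completeness of the exceptional list: one must check enough initial values of $k$ that the exact period-$256$ periodicity guaranteed by the finiteness of $\T$ certifies that no further sporadic equality $f(C,k) = f(K,k)$ occurs beyond the checked range.
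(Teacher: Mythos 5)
Your overall route is the same as the paper's: the paper also reduces the theorem to computing two explicit chip tables, $f'(TTT,k)$ for the center opening and $f''(TTT,k)$ for the corner opening, via the recursion of Theorem~\ref{discrete recursion} applied to the fixed continuations in Figures~1 and~2, and then comparing the tables entry by entry, with the periodicity results handling all sufficiently large $k$. So the strategy is sound and matches the source.

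There is, however, one concrete error in your periodicity step that would sink the completeness argument for the exceptional list. You assert that both $f(C,\cdot)$ and $f(K,\cdot)$ repeat with the same increment $m = 133$ per period of $256$ chips. That is correct for the center tree, whose Richman value is $133/256$, but the corner-first subtree, viewed as a game in its own right, is a \emph{different} finite game with a strictly larger Richman value, and the paper's tables reflect this: $f'(TTT,256n+j) = 133n + c_j$ while $f''(TTT,256n+j) = 137n + c'_j$. This discrepancy is not a cosmetic detail --- it is the entire reason the list of chip totals at which corner is optimal is finite. If both tables advanced by the same amount per period, the difference $f''(TTT,k) - f'(TTT,k)$ would be periodic in $k$, every equality in the first period would recur in all later periods, and the finite exceptional list in the statement could not be certified (indeed it would be false under that assumption). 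The correct argument is that $f''(TTT,k) - f'(TTT,k)$ grows by $4$ per $256$ chips, so after checking one full period one knows the corner opening is strictly worse for all larger $k$. Your appeal to Theorem~\ref{above the threshold} gestures at the right phenomenon but does not substitute for applying the periodicity theorems to the corner subtree with \emph{its own} constants $m = 137$ and $\overline m = 119$.
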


\begin{proof}
We prove the theorem by explicitly computing the number of chips that Alice needs to win assuming a first move in the center or a first move in the corner, for all possible chip totals.  Let $f'(TTT,k)$ be the number of chips that Alice needs to win, assuming a first move in the center, and $f''(TTT,k)$ be the number of chips that Alice needs to win, assuming she makes the first move in the corner, if she wins the first bid.

\begin{center}
\small
\vskip .5cm \begin{tabular}{|r||*{12}{l|}}\hline 
$256n+$&+0&+1&+2&+3&+4&+5&+6&+7&+8&+9&+10&+11\\ \hline \hline 
0+&1&1&1*&2*&2*&3*&4&4&4*&5*&5*&6*\\ \hline
12+&7&7&8&8*&9&9&10&10*&11&11*&12*&12*\\ \hline
24+&13&13&14&14*&15&15*&16&17&17*&17*&18&18*\\ \hline
36+&19*&20&20*&20*&21*&21*&22&23&23*&24&24*&25\\ \hline 
48+&25*&26&26&27&27*&28*&28*&29&29*&30&30*&31*\\ \hline
60+&31*&32&33&33&34&34*&34*&35*&36&36*&37&37*\\ \hline
72+&37*&38*&39&40&40&40*&41&41*&42&42*&43&44\\ \hline
84+&44*&44*&45*&45*&46&46*&47*&48&48*&48*&49&50\\ \hline 
96+&50*&51&51*&52&53&53&53*&53*&54*&55&55*&56\\ \hline
108+&57&57&57*&58&59&59&59*&60*&60*&61*&62&62\\ \hline
120+&62*&63*&63*&64*&65&65&66&66*&67*&67*&68&69\\ \hline
132+&69&70&70*&70*&71&72&72&73&73*&73*&74*&75\\ \hline 
144+&75*&75*&76*&77&77*&78&79&79&79*&79*&80*&81\\ \hline
156+&81*&82&82*&83*&84&84&84*&85&86&86*&87&87\\ \hline
168+&88&88&88*&89*&90&90*&91&91*&92&92*&92*&93*\\ \hline
180+&94&95&95&95*&96&96*&97&98&98&98*&99*&99*\\ \hline 
192+&100*&101&101&102&102*&103&103*&104&104&105&105*&106*\\ \hline
204+&106*&107&107*&108&108*&109&109*&110*&111&111&112&112\\ \hline
216+&112*&113&114&114*&115&115&115*&116*&117&117*&118&118*\\ \hline
228+&119*&119*&120&120&121&121*&122&122*&123*&123*&124&124*\\  \hline 
240+&125*&125*&126&127&127&128&128*&128*&129&130&130&131\\ \hline
252+&131*&131*&132*&133&&&&&&&&\\ \hline 
\end{tabular}
\normalsize
\vskip .5cm
\emph{Figure 3: Critical thresholds if Alice moves in the center;  $f'(TTT,256n + \ ) = 133n + \ \ $.}
\end{center}

\noindent The entries in the interior of the preceding table are critical thresholds, while the position of the entry determines the total number of chips.  For instance, the last entry in the second row is 12*, which means that the critical threshold $f'(TTT, 256n+23)$ is $133n + 12^*$, for any nonnegative integer $n$.

\begin{center}
\small
\begin{tabular}{|r||*{12}{l|}}\hline 
$256n+$&+0&+1&+2&+3&+4&+5&+6&+7&+8&+9&+10&+11\\ \hline \hline 
0+&1&1&1*&2*&2*&3&4&4&5&5*&6&6*\\ \hline
12+&7&7&8&9&9*&9*&10*&10*&11&12&12*&13\\ \hline
24+&13*&13*&14&15&15*&16&16*&17*&18&18&18*&19*\\ \hline
36+&20&20*&21&21*&22&22*&23&23*&24&24*&25&26\\ \hline 
48+&26*&27&27&27*&28*&29&29*&30&31&31&31*&32*\\ \hline
60+&32*&33&34&34&35&35*&35*&36*&37&37&38&38*\\ \hline
72+&39&39*&40*&41&41&41*&42&43&43*&44&44*&45\\ \hline
84+&45*&46&46*&47&47*&48&48*&49*&50&50&50*&51*\\ \hline 
96+&52&52*&53&54&54*&54*&55&55*&56&57&57*&57*\\ \hline
108+&58*&58*&59&60&61&61&61*&62&62*&63&64&64\\ \hline
120+&65&65*&65*&66*&67&67&68&68*&69*&69*&70&71\\ \hline
132+&71&71*&72*&72*&73*&74&74*&75&75*&75*&76*&77*\\ \hline 
144+&78&78&79&79&79*&80*&81&81*&82&82&82*&84*\\ \hline
156+&84&84*&85&86&86*&86*&87&88&88*&89&89*&90\\ \hline
168+&90*&91&91*&92&92*&93&93*&94*&95&95*&95*&96\\ \hline
180+&97&97*&98&98*&99*&99*&100&101&101&101*&102*&102*\\ \hline 
192+&103*&104&104&105&105*&105*&106*&107&107*&108&109&109*\\ \hline
204+&109*&110&110*&111*&112&112*&113&113*&114&114*&115&115*\\ \hline
216+&116&116*&117&118&118*&118*&119&120&120*&121&121*&122*\\ \hline
228+&123&123&123*&124&124*&125*&126&126&127&127&127*&128*\\  \hline 
240+&129*&129*&130&130*&131&131*&132*&132*&133*&134&134&135\\ \hline
252+&135*&135*&136*&137&&&&&&&&\\ \hline 
\end{tabular}
\normalsize
\vskip .5cm
\emph{Figure 4: Critical thresholds if Alice moves in the corner; $f''(TTT,256n + \ ) = 137n + \ \ $.}
\end{center}

\noindent It is straightforward to compare the tables in Figures 3 and 4 to see that $f'(TTT, k)$ is greater than or equal to $f''(TTT, k)$ unless $k =5$, with equality only for $k =  0$, 1, 2, 3, 4, 6, 7, 9, 11, 12, 13, 14, 19, 20, 22, or 26, which proves Theorem~\ref{final TTT}. These tables are computed by working backwards from ending positions.  For completeness, we include tables of critical thresholds for the other positions in the trees of possible optimal moves in Figures 1 and 2.   Postions $4f$ and $5b'$ are equivalent to $\E$, position $4a$ is equivalent to $A^2$, positions $3e$, $5c$, $5d$, $5f$, $6a$, $6b$, $6c$, and $4b'$ are equivalent to $B^2$; see Example~\ref{simple tables} for tables of critical thresholds for these positions.  The critical thresholds for the remaining positions are as follows.

\vspace{10 pt}

\noindent Positions $5a$, $5b$, $5e$, $4d$, $4g$, and $5a'$ are all equivlent to $\A \wedge B^2$.

\vskip 10 pt \begin{tabular}{|r||*{8}{l|}}\hline 
$k=8n+$&0&1&2&3&4&5&6&7\\ \hline \hline
$f(\A \wedge B^2, k) = 3n + $&0*&0*&1&1*&2&2&2*&3\\ \hline 
\end{tabular}

\vspace{10 pt}

\noindent Positions $4b$, $4e$, $3c$ and $5a'$ are equivalent to $(\A \wedge B^2) \wedge \B$.  From each such position $v$, we have $f( TTT_v, 16n+\ ) = 11n + \ :$

\vskip 10 pt \begin{tabular}{|r||*{16}{l|}}\hline 
$16n+$&0&1&2&3&4&5&6&7\\ \hline \hline
0+&1&1*&2&3&3*&4&5&5*\\ \hline
8+&6*&7&7*&8*&9&9*&10*&11\\ \hline 
\end{tabular}

\vskip 10pt

\noindent Position $4c$ is equivalent to $(\A \wedge B^2) \wedge B^2$, and $f(TTT_{4c}, 16n + \ ) = 9n + \ :$

\vskip 10pt \begin{tabular}{|r||*{8}{l|}}\hline 
$16n+$&0&1&2&3&4&5&6&7\\ \hline \hline
0+&1&1&1*&2*&3&3&4&4*\\ \hline
8+&5*&5*&6&7&7*&7*&8*&9\\ \hline 
\end{tabular}

\vskip 10pt

\noindent Position $4h$ is equivalent to $B^3$.

\vskip 10pt \begin{tabular}{|r||*{8}{l|}}\hline 
$k=8n+$&0&1&2&3&4&5&6&7\\ \hline \hline
$f(B^3,k) = 7n+$&1&2&3&3*&4*&5*&6*&7\\ \hline 
\end{tabular}

\vskip 10pt

\noindent From position $3a$, we have $f(TTT_{3a}, 32n+ \ ) = 15n + \ :$

\vskip 10pt \begin{tabular}{|r||*{12}{l|}}\hline 
$32n+$&+0&+1&+2&+3&+4&+5&+6&+7&+8&+9&+10&+11\\ \hline \hline
0+&0*&1&1*&2&2&3&3*&3*&4&5&5&5*\\ \hline
12+&6&6*&7&7*&8&8*&9&9*&9*&10*&11&11\\ \hline
24+&11*&12*&12*&13&13*&14&14*&15&&&&\\ \hline 
\end{tabular}
\normalsize

\vskip 10pt

\noindent From position $3b$, we have $f(TTT_{3b}, 32n + \ ) = 9n + \ $:

\vskip 10pt \begin{tabular}{|r||*{12}{l|}}\hline 
$32n+$&+0&+1&+2&+3&+4&+5&+6&+7&+8&+9&+10&+11\\ \hline  \hline
$0+$&0*&0*&0*&1&1*&1*&2&2&2*&2*&3&3*\\ \hline
12+&3*&3*&4&4*&5&5&5&5*&6&6&6*&6*\\ \hline
24+&7&7&7*&8&8&8&8*&9&&&&\\ \hline 
\end{tabular}
\normalsize

\vskip 10pt

\noindent From position $3d$, we have $f(TTT_{3d}, 32n + \ ) = 11n + \ :$

\vskip 10pt \begin{tabular}{|r||*{12}{l|}}\hline 
$32n+$&+0&+1&+2&+3&+4&+5&+6&+7&+8&+9&+10&+11\\ \hline \hline
0+&0*&0*&1&1*&1*&2&2*&2*&3&3*&3*&4\\ \hline
12+&4*&4*&5&5*&6&6&6*&7&7&7*&8&8\\ \hline
24+&8*&9&9&9*&10&10&10*&11&&&&\\ \hline 
\end{tabular}

\vskip 10pt

\noindent Position $3f$ is equivalent to $(\A \wedge B^2) \wedge B^3$.

\vskip 10pt  \begin{tabular}{|r||*{8}{l|}}\hline 
$k=8n+$&0&1&2&3&4&5&6&7\\ \hline \hline
$f((\A \wedge B^2) \wedge B^3, k) = 5n +$ &1&1*&2&2*&3&3*&4*&5\\ \hline 
\end{tabular}

\vskip 10pt

\noindent From position $2a$, we have $f(TTT_{2a}, 64n + \ ) = 15n + \ $: \nopagebreak
\vskip 10pt \begin{tabular}{|r||*{12}{l|}}\hline 
$64n+$&+0&+1&+2&+3&+4&+5&+6&+7&+8&+9&+10&+11\\ \hline \hline
0+&0&0*&0*&1&1&1*&1*&1*&2&2*&2*&2*\\ \hline
12+&3&3&3*&3*&4&4&4*&4*&4*&5&5*&5*\\ \hline
24+&5*&6&6&6*&6*&7&7&7*&7*&8&8&8*\\ \hline
36+&8*&9&9&9&9*&10&10&10&10*&10*&11&11\\ \hline 
48+&11*&11*&12&12&12&12*&13&13&13&13*&13*&14 \\ \hline
60+&14&14*&14*&15&&&&&&&&\\ \hline 
\end{tabular}

\vskip 10pt

\noindent From position $2b$, we have $f(TTT_{2b}, 64n+ \ ) = 31n + \ $:
\vskip 10pt \begin{tabular}{|r||*{12}{l|}}\hline 
$64n+$&+0&+1&+2&+3&+4&+5&+6&+7&+8&+9&+10&+11\\ \hline \hline
0+&1&1&1*&2&2*&3&3*&3*&4*&5&5&6\\ \hline
12+&6*&6*&7&8&8*&8*&9&10&10&10*&11*&11*\\ \hline
24+&12&12*&13&13*&14&14&15&15*&16*&16*&17&17*\\ \hline
36+&18&18*&19&19&20&20*&20*&21*&22&22&22*&23*\\ \hline 
48+&24&24&24*&25*&25*&26&27&27&27*&28&28*&29\\ \hline
60+&29*&29*&30*&31&&&&&&&&\\ \hline 
\end{tabular}
\normalsize

\vskip 10pt

\noindent From position $2c$, we have $f(TTT_{2c}, 64n + \ ) = 35n + \ $:
\vskip 10pt \begin{tabular}{|r||*{12}{l|}}\hline 
$64n+$&+0&+1&+2&+3&+4&+5&+6&+7&+8&+9&+10&+11\\ \hline \hline
0+&1&1&1*&2*&3&3&4&4*&5&5*&6&6*\\ \hline
12+&7*&7*&8&9&9*&9*&10*&11&11*&12&12*&13\\ \hline
24+&14&14&14*&15*&16&16&17&17*&18*&18*&19&20\\ \hline
36+&20*&20*&21*&22&22*&23&23*&24&25&25&25*&26*\\ \hline
48+&27&27&28&28*&29&29*&30&30*&31*&31*&32&33\\ \hline
60+&33*&33*&34*&35&&&&&&&&\\ \hline 
\end{tabular}
\normalsize

\vskip 10pt

\noindent From position $2d$, we have $f(TTT_{2d}, 16n + \ ) = 13n + \ $:
\vskip 10pt \begin{tabular}{|r||*{8}{r|}}\hline 
$16n+$&0&1&2&3&4&5&6&7\\ \hline \hline
0+&1&2&2*&3*&4&5&6&6*\\ \hline
8+&7*&8*&9&10&10*&11*&12*&13\\ \hline 
\end{tabular}

\vskip 10pt

\noindent From position $1a$, we have $f(TTT_{1a}, 64n + \ ) = 23n + \ $: \nopagebreak
\vskip 10pt \begin{tabular}{|r||*{12}{l|}}\hline 
$64n+$&+0&+1&+2&+3&+4&+5&+6&+7&+8&+9&+10&+11\\ \hline \hline
0+&0*&1&1&1*&1*&2*&2*&2*&3&4&4&4*\\ \hline
12+&4*&4*&5*&6&6&6&7&7*&7*&7*&8*&8*\\ \hline
24+&9&9&9*&10&10*&10*&11&11*&12&12&12*&13\\ \hline
36+&13*&13*&14&14&15&15&15&15*&16*&16*&16*&17\\ \hline 
48+&18&18&18&18*&18*&19*&20&20&20&21&21&21*\\ \hline
60+&21*&22&22*&23&&&&&&&&\\ \hline 
\end{tabular}

\vskip 10pt

\noindent From position $1b$, we have $f(TTT_{1b}, 128n + \ ) = 87n + \ $:
\vskip 10pt \begin{tabular}{|r||*{12}{l|}}\hline 
$128n+$&+0&+1&+2&+3&+4&+5&+6&+7&+8&+9&+10&+11\\ \hline \hline
0+&1&1*&2&3&3*&4&5&5*&6&7&7*&8*\\ \hline
12+&9&9*&10&11&12&12*&13&13*&14*&15&16&16\\ \hline
24+&17&17*&18*&19*&19*&20&21&22&23&23*&23*&24*\\ \hline
36+&25*&26&27&27&28&28*&29*&30&30*&31&32&33\\ \hline 
48+&33*&34&34*&35*&36&37&37*&38&39&39*&40&41\\ \hline
60+&41*&42&43&43*&44*&45&45*&46*&47&47*&48*&49\\ \hline
72+&49*&50*&51&52&52*&53&53*&54*&55*&56&56*&57\\ \hline
84+&58&58*&59*&59*&60*&61&62&63&63&63*&64*&65*\\ \hline 
96+&66*&67&67&68&69&69*&70*&70*&71*&72&73&73*\\ \hline
108+&74&74*&75*&76*&77&77*&78&79&79*&80*&81&81*\\ \hline
120+&82*&83&83*&84*&85&85*&86*&87&&&&\\ \hline 
\end{tabular}

\vskip 10pt

\noindent From position $4a'$, we have $f(TTT_{4a'}, 16n + \ ) = 3n+ \ $:
\vskip 10pt \begin{tabular}{|r||*{8}{r|}}\hline 
$16n+$&0&1&2&3&4&5&6&7\\ \hline \hline
0+&0&0&0*&0*&1&1&1&1*\\ \hline
8+&1*&1*&2&2&2*&2*&2*&3\\ \hline 
\end{tabular}

\vskip 10pt

\noindent From position $3a'$, we have $f(TT_{3a'}, 32n + \ ) = 15n + \ $:
\vskip 10pt \begin{tabular}{|r||*{12}{l|}}\hline 
$32n+$&+0&+1&+2&+3&+4&+5&+6&+7&+8&+9&+10&+11\\ \hline \hline
0+&0*&0*&1*&2&2*&2*&3&4&4*&4*&5&5*\\ \hline
12+&6*&6*&7&7*&8&8&9&9*&10&10&10*&11*\\ \hline
24+&12&12&12*&13&14&14&14*&15&&&&\\ \hline 
\end{tabular}

\vskip 10pt

\noindent From position $2a'$, we have $f(TTT_{2a'}, 64n + \ ) = 15n + \ $: \nopagebreak
\vskip 10pt \begin{tabular}{|r||*{12}{l|}}\hline 
$64n+$&+0&+1&+2&+3&+4&+5&+6&+7&+8&+9&+10&+11\\ \hline \hline
0+&0&0&0*&1&1&1&1*&2&2&2&2*&2*\\ \hline
12+&3&3&3*&3*&4&4&4*&4*&5&5&5&5*\\ \hline
24+&6&6&6&6*&7&7&7&7*&7*&7*&8&8*\\ \hline
36+&8*&8*&9&9*&9*&9*&10&10&10*&10*&11&11\\ \hline 
48+&11*&11*&12&12&12*&12*&12*&13&13*&13*&13*&14\\ \hline
60+&14*&14*&14*&15&&&&&&&&\\ \hline 
\end{tabular}

\vskip 10pt

\noindent From position $1a'$, we have $f(TTT_{1a'}, 64n + \ ) = 25n + \ $: \nopagebreak
\vskip 10pt \begin{tabular}{|r||*{12}{l|}}\hline 
$64n+$&+0&+1&+2&+3&+4&+5&+6&+7&+8&+9&+10&+11\\ \hline \hline
0+&0*&0*&1&1*&2&2&3&3&3*&3*&4*&4*\\ \hline
12+&5&5&6&6*&6*&6*&7*&8&8&8*&8*&9*\\ \hline
24+&10&10&10&11&11*&11*&12&12*&13&13&13*&14*\\ \hline
36+&14*&14*&15&16&16&16*&16*&17&18&18&18&18*\\ \hline 
48+&19*&19*&20&20&21&21&21*&21*&22*&22*&23&23*\\ \hline
60+&24&24&24*&25&&&&&&&&\\ \hline 
\end{tabular}

\vskip 10pt

\noindent This concludes our analysis of bidding Tic-Tac-Toe. \end{proof}

\section{Appendix: other tie-breaking methods}

Throughout this paper we have used the tie-breaking method introduced in Section \ref{tie-breaking section}, in which the player who holds the tie-breaking chip can either give the chip to his opponent to win a tie, or keep the tie-breaking chip and lose the tie.  We chose this method because it seemed simple and natural, with the tie-breaking advantage passing back and forth between the players just like the ordinary bidding chips, and because it has the following convenient properties.
\begin{enumerate}
\item It is always advantageous to have the tie-breaking chip (Lemma \ref{* is an advantage}).
\item The tie-breaking chip is worth less than an ordinary chip (Lemma \ref{1>*}).
\end{enumerate}
Other natural tie-breaking methods are possible, and here we briefly consider a few alternatives.

\vspace{10 pt}

\noindent \textbf{Loser's Ball (the $\epsilon$-chip)}.  We could make a rule that the player who loses one bid wins any tie on the next bid.  One way to think about this tie-breaking method is by introducing an \emph{$\epsilon$-chip} whose value is strictly between zero and one.  The player who holds the $\epsilon$-chip is required to bid it, so the bids are never tied.  Whichever player loses the bid takes all of the chips that were bid, and hence has the $\epsilon$-chip for the next round. This method has the mildly unpleasant feature that holding the $\epsilon$-chip is usually but not always an advantage.  For instance, in the game where the second player to move wins, if both players start with an equal number of chips then the player who starts without the $\epsilon$-chip wins.  In particular, the possible chip counts in $\N \cup \N + \epsilon$ with respect to this tie-breaking method are not totally ordered.

\vspace{10 pt}

\noindent \textbf{Make-it Take-it (the $-\epsilon$-chip).} A better idea than Loser's Ball is to make a rule that the player who wins one bid also wins any ties on the next bid.  This method is satisfying, since it penalizes players for losing bids and often leads to taunting.  One way to think about this tie-breaking method is by introducing a \emph{$-\epsilon$ chip} whose value is strictly between zero and minus one.  The player who holds the $-\epsilon$-chip is required to bid it, so the bids are never tied.  Whichever player loses the bid takes the $-\epsilon$-chip for the next round, and hence loses any tie.  Arguments similar to those given in Section~\ref{general theory} show that it is always an advantage \emph{not} to have the $-\epsilon$-chip and that it is always a good idea to accept the $-\epsilon$-chip from your opponent together with an ordinary chip.  Therefore, the possible chip counts in $\N \cup \N - \epsilon$ are totally ordered, and results analogous to those in Section~\ref{general theory} go through without major changes, except that the analogue of the recursion in Theorem~\ref{discrete recursion} is given by
\[
f(G_v,k) = \left \lfloor \frac{ | f_A(G_v, k) | +  | f_B(G_{v},k) | }{ 2 } \right \rfloor + \delta,
\]
where 
\[
\delta = \left \{ \begin{array}{ll} 0 & \mbox{ if } |f_A(G_v,k)| + |f_B(G_{v},k)| \mbox{ is even, and } f_B(G_v, k) \in \N. \\
			 		    -\epsilon & \mbox { if } |f_A(G_v,k)| + |f_B(G_{v},k)| \mbox{ is even, and } f_B(G_v, k) \in \N - \epsilon. \\
					  1-\epsilon & \mbox{ if } |f_A(G_v,k)| + |f_B(G_{v},k)| \mbox{ is odd, and } f_A(G_v, k) \in \N. \\
				 		      0  & \mbox{ if } |f_A(G_v,k)| + |f_B(G_{v},k)| \mbox{ is odd, and } f_A(G_v, k) \in \N - \epsilon. \\
								    \end{array} \right.
\]
The main disadvantage to the Make-it Take-it tie-breaking method seems to be that this description involves a lot of minus signs.

\vspace{10 pt}

\noindent \textbf{Ladies First.}  Suppose Alice wins all ties.  The longer Alice and Bob play, the more the effects of Alice's advantage accumulate, so we should expect that Bob will have trouble if the game goes too long.

Let $G$ be the game that Alice wins if she wins the first move, or both of the next two moves, or all of the next three moves, or the $i$-th move for all $n(n-1)/ 2 < i \leq n(n+1) / 2$, for any $n$.  Then Bob cannot win, but we can hope to prolong the game into an infinite draw.  If $G$ is truncated after $n(n+1)/2$ moves, then the critical threshold is
\[
R(G[n(n+1)/2]) = \prod_{j = 1}^n \big(1 - \frac{1}{2^j}\big).
\]
In particular, the critical threshold for Richman play, with real-valued bidding is given by the infinite product $R(G) = \prod_{j > 0} (1 - 1/2^j)$, which converges to a positive number.  However, for discrete bidding with Ladies First tie-breaking, Alice wins no matter what the chip count.  Alice can just bid zero indefinitely.  Bob will have to give her a chip sometime between the moves numbered $n(n-1)/2$ and $n(n+1)/2$ for every $n$, until he runs out of chips.  When Bob runs out of chips, Alice keeps bidding zero and winning all ties, so she eventually wins.  In particular, Theorem~\ref{below the threshold} does not extend to locally finite games with Ladies First tie-breaking.

\bibliography{math}

\newcommand{\etalchar}[1]{$^{#1}$}
\providecommand{\bysame}{\leavevmode\hbox to3em{\hrulefill}\thinspace}
\providecommand{\MR}{\relax\ifhmode\unskip\space\fi MR }
\providecommand{\MRhref}[2]{%
  \href{http://www.ams.org/mathscinet-getitem?mr=#1}{#2}
}
\providecommand{\href}[2]{#2}
\begin{thebibliography}{PSSW07}

\bibitem[Bea08a]{Beasley08}
J.~Beasley, \emph{Bidding {C}hess}, Variant Chess \textbf{7} (2008), no.~57,
  42--44.

\bibitem[Bea08b]{Beasley08b}
\bysame, \emph{Isolated pawns}, Variant Chess \textbf{7} (2008), no.~58, 71.

\bibitem[Ber96]{Berlekamp96}
E.~Berlekamp, \emph{The economist's view of combinatorial games}, Games of no
  chance (Berkeley, CA, 1994), Math. Sci. Res. Inst. Publ., vol.~29, Cambridge
  Univ. Press, Cambridge, 1996, pp.~365--405.

\bibitem[Con76]{Conway76}
J.~H. Conway, \emph{On numbers and games}, Academic Press [Harcourt Brace
  Jovanovich Publishers], London, 1976, London Mathematical Society Monographs,
  No. 6.

\bibitem[Hwa06]{Hwa06}
T.~Hwa, personal communication by email, October 26, 2006.

\bibitem[LLP{\etalchar{+}}99]{LLPSU}
A.~Lazarus, D.~Loeb, J.~Propp, W.~Stromquist, and D.~Ullman,
  \emph{Combinatorial games under auction play}, Games Econom. Behav.
  \textbf{27} (1999), no.~2, 229--264.

\bibitem[LLPU96]{LLPU}
A.~Lazarus, D.~Loeb, J.~Propp, and D.~Ullman, \emph{Richman games}, Games of no
  chance (Berkeley, CA, 1994), Math. Sci. Res. Inst. Publ., vol.~29, Cambridge
  Univ. Press, Cambridge, 1996, pp.~439--449.

\bibitem[PR08]{biddinghexai}
S.~Payne and E.~Robeva, \emph{Artificial intelligence for {B}idding {H}ex}, To
  appear in Games of no chance. arXiv:0812.3677v1, 2008.

\bibitem[PSSW07]{PSSW07}
Y.~Peres, O.~Schramm, S.~Sheffield, and D.~Wilson, \emph{Random-turn hex and
  other selection games}, Amer. Math. Monthly \textbf{114} (2007), no.~5,
  373--387.

\end{thebibliography}
\bibliographystyle{amsalpha}

\end{document}